\documentclass[a4paper,11pt,draft]{article}
\usepackage[utf8]{inputenc}
\usepackage{makeidx}
\usepackage[english]{babel}
\usepackage{amsthm}
\usepackage{amsmath}
\usepackage{latexsym}
\usepackage{amsfonts}
\usepackage{amssymb}
\usepackage{txfonts}
\usepackage{pxfonts}
\usepackage{enumerate}
\usepackage[all]{xy}
\usepackage{color}
\usepackage[notref,notcite]{showkeys}

\setlength{\textwidth}{15.1cm}
\setlength{\evensidemargin}{0mm} \setlength{\oddsidemargin}{0mm}

\theoremstyle{plain}
\newtheorem{theorem}{Theorem}[section]
\theoremstyle{plain}
\newtheorem{prop}[theorem]{Proposition}
\theoremstyle{plain}
\newtheorem{coro}[theorem]{Corollary}
\theoremstyle{plain}
\newtheorem{lemma}[theorem]{Lemma}
\theoremstyle{definition}
\newtheorem{defi}[theorem]{Definition}
\theoremstyle{definition}

\theoremstyle{definition}
\newtheorem{example}[theorem]{Example}
\theoremstyle{remark}
\newtheorem{remark}[theorem]{Remark}
\theoremstyle{definition}
\newtheorem*{question}{Question}

\newcommand{\sbz}[1]{\Sigma B\mathbb{Z}/#1}
\newcommand{\bz}[1]{B\mathbb{Z}/#1}
\newcommand{\z}[1]{\mathbb{Z} #1}

\newcommand{\map}{\mbox{map}}

\newcommand{\Q}{\mathbb{Q}}
\newcommand{\Hom}{\operatorname{Hom}}
\newcommand{\p}[2]{#1^{\wedge}_{#2}}
\newcommand{\mapp}{\mbox{map}_{\ast}}

\newcommand{\Am}{B_m}
\newcommand{\An}[1]{B_{#1}}
\newcommand{\Amm}[1]{B_{m_{#1}}}

\newcommand{\SAm}{\Sigma B_m}
\newcommand{\SAn}[1]{\Sigma {B_{#1}}}
\newcommand{\Ima}{\operatorname{Im}}
\newcommand{\Ext}{\operatorname{Ext}}

\newcommand{\F}{\mathcal{F}}

\newcommand{\Inn}{\operatorname{Inn}}

\newcommand{\Lin}{\mathcal{L}}
\newcommand{\Linp}{B\mathcal{F}}
\newcommand{\Mor}{\operatorname{Mor}}
\newcommand{\Syl}{\operatorname{Syl}}

\newcommand{\OF}{\mathcal{O(F)}}
\newcommand{\OFc}{\mathcal{O}^{c}(\mathcal{F})}

\newcommand{\Rep}{\operatorname{Rep}}
\newcommand{\Topo}{\operatorname{Top}}
\newcommand{\hocolim}{\operatorname{hocolim}}

\newcommand{\Qp}{\hat{\Q}_p}

\title{Cellular approximations of $p$-local compact groups}
\author{Nat\`alia Castellana\footnote{The three authors are partially
supported by FEDER-MEC grant MTM2010-20692 and by the grant UNAB10-4E-378 co-funded by FEDER.}, Ram\'on Flores\footnote{The second author is partially supported by FEDER-MEC grant MTM2016-76453-C2-1-P.} and Alberto Gavira-Romero\footnote{The third author is partially supported by Proxecto Emerxente da Xunta, EM 2013/016.}}

\date{}

\makeindex

\begin{document}

\maketitle

\begin{abstract}

Let $A$ be the classifying space of an abelian $p$-torsion group. We compute $A$-cellular approximations (in the sense of Chach\'olski and Farjoun) of classifying spaces of $p$-local compact groups, with special emphasis in the cases which arise from honest compact Lie groups.

\end{abstract}

\section{Introduction}
\label{section1}

In 1949, J.H.C. Whitehead introduced the concept of CW-complex as a method to construct every topological space by means of spheres. This can be considered one of the most influential notions ever defined in Homotopy Theory, and in particular it allowed to describe the structure of the spaces making use of induction over the cellular skeleta. Following this philosophy, Farjoun and Chach\'olski published a series of papers in the nineties (see \cite{MR1392221} and \cite{MR1408539}, for example) in which they proposed to generalize the concept of cellular approximation. More precisely, given spaces $A$ and $X$, they say that a space is $A$-cellular if $X$ can be built from $X$ by means of (pointed) homotopy colimits. Roughly speaking, this means that $X$ is $A$-cellular if it can be constructed using copies of $A$ as building bricks. Moreover, they also define the $A$-cellular approximation $CW_AX$ (see Proposition \ref{p:CW def} below) of a space $X$ as the closer $A$-cellular space to $X$ in a certain precise sense. Taking $A=S^0$, all these concepts specialize to classical Whitehead cellular notions, and cellular approximations generalize the theory of CW-approximations.

However, there is a crucial difference between CW-approximation and cellular approximation (also called cellularization in the literature). For every space $X$, the CW-approximation preserves the homotopy of $X$, while for a general $A$ this does not happen for $CW_AX$. This feature of the functors, that could be perceived at first glance as a disadvantage, allowed to interpret the cellularization as a tool to identify the part of the structure of the spaces that depend on the space $A$, and ultimately to classify spaces in terms of cellularity classes. This point of view is related to Hopkins-Smith classification of thick subcategories of finite spectra \cite{Hopkinsmith}, and has been recently adopted in many different categories, as for example groups \cite{MR2995665}, chain complexes \cite{MR2581213}, ring spectra \cite{MR2200850}, etc. In each case, an appropriate definition of limit is used.

In the unstable homotopy category a complete classification should not be expected, because this would imply a classification of the ideals in the homotopy groups of the spheres. However, when the interest is mod $p$ homotopy theory, much can be said about the $A$-cellular structure of spaces if $A$ is a mod $p$ meaningful space. For example, when $A$ is a Moore space $M(\mathbb{Z}/p,n)$, Rodr\'{i}guez-Scherer \cite{MR1851263} and Chach\'olski \cite{MR1408539} have described with precision the structure of the $A$-cellular spaces and highlighted the close relationship between this homotopy approximation and the corresponding $\pi_1A$-cellularization at the level of groups. From a somewhat different but related point of view, Chach\'olski-Farjoun-Flores-Scherer \cite{ChaFarFloSche} recently identified cellular approximations of nilpotent Postnikov pieces, in a paper that was originally intended to understand all the possible approximations of classiying spaces of finite $p$-groups.

Let $A$ be the classifying space of an abelian $p$-torsion group. The present paper is part of a long-term program designed to understand the $A$-cellular structure of classifying spaces of groups and their mod $p$ homotopical analogues. The possibility of such descriptions was suggested by the deep understanding of the structure of $\textrm{map}_*(B\mathbb{Z}/p,X)$ that, for a wide range of spaces $X$, followed Miller's resolution of the Sullivan conjecture \cite{MR750716} and subsequent work of Lannes \cite{MR1179079} and others. This kind of information and knowledge of the underlying group structure leads to descriptions of these cellular approximations for finite groups \cite{MR2823972}, Eilenberg-MacLane spaces \cite{MR2262843}, infinite loop spaces \cite{CellInfEsp} and compact Lie groups \cite{CASTFLOR}.

The main goal of our paper has been to improve the mentioned results about approximation of compact Lie groups, as well as to understand their cellular structure in the most general framework of $p$-local compact groups. In a series of papers (\cite{MR1992826}, \cite{MR2222502}, \cite{MR2302494}) Broto-Levi-Oliver have defined the ``correct" mod $p$ homotopical generalizations of classifying spaces of finite groups and compact Lie groups, called respectively $p$-local finite groups and $p$-local compact groups.  As the concept of cellular approximation is essentially of a homotopic nature, it was a natural question to translate and/or generalize our results concerning $CW_ABG$ to this more general setting. In this sense, our main result for $p$-local compact groups has been the following (the case of $p$-local finite groups is treated in a separate paper \cite{CellFusSys}):

\vspace{.5cm}
\textbf{Theorem} (\ref{p:BS cell CW A Xp = Xp} and \ref{c:BS cell CW A X fib rac}). Let $(S,\F)$ be a $p$-local compact group with $H_2(\Linp;\z{})$ finite. Let $B_m=B\mathbb{Z}/{p^{\infty}}\times B\mathbb{Z}/{p^m}$.

If $BS$ is $B_m$-cellular then the augmentation map $c_{\Linp} \colon CW_{B}(\Linp) \rightarrow \Linp$ is a mod $p$-equivalence. Moreover, there exists $n_{\mathcal F}$ such that $c_{\Linp}$ is a mod $p$-equivalence for all $n\geq n_{\mathcal F}$.

If $\Linp$ is nilpotent,
  $$ CW_{B_m}(\Linp) \rightarrow \Linp \rightarrow (\Linp)_{\Q}$$  is a homotopy fibration for all $m \geq n_{\mathcal F}$.

\vspace{.5cm}

A key tool to understand the result is Notbohm's kernel of a map from a classifying space (see Definition \ref{d:kerf} below, and the original reference \cite{MR1286829} for a thorough account). This group is defined in a homotopical way and play in our context the role of the minimal strongly closed subgroup that contains the $p^m$-torsion in the finite case.


The knowledge of the role of the Notbohm kernel allows to describe also the $B\mathbb{Z}/p^m$-cellular structure of $BG$, for $G$ a compact connect Lie group and for \emph{every} $m>0$. Note that in this way we are able to isolate the homotopy structure of $\p{BG}{p}$ that does \emph{not} depend on $B\mathbb{Z}/{p^{\infty}}$, and in particular we answer the final question of \cite{CASTFLOR} for these groups:

\vspace{.5cm}

 \textbf{Theorem} (\ref{t:CW Bzpm BG}).
  Let $m \geq 0$, and $G$ be a compact connected Lie group. Then there exists a compact Lie group $H$ and a map $\pi \colon \p{BG}{p}  \rightarrow \p{BH}{p}$ whose homotopy fibre is
  $\p{B\Gamma}{p}$, where $\Gamma$ is a compact Lie group, such that $CW_{\bz{p^m}}(\p{BG}{p})$ is homotopy equivalent to the homotopy fibre of the rationalization
  $\p{B\Gamma}{p} \rightarrow (\p{B\Gamma}{p})_{\Q}$.

\vspace{.5cm}

It is noteworthy that the classification of the simple 1-connected simple Lie groups allow to identify the group $H$ for any choice of the connected group $G.$

 Recall that for obtaining the complete description  of $CW_{B\mathbb{Z}/p}BG$ for $G$ finite, it was necessary to undertake the task of classifying the strongly closed subgroups of the finite groups \cite{MR2538017}. In the compact case, when $G$ is moreover simple, Notbohm was able to complete the corresponding classification, and that allowed us to be more precise in this case:

\vspace{.5cm}

 \textbf{Proposition} (\ref{p:G simple}).
  Let $G$ be a compact $1$-connected simple Lie group,  $p$ be a prime such that $p \mid |W_G|$. Then for all $m \geq 1$, the $\bz{p^m}$-cellularization of $\p{BG}{p}$ is
  equivalent to the homotopy fibre of the rationalization $\p{BG}{p} \rightarrow (\p{BG}{p})_{\Q}$, unless $G = S^3$ and $p^m = 2$. In this case $CW_{B\mathbb{Z}/2}BS^3=B\mathbb{Z}/2$.

\vspace{.5cm}

 Note that the case in which the prime $p$ does not divide the order of the Weyl group was already solved in \cite{CASTFLOR}.

 The paper is structured as follows: Section \ref{section2} introduces cellular spaces and $p$-local compact groups, with special emphasis in the notion of Notbohm kernel; in Section \ref{section3} we study the cellular structure of discrete $p$-toral groups, which play the role of the Sylow subgroups in the setting of $p$-local compact groups; in Section \ref{section4} the main general results about cellularization of $p$-local compact groups are proved; Section \ref{section5} contains the consequences about the cellular structure of classifying spaces of compact Lie groups, and we finished in last section with the sharpened results for compact connected Lie groups and other examples. In particular, we compute the cellular approximation of $BDI(4)$, an exotic example of $p$-local compact group.

\medskip

\noindent{\textbf {Notation:}} Given $m\geq 1$, the classifying space $B\z /p^{\infty}\times B\z /p^m$ will be denoted $B_m$. By convention we will assume $B_0=B\z /p^{\infty}$.




\section{Preliminaries}
\label{section2}

 In this section we collect the main definitions about cellular spaces and $p$-local compact groups that will be intensively used in the remaining of the paper. As these topics have been previously developed with detail (see references below), we only intend to offer a brief account to the theory, and we refer the reader to the  sources below for a more thorough account to the subject.

\subsection{Cellular spaces}

The main available references about cellular spaces are the book \cite{MR1392221} of Farjoun and the paper \cite{MR1408539} of Chach\'olski.
These authors introduced the concept of cellular space in order to generalize CW-complexes to a more general setting called $A$-homotopy theory. Let then $A$ and $X$ be pointed spaces.

\begin{defi}
A space $X$ is said $A$-\emph{cellular} if it can be constructed out of $X$ by means of (iterated) pointed homotopy colimits. Moreover, a class of pointed spaces is called cellular if it is closed under weak equivalences and homotopy colimits.
\end{defi}

 In this sense, the $A$-cellular spaces are the elements of the smallest closed class $\mathcal{C}(A)$ that contains $A$, and it is said that $A$ \emph{generates} $\mathcal{C}(A)$. If $X\in \mathcal{C}(A)$, the notation $A<<X$ is usual, and this is called a \emph{cellular inequality}. A good compilation of cellular inequalities can be found in \cite[Section 3]{ChaFarFloSche}.


The basic criterion to recognize $A$-cellular spaces is the following result.

\begin{theorem}[{\cite[Section 5]{MR1408539}}]
A pointed space $X$ is $A$-cellular if and only if for any map $f:Y\rightarrow Z$ such that the induced map $\mapp (A,Y)\rightarrow \mapp (A,Z)$ is a weak equivalence, it is true that the induced map $\mapp (X,Y)\rightarrow \mapp (X,Z)$ is also a weak equivalence.
\end{theorem}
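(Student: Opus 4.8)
The plan is to prove the two implications separately. The forward direction is a soft ``closed class'' argument; the reverse direction carries the real content and is obtained from the cellularization functor of Proposition~\ref{p:CW def}.

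For the forward implication, call a map $f\colon Y\to Z$ an \emph{$A$-equivalence} if the induced map $\mapp(A,Y)\to\mapp(A,Z)$ is a weak equivalence, and let $\mathcal{D}$ be the class of pointed spaces $W$ for which $\mapp(W,-)$ sends every $A$-equivalence to a weak equivalence. The first step is to check that $\mathcal{D}$ is a closed class containing $A$: membership of $A$ is immediate from the definition; closure under weak equivalences holds because $\mapp(-,Y)$ is homotopy invariant in the first variable; and closure under pointed homotopy colimits follows from the natural equivalence $\mapp(\hocolim_I W_i,-)\simeq\operatorname{holim}_I\mapp(W_i,-)$ together with the fact that homotopy limits preserve objectwise weak equivalences. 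Since the $A$-cellular spaces are by definition the smallest closed class containing $A$, every $A$-cellular space lies in $\mathcal{D}$, which is exactly the forward implication.

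For the reverse implication, assume $\mapp(X,-)$ carries $A$-equivalences to weak equivalences, and consider the cellularization map $c_X\colon CW_A X\to X$ of Proposition~\ref{p:CW def}, whose defining features are that $CW_A X$ is $A$-cellular and that $c_X$ is itself an $A$-equivalence. Feeding the $A$-equivalence $c_X$ into the hypothesis on $X$ shows that $\mapp(X,CW_A X)\to\mapp(X,X)$ is a weak equivalence; being surjective on $\pi_0$, it produces a map $s\colon X\to CW_A X$ with $c_X\circ s\simeq\mathrm{id}_X$. Since $CW_A X$ is $A$-cellular, the forward implication just established applies to it, so $\mapp(CW_A X,CW_A X)\to\mapp(CW_A X,X)$ is a weak equivalence, in particular injective on $\pi_0$; from $c_X\circ(s\circ c_X)\simeq(c_X\circ s)\circ c_X\simeq c_X\simeq c_X\circ\mathrm{id}_{CW_A X}$ this injectivity forces $s\circ c_X\simeq\mathrm{id}_{CW_A X}$. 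Hence $c_X$ is a homotopy equivalence, $X$ is weakly equivalent to the $A$-cellular space $CW_A X$, and closure of the $A$-cellular class under weak equivalences gives that $X$ is $A$-cellular.

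The single substantial ingredient is the existence of $CW_A X$ with the two properties invoked above, which I would take as given from Proposition~\ref{p:CW def} (ultimately Chach\'olski's cofibration-sequence construction involving $\Sigma A$-nullification); granting that, the rest is purely formal. The points deserving care are homotopy-theoretic bookkeeping: working with suitable (co)fibrant replacements so that the pointed mapping spaces $\mapp(-,-)$ are homotopically meaningful, and verifying in the ambient model that $\mapp(-,Y)$ turns pointed homotopy colimits into homotopy limits of mapping spaces. I expect no conceptual obstacle beyond correctly citing Proposition~\ref{p:CW def}.
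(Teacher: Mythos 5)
The paper does not prove this statement; it is quoted directly from Chach\'olski \cite[Section 5]{MR1408539}, so there is no internal proof to compare against. Your argument is correct and is essentially the standard one from the cited sources: the forward direction by checking that the spaces $W$ for which $\mapp(W,-)$ preserves $A$-equivalences form a closed class containing $A$, and the converse by feeding the $A$-equivalence $CW_AX\to X$ of Proposition \ref{p:CW def} back into the hypothesis to produce a section and then a two-sided homotopy inverse, so that $X$ is weakly equivalent to the $A$-cellular space $CW_AX$.
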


E. Farjoun also showed the existence of a functor such that, given a pointed space $A$, turns every pointed space $X$ to an $A$-cellular space, and which in fact offers the best possible approximation for the point of view of maps coming out of $A$.

\begin{prop}[{\cite[Section 2.E]{MR1392221}}]
\label{p:CW def}
There exists an augmented  idempotent functor $CW_A:\mathbf{Spaces}_*\rightarrow \mathbf{Spaces}_*$ such that $CW_AX$ is $A$-cellular for every $X$. The functor $CW_A$ preserves weak equivalences and for every $X\in \mathbf{Spaces}_*$, the augmentation induces a map  $\mapp (A,CW_AX)\rightarrow \mapp (A,X)$ which is a weak equivalence. Moreover a space $X$ is $A$-cellular if and only if the augmentation $CW_AX\rightarrow X$ is itself a weak equivalence.
\end{prop}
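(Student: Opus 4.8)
The statement is classical, due to Farjoun (\cite[Section 2.E]{MR1392221}), so the plan is to organise the standard argument: build $CW_AX$ by a transfinite ``co-small-object'' construction and then extract every listed property formally from the recognition theorem above. Working with CW models so that $\mapp(A,-)$ is homotopy invariant, call a map $f\colon Y\to Z$ an \emph{$A$-equivalence} if $\mapp(A,Y)\to\mapp(A,Z)$ is a weak equivalence; such maps satisfy two-out-of-three and absorb weak equivalences on either side. First I would isolate the purely formal consequence of the recognition theorem that makes everything else cheap: \emph{an $A$-equivalence between $A$-cellular spaces is a weak equivalence}, and, more generally, any two $A$-equivalences $g\colon W\to X$ and $g'\colon W'\to X$ with $W,W'$ $A$-cellular are weakly equivalent over $X$. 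The proof is a retraction argument: since $W'$ is $A$-cellular and $g$ is an $A$-equivalence, $\mapp(W',W)\to\mapp(W',X)$ is a weak equivalence, so $g'$ lifts to some $\varphi\colon W'\to W$ with $g\varphi\simeq g'$; symmetrically one obtains $\psi\colon W\to W'$ with $g'\psi\simeq g$; and since $\mapp(W,W)\to\mapp(W,X)$ is a weak equivalence, hence a bijection on $\pi_0$, the two maps $\varphi\psi$ and $\mathrm{id}_W$ --- both lying over $g$ --- are homotopic, and likewise $\psi\varphi\simeq\mathrm{id}_{W'}$. In particular, once we know $CW_AX$ exists (as an $A$-cellular space with an $A$-equivalence to $X$) it is unique up to a canonical weak equivalence and satisfies the lifting property $\mapp(W,CW_AX)\xrightarrow{\sim}\mapp(W,X)$ for every $A$-cellular $W$.

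For existence I would run the usual iteration. Put $W_0:=\hocolim_{\mapp(A,X)}A$, the homotopy colimit of the constant diagram with value $A$ indexed by the mapping space, together with its evaluation map $W_0\to X$; it is $A$-cellular by construction. Given $W_\alpha\to X$ with homotopy fibre $F_\alpha$, set $W_{\alpha+1}:=\hocofib\bigl(\hocolim_{\mapp(A,F_\alpha)}A\to W_\alpha\bigr)$, where the map is evaluation followed by the fibre inclusion $F_\alpha\hookrightarrow W_\alpha$. That composite is canonically nullhomotopic in $X$, so $W_\alpha\to X$ extends to $W_{\alpha+1}\to X$; and $W_{\alpha+1}$ is again $A$-cellular, being a homotopy pushout of $A$-cellular spaces, the point itself being $A$-cellular as the empty homotopy colimit. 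At limit ordinals take homotopy colimits. Each $W_\alpha$ is $A$-cellular, carries a compatible map to $X$, and the whole tower is functorial in $X$.

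The single place where genuine work is needed is \emph{convergence}: I must exhibit a regular cardinal $\lambda$ for which $W_\lambda\to X$ is an $A$-equivalence, equivalently for which its homotopy fibre $F_\lambda$ has $\mapp(A,F_\lambda)\simeq\ast$. Choosing $\lambda$ so that $A$ is $\lambda$-small makes $\mapp(A,-)$ commute with $\lambda$-filtered homotopy colimits, so $\mapp(A,F_\lambda)\simeq\hocolim_{\alpha<\lambda}\mapp(A,F_\alpha)$; the successor step was arranged precisely so that each homotopy class of each term of this telescope is killed at a later stage, whence the colimit is weakly contractible. This Bousfield-style cardinality bookkeeping --- pinning down $\lambda$ and checking that homotopy fibres and $\mapp(A,-)$ behave correctly under the relevant colimits --- is the hard part; I expect it to be the main obstacle, and it is exactly what is carried out in detail in \cite{MR1392221}. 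Setting $CW_AX:=W_\lambda$ completes existence.

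It then remains to read off the listed properties. Functoriality and the augmentation $c_X\colon CW_AX\to X$ --- an $A$-equivalence with $A$-cellular source --- are part of the construction. Idempotence holds because $CW_A(CW_AX)\to CW_AX$ is an $A$-equivalence between $A$-cellular spaces, hence a weak equivalence by the formal lemma of the first paragraph. Preservation of weak equivalences: a weak equivalence $X\to X'$ makes $CW_AX\to CW_AX'$ an $A$-equivalence (by two-out-of-three, since $CW_AX\to X\to X'$ is one) between $A$-cellular spaces, again a weak equivalence. The map $\mapp(A,CW_AX)\to\mapp(A,X)$ is a weak equivalence by the very definition of $A$-equivalence. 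Finally, for the ``moreover'', if $X$ is $A$-cellular then $c_X$ is an $A$-equivalence between $A$-cellular spaces, hence a weak equivalence; the converse is immediate, since then $X\simeq CW_AX$ is $A$-cellular.
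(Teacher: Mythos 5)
The paper offers no proof of this proposition---it is quoted verbatim from Farjoun \cite[2.E]{MR1392221}---and your proposal is a faithful reconstruction of that standard argument: the $A$-Whitehead lemma (an $A$-equivalence between $A$-cellular spaces is a weak equivalence, extracted formally from the recognition theorem via the retraction/lifting argument) plus the transfinite small-object construction, from which all the listed properties then follow formally exactly as you state. The outline is correct; the one step you explicitly defer---convergence of the tower, i.e.\ the cardinality bookkeeping and the compatibility of $\mapp(A,-)$ and homotopy fibres with the filtered homotopy colimit---is indeed where all the genuine work lives, and it is carried out in the cited reference rather than in this paper.
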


The space $CW_AX$ is called the $A$-\emph{cellularization} or $A$-\emph{cellular approximation} of $X$, while the map $CW_AX\rightarrow X$ is called the $A$-\emph{cellular cover}. It is also common to denote $cell_AX$ instead of $CW_AX$.

A disadvantage of Farjoun construction of $CW_AX$ is that it includes a transfinite process which makes very difficult to figure out $CW_AX$ out of $X$. This problem was overcame by the following result of Chach\'olski, which is maybe the most powerful available tool to compute cellular approximations, and which is crucial in this paper.

Recall that for every $X$, the \emph{A-nullification} of $X$ is the localization of $X$ with respect to the constant map $A\rightarrow \ast$ (see \cite{MR1408539}, Section 14) denoted by $P_AX$. We say $X$ is $A$-null if $ev\colon \map(A,X)\rightarrow X$ is a homotopy equivalence ($X\simeq P_AX$) and $X$ is $A$-acyclic if $P_AX\simeq *$. From the definitions we have that $A$-cellular spaces are $A$-acyclic.

\begin{theorem}[{\cite[Theorem 20.3]{MR1408539}}]
\label{t:chacholski}
Let $A$ be pointed and connected and let $X\rightarrow X'$ be a pointed map.  Assume that the homotopy fibre of this map is $A$-cellular, and the induced
map $[A,X]_*\rightarrow [A,X']_*$ is trivial. Then the map from the homotopy fiber of the composition $(X\rightarrow X'\rightarrow P_{\Sigma A}X')$ to $X$ is the $A$-cellular cover of $X$; that is, there is a homotopy fiber sequence
$$CW_AX\rightarrow X\rightarrow P_{\Sigma A}X'.$$
\end{theorem}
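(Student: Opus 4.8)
The plan is to verify that the natural map $C:=\operatorname{hofib}(X\to X'\to P_{\Sigma A}X')\to X$ satisfies the two conditions that force it to be the $A$-cellular cover of $X$, namely that $C$ is $A$-cellular and that it induces a weak equivalence $\mapp(A,C)\to\mapp(A,X)$; any $A$-cellular space over $X$ inducing such a weak equivalence must agree with $CW_AX\to X$, by the defining properties in Proposition \ref{p:CW def} together with the recognition criterion above.

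I would settle the mapping-space condition first, as it is purely formal. Since $\mapp(A,-)$ is a right adjoint it carries the homotopy fibre sequence $C\to X\to P_{\Sigma A}X'$ to a homotopy fibre sequence $\mapp(A,C)\to\mapp(A,X)\to\mapp(A,P_{\Sigma A}X')$. As $P_{\Sigma A}X'$ is $\Sigma A$-null we have $\Omega\,\mapp(A,P_{\Sigma A}X')\simeq\mapp(\Sigma A,P_{\Sigma A}X')\simeq\ast$, so the basepoint component of $\mapp(A,P_{\Sigma A}X')$ is weakly contractible; moreover, on $\pi_0$ the map $\mapp(A,X)\to\mapp(A,P_{\Sigma A}X')$ is the composite $[A,X]_*\to[A,X']_*\to[A,P_{\Sigma A}X']_*$, whose first arrow vanishes by hypothesis, so $\mapp(A,X)$ lands entirely in that basepoint component. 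Since the homotopy fibre over the basepoint depends only on the basepoint component of the target, $\mapp(A,C)\to\mapp(A,X)$ is a weak equivalence.

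The substantive point is that $C$ is $A$-cellular. For this I would invoke the fibre sequence of the composite $X\xrightarrow{f}X'\to P_{\Sigma A}X'$, namely $F\to C\to\bar F$ with $F=\operatorname{hofib}(f)$ and $\bar F=\operatorname{hofib}(X'\to P_{\Sigma A}X')$. By hypothesis $F$ is $A$-cellular; the fibre $\bar F$ of a $\Sigma A$-nullification is $A$-cellular by Chach\'olski's analysis of nullification maps in \cite{MR1408539}, which is exactly where connectedness of $A$ is used. Finally, for connected $A$ the class of $A$-cellular spaces is closed under extensions by fibrations (\cite{MR1392221,MR1408539}), and applying this to $F\to C\to\bar F$ shows that $C$ is $A$-cellular, completing the argument.

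The main obstacle is thus not in the formal reductions but in the two imported facts about $\bar F$ and about closure under fibration extensions: $A$-cellularity is defined by means of pointed homotopy \emph{colimits}, so it is genuinely a theorem — proved by expressing a fibration as a homotopy colimit over its base — that the property survives passage to a fibration total space, and likewise that the fibre of a localization map is cellular rather than merely acyclic; everything else above is bookkeeping with adjunctions and fibre sequences.
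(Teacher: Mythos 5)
The paper offers no proof of this statement; it is imported verbatim from Chach\'olski \cite[Theorem 20.3]{MR1408539}, so your attempt can only be compared with his argument. Your overall reduction is the right one: it suffices to check that $C:=\Fib(X\rightarrow P_{\Sigma A}X')$ is $A$-cellular and that $\mapp(A,C)\rightarrow\mapp(A,X)$ is a weak equivalence. Your verification of the second condition (contractibility of the basepoint component of $\mapp(A,P_{\Sigma A}X')$, plus the hypothesis that $[A,X]_*\rightarrow[A,X']_*$ is trivial to force the image of $\mapp(A,X)$ into that component) is correct and is indeed the purely formal part.

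The gap is in the cellularity of $C$. The closure property you invoke --- that for connected $A$ a fibration $F\rightarrow E\rightarrow B$ with $F$ and $B$ both $A$-cellular has $A$-cellular total space --- is false. Take $A=B\mathbb{Z}/p$ and the fibration $B\mathbb{Z}/p\rightarrow B\mathbb{Z}/p^{2}\rightarrow B\mathbb{Z}/p$ induced by the extension $0\rightarrow\mathbb{Z}/p\rightarrow\mathbb{Z}/p^{2}\rightarrow\mathbb{Z}/p\rightarrow 0$: fibre and base are tautologically $A$-cellular, yet by Proposition \ref{p:CW BZpinfty BPinfty}(b) of this paper $CW_{B\mathbb{Z}/p}(B\mathbb{Z}/p^{2})\simeq B\mathbb{Z}/p$, so the total space is not $A$-cellular. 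The closure statement that does hold (Proposition \ref{p:CW fib}, Farjoun's 3.E.1) goes in the opposite direction: fibre and \emph{total space} cellular imply the base is cellular. Extension-closure is available for $A$-\emph{acyclic} spaces, and for $A$-cellular spaces only when the fibration admits a section (this is the \cite[Theorem 4.7]{MR1408539} used in the $BO(n)$ example of Section \ref{section6}); the fibration $F\rightarrow C\rightarrow\bar F$ has no section in general, so your argument at best yields that $C$ is $A$-acyclic. Closing exactly this acyclic-versus-cellular gap is the substantive content of Chach\'olski's theorem, and his proof does not proceed by decomposing $C$ along this fibration: it relies on his delicate cellular inequalities comparing cofibration and fibration sequences. (The other fact you import, that $\Fib(X'\rightarrow P_{\Sigma A}X')$ is $A$-cellular for connected $A$, is a genuine theorem of the same paper and is legitimate to quote, but it does not repair the extension step.)
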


Observe in particular that the previous theorem implies that the fibration $CW_AX\rightarrow X$ is principal, and in particular the functor $CW_A$ preserves nilpotence \cite[V.5.2]{MR0365573}.

\begin{remark}\label{r:chacholski}
When $A$ and $X$ are understood and a choice of $X'$ is made, the map $X\rightarrow X'\rightarrow P_{\Sigma A}X'$ will be called in the text \emph{Chach\'olski fibration}. The most common choice of $X'$ is the $\Sigma A$-nullification of the homotopy cofibre of the map $\bigvee_{[A,X]_*}A\rightarrow X$, where the wedge is extended to all the pointed homotopy classes of maps $A\rightarrow X$ and the map over the wedge is defined in the obvious way ( see \cite[Section 20]{MR1408539} for more information).
\end{remark}

\begin{remark}\label{r:wedgevsproduct}
 Note that $X<<Y$ if and only if $\mathcal{C}(Y)\subset \mathcal{C}(X)$. Then if $\mathcal{C}(Y)=\mathcal{C}(X)$ we have that the functors $CW_X$ and $CW_Y$ are naturally equivalent. In particular, this applies to the wedge and product of two spaces, $\mathcal{C}(A\vee B)=\mathcal{C}(A\times B)$.
 \end{remark}

We finish with a useful result which describes behavior of cellularity under fibrations.

\begin{prop}[{\cite[3.E.1]{MR1392221}}]
\label{p:CW fib}

{\color{black} Let $A$ be a space, $F\rightarrow E\rightarrow B$ be a fibration of pointed spaces. If $F$ and $E$ are $A$-cellular, then $B$ is so.}

\end{prop}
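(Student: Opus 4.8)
The plan is to apply the recognition criterion quoted above: to show $B$ is $A$-cellular it suffices to prove that every $f\colon Y\to Z$ inducing a weak equivalence $\mapp(A,Y)\to\mapp(A,Z)$ also induces a weak equivalence $\mapp(B,Y)\to\mapp(B,Z)$. Since $E$ and $F$ are $A$-cellular, such an $f$ automatically induces weak equivalences on $\mapp(E,-)$ and on $\mapp(F,-)$, so what must be done is to propagate this property from $E$ and $F$ to $B$. First I would dispose of trivial cases: if $A$ is disconnected then $S^{0}\in\mathcal{C}(A)$ (collapse one component of $A$ and iterate), so $\mathcal{C}(A)$ is the class of all spaces and there is nothing to prove; hence we may assume $A$, and therefore $E$ and $F$, connected, and --- the disconnected case being excluded by the hypotheses --- also $B$ connected.

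The key step, which I expect to carry the real content, is to prove that the homotopy cofibre $C:=\hocofib(p)$ of $p\colon E\to B$ is $A$-cellular. Put $G=\Omega B$ (a well-pointed topological group model). The two-sided bar construction presents $E\simeq|B_{\bullet}(\ast,G,F)|$ and $B\simeq|B_{\bullet}(\ast,G,\ast)|$ as geometric realizations of proper simplicial pointed spaces, with $p$ realized levelwise by the projections $G^{n}\times F\to G^{n}$; since realization commutes with homotopy cofibres, $C\simeq|\,[n]\mapsto\hocofib(G^{n}\times F\to G^{n})\,|$. Next I would note that for any pointed space $X$ the projection $\mathrm{pr}\colon X\times F\to X$ is split by $x\mapsto(x,\ast)$, with cofibre $X_{+}\wedge F$, so the octahedral axiom applied to the composable pair $X\xrightarrow{s}X\times F\xrightarrow{\mathrm{pr}}X$ (composite the identity) yields a cofibre sequence $X_{+}\wedge F\to\ast\to\hocofib(\mathrm{pr})$, i.e. $\hocofib(\mathrm{pr})\simeq\Sigma(X_{+}\wedge F)\simeq X_{+}\wedge\Sigma F$. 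Thus the $n$-th level of our simplicial space is $G^{n}_{+}\wedge\Sigma F$. Finally, $\Sigma F=\hocolim(\ast\leftarrow F\to\ast)$ is $A$-cellular because $F$ is, and the smash product of an $A$-cellular space with an arbitrary pointed space is again $A$-cellular (attach the cells of the second factor one at a time and use closedness of $\mathcal{C}(A)$ under pointed homotopy colimits); hence every level is $A$-cellular, and therefore so is the realization $C$.

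To conclude I would replace $p$ by a cofibration (its mapping cylinder), so that $E\to B\to C$ becomes a cofibre sequence, and apply $\mapp(-,W)$ to obtain a fibration sequence $\mapp(C,W)\to\mapp(B,W)\to\mapp(E,W)$. For $f\colon Y\to Z$ as above this is a map of fibration sequences in which the maps on the fibre $\mapp(C,-)$ and on the base $\mapp(E,-)$ are weak equivalences (because $C$ and $E$ are $A$-cellular); by the comparison theorem for fibrations the map on total spaces $\mapp(B,Y)\to\mapp(B,Z)$ is then a weak equivalence, and the recognition criterion gives that $B$ is $A$-cellular.

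The point that requires care throughout is staying within \emph{pointed} spaces: ``a homotopy colimit of $A$-cellular spaces is $A$-cellular'' holds for pointed homotopy colimits but fails for unpointed ones (disjoint unions, for instance), so the naive presentation $E\simeq\hocolim_{b\in B}F_{b}$, whose local system of fibres carries no natural basepoints, does not directly help. Passing to the cofibre is exactly what repairs this --- it replaces each fibre $F_{b}$ by the canonically (bi)pointed unreduced suspension $\tilde\Sigma F_{b}\simeq\Sigma F$ and turns the powers $G^{n}$ of the structure group, which need not be $A$-cellular, into inert smash factors $G^{n}_{+}$.
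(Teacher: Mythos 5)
Your reduction to the recognition criterion and your computation of the cofibre are fine: the bar-construction argument giving $C=\hocofib(E\to B)\simeq |[n]\mapsto G^{n}_{+}\wedge\Sigma F|$ is correct, and (as you implicitly notice) it uses only the cellularity of the fibre $F$. The gap is in the last step. The restriction map $\mapp(B,W)\to\mapp(E,W)$ is indeed a fibration, but $\mapp(C,W)$ is its fibre only over the component of the constant map; the fibre over a general $g\in\mapp(E,W)$ is the space of extensions of $g$ over $B$, which is not $\mapp(C,W)$ and about which you know nothing. The comparison theorem for fibrations needs the fibre map to be an equivalence over every component of the base, so your argument only shows that $\mapp(B,Y)\to\mapp(B,Z)$ is an equivalence on the components of maps $B\to Y$ that are null on $E$. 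Put differently, you are invoking the implication ``$E\to B\to C$ a cofibre sequence with $E$ and $C$ $A$-cellular $\Rightarrow B$ $A$-cellular'', and this is not a closure property of closed classes: $\mathcal{C}(A)$ is closed under pointed homotopy colimits, and the cofibre sequence exhibits $C$, not $B$, as a pointed homotopy colimit of the other two terms. (The paper gives no argument of its own here --- it simply quotes Farjoun's 3.E.1 --- so this is precisely the step one has to supply.)

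The repair is to present $B$ itself, rather than $C$, as a pointed homotopy colimit of $A$-cellular spaces, and the Ganea construction does exactly that. Set $G_{0}=E$, with homotopy fibre $F_{0}=F$ over $B$, and inductively $G_{n+1}=\hocofib(F_{n}\to G_{n})$, where $F_{n}$ is the homotopy fibre of $G_{n}\to B$. Ganea's theorem gives $F_{n+1}\simeq F_{n}\ast\Omega B\simeq\Sigma(F_{n}\wedge\Omega B)$, so $F_{n}\simeq\Sigma^{n}(F\wedge(\Omega B)^{\wedge n})$ is $A$-cellular by the same smash-and-suspend argument you already use, and each $G_{n}$ is $A$-cellular by induction, being the cofibre of a map between $A$-cellular spaces. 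Since $F_{n}$ is $n$-connected, the map $\hocolim_{n}G_{n}\to B$ is a weak equivalence, and $B$ is $A$-cellular as a telescope of $A$-cellular spaces. Note also that the statement as printed requires $B$ connected (e.g. $\ast\to\ast\to S^{0}$ is a fibration with contractible fibre and total space but non-cellular base); you assume this, but it is not literally among the stated hypotheses.
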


\subsection{$p$-local compact groups}

We will follow here the source \cite{MR2302494}, and we refer the reader to it for any additional information he/she needs.

 In this section we want to introduce the concept of ``$p$-local homotopy theory'' of classifying spaces of compact Lie groups, that is, the homotopy theory of the $p$-completion of those classifying spaces.
 If $G$ is a compact Lie group, then it turns out that there is a close connection between the $p$-local homotopy theory of $BG$ and the ``$p$-local structure'' of $G$, this means,
 the conjugacy relations in a Sylow $p$-subgroup of $G$ (i.e., in a maximal $p$-toral group). This connection then suggested the construction of certain spaces which have many of the
 same properties as $p$-completed classifying spaces of a compact Lie group have: the classifying spaces of ``$p$-local compact groups''.

 First we need to describe discrete $p$-toral groups, which in particular play the role of Sylow $p$-groups in $p$-local compact groups. We will give certain detail here, as the cellular strucure of their classifying spaces will be crucial for our further developments.

 \begin{defi}\label{d:p toral group}
  A \textit{$p$-toral group} is a compact Lie group $P$ which contains a normal subgroup $P_0 \trianglelefteq P$ isomorphic to a torus, i.e, $P_0 \cong (S^1)^r$, and such that the
  quotient $P/P_0$ is a finite $p$-group.

  For such a group, we say that $P_0$ is the \textit{connected component} or \textit{maximal torus} of $P$ and $P/P_0$ is the \textit{group of components} of $P$.

  The \textit{rank} of a $p$-toral group $P$ is the rank of the maximal torus $P_0$. That is, if $P_0 \cong (S^1)^r$, then $rk(P) = r$.
 \end{defi}

 We have that a $p$-toral group is an extension of a torus and a finite $p$-group; analogously, a discrete $p$-toral group is an extension of a discrete $p$-torus, that is, a finite product of copies of $\z{/p^{\infty}}$ and a finite $p$-group.

 \begin{defi}\label{d:disc p toral group}
  A \textit{discrete $p$-toral group} is a group $P$ which contains a normal subgroup $P_0 \trianglelefteq P$ isomorphic to $(\z{/p^{\infty}})^r$, and such that the quotient
  $P/P_0$ is a finite $p$-group. In this case, the \textit{rank} of $P$ is $rk(P) = r$, the \textit{connected component} or \textit{maximal torus} of $P$ is $P_0$, and $P/P_0$ is
  called the \textit{group of components} of $P$. If $P = P_0$, then $P$ is called \textit{connected}.
 \end{defi}

 For example,  a finite $p$-group is a discrete $p$-toral group of rank $0$.

We are now ready to define the concept of $p$-local compact group as formalized by Broto-Levi-Oliver \cite{MR2302494}, that built in turn on previous work of Puig \cite{MR2253665} for the finite case.  Such objects have classifying spaces which satisfy many of the homotopy theoretic properties of $p$-completed classifying spaces of finite groups and compact Lie groups.

A saturated fusion system (see the formal definition in \cite[2.1, 2.2]{MR2302494}) is a category whose objects are the subgroups of $S$, and
whose morphisms are injections between these subgroups. These morphisms are intended to codify the \emph{fusion data} of the $p$-local compact group,
in the same sense as the injections and conjugations codify the fusion in a finite or more generally in a $p$-compact group. The centric linking system
(formal definition in \cite[4.1]{MR2302494}) is another category whose objects are some subgroups of $S$ (the \emph{centric} subgroups \cite[2.6]{MR2302494}),
and such that if $\F^c$ is the subcategory of $\F$ whose objects are these subgroups, there is a projection functor $\pi:\Lin \rightarrow \F^c$ which is surjective
on morphisms. It is noteworthy that the original definition of Broto-Levi-Oliver of $p$-local compact group included the linking system in the definition,
but this has been proved redundant by a recent result of Levi-Libman \cite{MR3338608}, Chermak \cite{Chermak} and Oliver \cite{OliverUniq}.

 \begin{defi}
  A \textit{$p$-local compact group} is defined to be a pair $(S,\F)$, where $S$ is a discrete $p$-toral group (called the $p$-Sylow) and $\F$ is a
  \emph{saturated fusion system} over $S$. Associated to $\F$ there is a unique centric linking system $\Lin$ whose $p$-completed nerve $\p{|\Lin|}{p}$ is called the \textit{classifying space} of the $p$-local compact group and usually denoted $\Linp$. If $S$ is a finite $p$-group, hence $(S,\F)$ is called a \textit{$p$-local finite group}.
 \end{defi}

Of course, every compact Lie group gives rise in every prime $p$ to a $p$-local compact group, and the nerve of $\Lin$ plays the role of the classifying space of the group in that prime:

 \begin{theorem}[{\cite[Theorem 9.10]{MR2302494}}]
  Fix a compact Lie group $G$ and a maximal discrete $p$-toral subgroup $S<G$. Then there exists a saturated fusion system $\F_S(G)$ associated to group $G$ and a centric linking system $\Lin^c_S(G)$ associated to $\F_S(G)$ such that
  $(S, \F_S(G))$ is a $p$-local compact group with classifying space $\p{|\Lin_S(G)|}{p} \simeq \p{BG}{p}$.
 \end{theorem}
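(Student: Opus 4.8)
\medskip
\noindent\emph{Proof idea.} The plan is to follow the standard route for manufacturing a $p$-local compact group out of a compact Lie group $G$: first pin down the discrete $p$-toral Sylow $S$ together with a Sylow theorem, then define the fusion system $\F_S(G)$ and the centric linking system $\Lin^c_S(G)$ as orbit-type categories built from conjugation inside $G$, and finally identify the $p$-completed nerve of the linking system with $\p{BG}{p}$ via a homology decomposition.

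First I would fix a maximal torus $T\le G$, let $\pi\colon N_G(T)\to W=N_G(T)/T$ be the projection onto the Weyl group, and set $T_{p^{\infty}}\le T$ to be the subgroup of all elements of $p$-power order, so that $T_{p^{\infty}}\cong(\z{/p^{\infty}})^{r}$ with $r=\dim T$. By the classical structure theory, the preimage $\pi^{-1}(W_p)$ of a Sylow $p$-subgroup $W_p\le W$ is a maximal $p$-toral subgroup of $G$, unique up to conjugacy; its discrete approximation (every $p$-toral group contains a dense discrete $p$-toral subgroup, unique up to conjugacy) is a discrete $p$-toral group $S<G$ with $S\cap T=T_{p^{\infty}}$ and $S/T_{p^{\infty}}\cong W_p$. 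The first genuine task is the Sylow theorem in this setting: every discrete $p$-toral subgroup of $G$ is subconjugate to $S$, and $S$ is unique up to $G$-conjugacy. The essential input is precisely the discrete-approximation machinery, which transports the classical Sylow theory of honest maximal $p$-toral subgroups of compact Lie groups to the discrete models.

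Next I would define $\F=\F_S(G)$ to have the subgroups of $S$ as objects and $\Hom_{\F}(P,Q)=\{c_g\colon P\to Q\mid g\in G,\ gPg^{-1}\le Q\}$ as morphisms, and $\Lin=\Lin^c_S(G)$ to have the $\F$-centric subgroups of $S$ as objects, with $\Mor_{\Lin}(P,Q)=\{g\in G\mid gPg^{-1}\le Q\}/C'_G(P)$, where $C'_G(P)$ is the complement to $Z(P)=C_S(P)$ inside $C_G(P)$ available precisely because $P$ is $\F$-centric (so that $Z(P)$ is the maximal discrete $p$-toral subgroup of the compact Lie group $C_G(P)$). One then has to verify the saturation axioms for $\F$ and the linking-system axioms for $\Lin$; most of this is a matter of feeding into the axioms the Sylow theorem above together with the structure of normalizers and centralizers of discrete $p$-toral subgroups of $G$, the one subtle point being the saturation/extension axiom, which again passes through the discrete approximation of $p$-toral subgroups. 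One should also record that there are only finitely many $\F$-conjugacy classes of $\F$-centric $\F$-radical subgroups of $S$, which is what keeps the relevant homotopy colimits under control.

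Finally, for the classifying space I would build the canonical map $|\Lin^c_S(G)|\to BG$ coming from the inclusions $C'_G(P)\hookrightarrow G$ (realised through the bar construction, $P\mapsto EG\times_{C'_G(P)}\ast$) and show it is a mod $p$ homology equivalence, so that $\p{|\Lin^c_S(G)|}{p}\simeq\p{BG}{p}$. The tool is a subgroup homology decomposition of $\p{BG}{p}$ indexed on an orbit category of discrete $p$-toral subgroups: one first reduces the indexing category to the $\F$-centric subgroups, using that the higher derived limits over the full subcategory of non-centric subgroups of the functors in play vanish mod $p$, and then observes that what survives is exactly the nerve of $\Lin^c_S(G)$ after $p$-completion. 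I expect the two hard steps to be (i) the Sylow theorem and the saturation of $\F_S(G)$, which rest on the discrete-approximation technology for $p$-toral subgroups of compact Lie groups, and (ii) the homotopy-theoretic identification of $\p{|\Lin^c_S(G)|}{p}$, which needs both the homology decomposition of $\p{BG}{p}$ and the higher-limit vanishing that cuts it down to centric subgroups; the remaining axiom checks are routine once these structural facts are in place.
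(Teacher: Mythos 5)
This statement is quoted in the paper directly from Broto--Levi--Oliver \cite[Theorem 9.10]{MR2302494} and is not proved there, so the only meaningful comparison is with the original source; your outline (discrete approximation and the Sylow theorem for discrete $p$-toral subgroups, the conjugation definition of $\F_S(G)$ and of $\Lin^c_S(G)$ via the complement $C'_G(P)$ for $\F$-centric $P$, and the identification of $\p{|\Lin^c_S(G)|}{p}$ with $\p{BG}{p}$ through a subgroup homology decomposition reduced to centric subgroups by higher-limit vanishing) is essentially the route taken there. No gaps at the level of detail you give.
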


 One of the standard techniques used when studying maps between $p$-completed classifying spaces of finite groups is to replace them by the $p$-completion of a homotopy colimit of
 simpler spaces.

 \begin{defi}
  Let $\F$ be a saturated fusion system over a discrete $p$-toral group $S$. The \textit{orbit category} of $\F$ is the category $\OF$ whose objects are the subgroups of $S$, and
  whose morphisms are defined by
  $$
   \Mor_{\OF}(P,Q) = \Rep_{\F}(P,Q) := \Inn(Q)/\Hom_{\F}(P,Q).
  $$
  We let $\OFc$ denote the full subcategory of $\OF$ whose objects are the $\F$-centric subgroups of $S$. If $\Lin$ is a centric linking system associated to $\F$, then
  $\tilde{\pi}$ denotes the composite functor
  $$
   \xymatrix@1{\tilde{\pi} \colon \Lin \ar[r]^-{\pi} & \F^c \ar@{->>}[r] & \OFc}
  $$
 \end{defi}

 We next look at the homotopy type of the nerve of a centric linking system.

 \begin{prop}\label{p:L = hocolim}
  Fix a saturated fusion system $\F$ over a discrete $p$-toral group $S$ and an associated centric linking system $\Lin$, and let $\tilde{\pi} \rightarrow \OFc$ be the projection
  functor. Let
  $$
   \tilde{B} \colon \OFc \rightarrow \Topo
  $$
  be the left homotopy Kan extension over $\tilde{\pi}$ of the constant functor $\xymatrix@1{\Lin \ar[r]^-{\ast} & \Topo}$. Then $\tilde{B}$ is a homotopy lifting of the homotopy
  functor $P \mapsto BP$, and
  $$
   |\Lin| \simeq \hocolim_{\OFc} (\tilde{B}).
  $$
 \end{prop}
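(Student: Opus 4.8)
The plan is to deduce the statement from the general calculus of homotopy colimits together with the structural axioms of a centric linking system, essentially reproducing the argument of \cite[Section 4]{MR2302494}. Recall two formal facts: for any small category $\mathcal{C}$ the nerve is the homotopy colimit of the constant functor at a point, $|\mathcal{C}|\simeq\hocolim_{\mathcal{C}}(\ast)$; and for a functor $F\colon\mathcal{C}\to\mathcal{D}$ between small categories and a diagram $\Phi\colon\mathcal{C}\to\Topo$ there is a natural equivalence $\hocolim_{\mathcal{C}}(\Phi)\simeq\hocolim_{\mathcal{D}}(\operatorname{LKan}_F\Phi)$, where $\operatorname{LKan}_F$ denotes the homotopy left Kan extension, whose value at an object $d\in\mathcal{D}$ is the homotopy colimit of $\Phi$ restricted (along the projection) to the comma category $F\downarrow d$. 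First I would apply this with $\mathcal{C}=\Lin$, $\mathcal{D}=\OFc$, $F=\tilde\pi$ and $\Phi=\ast$: by definition $\tilde{B}=\operatorname{LKan}_{\tilde\pi}(\ast)$, so the equivalence $|\Lin|=\hocolim_{\Lin}(\ast)\simeq\hocolim_{\OFc}(\tilde{B})$ is immediate, and the entire content of the proposition is the identification of $\tilde{B}$ as a homotopy lifting of the functor $P\mapsto BP$.

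For that, note that $\tilde{B}(P)\simeq|\tilde\pi\downarrow P|$, where $\tilde\pi\downarrow P$ is the comma category whose objects are pairs $(Q,\alpha)$ with $Q$ an object of $\Lin$ and $\alpha\in\Mor_{\OFc}(Q,P)=\Rep_{\F}(Q,P)$, and whose morphisms are the $\Lin$-morphisms compatible with the structure maps. The key step is to show that the inclusion of the full subcategory on the single object $(P,\mathrm{id}_P)$ induces a homotopy equivalence on nerves. The automorphism group of $(P,\mathrm{id}_P)$ in $\tilde\pi\downarrow P$ is the preimage in $\Aut_{\Lin}(P)$ of the identity of $\Aut_{\OFc}(P)=\Out_{\F}(P)$; by the linking system axioms relating the distinguished monomorphism $\delta_P$ and the projection $\pi$, this preimage is $\delta_P(P)\cong P$, so the subcategory is isomorphic to $BP$. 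I would then verify that $(P,\mathrm{id}_P)$ is homotopy cofinal via Quillen's Theorem A: every object $(Q,\alpha)$ admits a morphism to $(P,\mathrm{id}_P)$, obtained by lifting a representative $\bar\alpha\in\Hom_{\F}(Q,P)$ of $\alpha$ to some $\tilde\alpha\in\Mor_{\Lin}(Q,P)$ (possible since $\pi$ is surjective on morphisms), and the relevant under-categories are contractible by the transitivity and essential uniqueness of such lifts, i.e. by the freeness of the relevant $Z(\cdot)$-actions on morphism sets in $\Lin$. This yields $\tilde{B}(P)\simeq BP$, naturally in the homotopy category, which is exactly the asserted homotopy lifting property; it is moreover compatible, through \cite[Theorem 9.10]{MR2302494}, with $\Linp=\p{|\Lin|}{p}$ modelling $p$-completed classifying spaces.

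The hard part will be the last homotopy-cofinality verification: showing that the inclusion of $(P,\mathrm{id}_P)$ into $\tilde\pi\downarrow P$ is a homotopy equivalence is not a formal consequence of the homotopy-colimit calculus but genuinely uses the linking system structure, namely the composition rule for lifts of $\F$-morphisms to $\Lin$-morphisms and the axioms governing $\delta$ and $\pi$. Everything else reduces to standard manipulations of homotopy Kan extensions. Since both the statement and this line of proof are due to Broto-Levi-Oliver, in the final write-up I would simply cite \cite{MR2302494} for the details and record only the outline above.
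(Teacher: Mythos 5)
Your proposal is correct, and it reconstructs the standard Broto--Levi--Oliver argument (homotopy left Kan extensions preserve homotopy colimits, $\tilde{B}(P)\simeq|\tilde\pi\downarrow P|$, and the comma category retracts onto the full subcategory on $(P,\mathrm{id}_P)$, whose automorphism group is $\delta_P(P)\cong P$ by the linking system axioms). The paper itself gives no proof of this proposition --- it is recalled as background from \cite{MR2302494} --- so your plan of outlining the argument and citing that source for the cofinality/retraction details is exactly in line with how the result is treated here; the only cosmetic difference is that Broto--Levi--Oliver establish the equivalence $\mathcal{B}(P)\simeq\tilde\pi\downarrow P$ by an explicit deformation retraction rather than via Quillen's Theorem A, but both hinge on the same freeness and transitivity properties of the $\Lin$-morphism sets.
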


 Finally, we remark an important class of subgroups of $S$ that are preserved by fusion:

 \begin{defi}
  Let $\F$ be a fusion system over a discrete $p$-toral $S$. Then a subgroup $K \leq S$ is \textit{strongly $\F$-closed} if for all $P \leq K$ and all morphism
  $\varphi \colon P \rightarrow S$ in $\F$ we have $\varphi(P) \leq K$.
 \end{defi}

 Note that if $G$ is a compact Lie group and $S \in \Syl_p(G)$,  $K \subseteq S$ is strongly $\F_S(G)$-closed if and only if $K$ is \textit{strongly closed in $G$}, i.e., if for all
 $k \in K$ and $g \in G$ such that $c_g(s) \in S$, then $c_g(s) \in K$.

 It is easy to see that an intersection of strongly $\F$-closed subgroups is a strongly $\F$-closed subgroup. Hence note that, given a subgroup $P \leq S$, we denote by $Cl_{\F}(P)$
 the smallest strongly $\F$-closed subgroup of $S$ that contains $P$. This definition makes sense since discrete $p$-toral subgroups are artinian \cite[Proposition 1.2]{MR1992826}. We use the notation $Cl_{\F}(\z /p^m)$ for $Cl_{\F}(\Omega_{p^m}S)$, where $\Omega_{p^m}S$ is the normal subgroup of $S$ generated by all elements of order less than or equal to $p^m$.

\subsection{Notbohm kernel of maps from classifying spaces}

A key ingredient in \cite{MR2823972} which allowed to describe the $\bz{p}$-cellularization of $BG$ for $G$ finite was a precise description of the minimal strongly closed subgroup $Cl(S)$ of the $p$-Sylow $S<G$ that contains all the order $p$-elements of $S$, and a group-theoretic classification of these objects. Then, in order to understand the cellularization of more general classifying spaces, it was necessary to identify a subgroup of the corresponding Sylow $p$-subgroup which played the role of $Cl(S)$ in these broader contexts. In the context of fusion systems, this role will not be played by $Cl_{\F}(\z /p^m)$, but by a bigger subgroup of $S$: the \emph{Notbohm kernel} of a certain map.



 \begin{defi}\label{d:kerf}
  Let $(S,\F)$ be a $p$-local compact group and $Z$ be a $p$-complete and $\sbz{p}$-null space. Let $f \colon \Linp \rightarrow Z$ be a pointed map. Then the \emph{Notbohm kernel} of $f$ is defined as
  $$
   \ker(f) := \{g \in S \mid f|_{B\langle g \rangle} \simeq \ast \}.
  $$
 \end{defi}

 Notbohm proposed his definition of kernel in the context of compact Lie groups, and the original definition is used by the first and third author in \cite{CellFusSys}, where they explore cellular approximations for $p$-local finite groups. The generalization we propose to $p$-local compact groups is nevertheless immediate.

The following is a concrete description of the Notbohm kernel in the case of a map between $p$-completed classifying spaces of compact Lie groups. For simplicity, the Notbohm kernel will be called simply `kernel' in the remaining of the paper, as long as there will be no confusion with any other kernel of a group homomorphism.

 \begin{example}
  Let $f \colon \p{BH}{p} \rightarrow \p{BG}{p}$ be a map where $H$ and $G$ are compact Lie groups. Let $S \in \Syl_p(H)$, then for the map
  $f|_{BS} \colon BS \rightarrow \p{BH}{p} \rightarrow \p{BG}{p}$ there exists a group homomorphism $\rho \colon S \rightarrow G$ such that $f|_{BS} \simeq B\rho$ according to
  \cite[Theorem 1.1']{MR928826}. In this case, $\ker(f)$ is the kernel of the group homomorphism $\rho$.
 \end{example}

 The kernel of a map from the classifying space of a $p$-local compact group verifies similar properties to the kernel of a map between $p$-local finite groups. These properties will be crucial to describe the role of the kernel in the cellular structure of the classifying spaces we are interested on, compare with \cite[Section 4]{MR2351607}.

 \begin{prop}\label{p:kerf strongly}
  Let $(S,\F)$ be a $p$-local compact group and let $f \colon \Linp \rightarrow Z$ be a pointed map, where $Z$ is a $p$-complete and $\sbz{p}$-null space. Then $\ker(f)$ is a
  strongly $\F$-closed subgroup of $S$.
 \end{prop}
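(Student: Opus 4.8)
The plan is to show that $\ker(f)$ is closed under two operations: first that it is an honest subgroup of $S$, and second that it is invariant under every morphism of $\F$. The whole argument rests on the key structural fact that $Z$ is $p$-complete and $\sbz{p}$-null, which by Miller's theorem (and its consequences for mapping spaces out of $B\mathbb{Z}/p$-spaces, as developed by Lannes and Notbohm) forces maps $B\langle g\rangle \to Z$ to behave rigidly: a map out of $BV$ for $V$ a discrete $p$-toral group that is null on enough cyclic subgroups is itself null. Concretely, I would first record the auxiliary observation that for $Z$ being $\sbz{p}$-null and $p$-complete, a pointed map $B P \to Z$ from a discrete $p$-toral group $P$ is null-homotopic if and only if its restriction to $B\langle g\rangle$ is null for every $g \in P$; this is essentially the content of the ``detection on cyclic subgroups'' principle and should be citable from \cite{MR1286829} or recoverable from the properties of mapping spaces into $\sbz{p}$-null spaces.

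\textbf{Step 1 (subgroup).} Given $g, h \in \ker(f)$, I must show $gh \in \ker(f)$ and $g^{-1} \in \ker(f)$. The inverse case is immediate since $B\langle g^{-1}\rangle = B\langle g\rangle$. For the product, let $P = \langle g, h\rangle \leq S$, a finitely generated discrete $p$-toral (in fact finite $p$-) subgroup. The restriction $f|_{BP} \colon BP \to \Linp \to Z$ restricts to a null map on $B\langle g\rangle$ and on $B\langle h\rangle$. I would like to conclude $f|_{BP} \simeq \ast$, and for that I invoke the detection principle above — but this requires knowing $f|_{BP}$ is null on \emph{all} cyclic subgroups of $P$, not just the two generators. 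So the honest argument is: it suffices to observe that $f|_{BP}$ factors (up to homotopy, after $p$-completion) through $B\rho$ for a genuine homomorphism $\rho \colon P \to$ (something), using the fact that $\Linp$ is $p$-good and mapping spaces from $BP$ into $p$-complete $\sbz{p}$-null targets are understood; then $\rho$ kills $g$ and $h$, hence kills $\langle g,h\rangle$, hence kills $gh$. This is the step I expect to be the \textbf{main obstacle}: one must be careful that the relevant rigidity (a map from $BP$ detected on cyclics is determined by, or factors through, a homomorphism) genuinely applies with target $Z$ an arbitrary $p$-complete $\sbz{p}$-null space, and not merely a $p$-completed classifying space. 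The cleanest route is probably to reduce to the finite case handled in \cite{CellFusSys}: any $g, h \in S$ lie in a finite subgroup, and the Notbohm kernel of $f$ restricted to the relevant finite sub-data agrees with the finite-group notion, so the subgroup property is inherited.

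\textbf{Step 2 (strongly closed).} Now suppose $g \in \ker(f)$ and $\varphi \colon \langle g \rangle \to S$ is a morphism in $\F$; I must show $\varphi(g) \in \ker(f)$, i.e. $f|_{B\langle \varphi(g)\rangle} \simeq \ast$. The point is that morphisms in $\F$ are realized at the level of classifying spaces: by the structure of the linking system there is a homotopy commutative triangle relating $B\langle g\rangle \to \Linp$ and $B\langle \varphi(g)\rangle \to \Linp$ via the map induced by $\varphi$ (this is exactly the compatibility of the inclusions $\theta_P$ with the linking system morphisms, \cite[Section 4]{MR2302494}). Precisely, the inclusion $B\langle \varphi(g)\rangle \hookrightarrow \Linp$ is homotopic to the composite $B\langle \varphi(g)\rangle \xrightarrow{B\varphi^{-1}} B\langle g\rangle \to \Linp$ up to the identification $\langle g \rangle \cong \varphi(\langle g\rangle)$. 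Composing with $f$ and using $f|_{B\langle g\rangle} \simeq \ast$ gives $f|_{B\langle\varphi(g)\rangle} \simeq \ast$ directly — no rigidity needed here, only the fact that $\F$-conjugation is visible in $\Linp$. Hence $\ker(f)$ is strongly $\F$-closed. I would close by noting that these two steps together give the statement, and that the only genuinely nontrivial input is the detection/factorization property underpinning Step 1, everything else being formal from the linking-system axioms.
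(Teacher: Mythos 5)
Your strategy is, in its final form, the same as the paper's: the paper's entire proof of this proposition is the single line ``See \cite[Proposition 3.4]{CellFusSys}'', i.e.\ it defers everything to the companion paper on $p$-local finite groups, which is exactly the reduction you settle on at the end of Step 1 (legitimate because $S$ is locally finite, so $\langle g,h\rangle$ is a finite $p$-group) combined with the realization of $\F$-morphisms in $\Linp$ via $[BP,\Linp]\cong\Rep_{\F}(P,S)$ (\cite[Proposition 6.2]{MR2302494}) for Step 2, which is correct as written. One clarification on the point you rightly flag as the main obstacle: the intermediate idea that $f|_{BP}$ ``factors through $B\rho$ for a genuine homomorphism $\rho$'' cannot work for an arbitrary $p$-complete and $\sbz{p}$-null target $Z$ --- there is no group for $\rho$ to land in --- and detection on \emph{all} cyclic subgroups of $P$ is indeed not the statement one needs. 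The lemma actually used in the finite case is the stronger ``generation'' form: if a finite $p$-group $P$ is generated by elements $g$ with $f|_{B\langle g\rangle}\simeq\ast$, then $f|_{BP}\simeq\ast$. This is proved by induction on $|P|$ via Zabrodsky's lemma (the same induction pattern as in Proposition \ref{p:BP Am cellular} of this paper), using that $\mapp(BQ,\Omega Z)\simeq\ast$ for every finite $p$-group $Q$ because $Z$ is $\sbz{p}$-null. With that lemma, $\langle g,h\rangle$ is generated by two elements of $\ker(f)$, so $f$ is null on $B\langle g,h\rangle$ and hence on $B\langle gh\rangle$, giving the subgroup property; the inverse and conjugation cases are formal, as you say.
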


 \begin{proof}
  See \cite[Proposition 3.4]{CellFusSys}.
 \end{proof}

Observe in particular that minimality of $Cl_{\F}(\z /p^m)$ implies that $Cl_{\F}(\z /p^m)\leq \ker(f)$.

 \begin{prop}\label{p:Dwyer p local}
  Let $(S,\F)$ be a $p$-local compact group and $Z$ be a $p$-complete and $\sbz{p}$-null space. Then a map $f \colon \Linp \rightarrow Z$ is null-homotopic if and only if
  $\ker(f) = S$.
 \end{prop}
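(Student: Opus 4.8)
The plan is to prove both implications separately. The forward direction is immediate: if $f \simeq \ast$ then $f|_{B\langle g\rangle}\simeq \ast$ for every $g\in S$, so by definition $\ker(f)=S$. The substance is in the converse, and here I would mimic the Dwyer–Zabrodsky / Notbohm strategy that is already used for $p$-local finite groups. Assume $\ker(f)=S$; I want to conclude $f\simeq\ast$.

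First I would reduce the problem from $\Linp$ to the classifying spaces of the individual subgroups via the homotopy colimit decomposition of Proposition \ref{p:L = hocolim}, namely $|\Lin|\simeq\hocolim_{\OFc}(\tilde B)$ with $\tilde B(P)\simeq BP$. Since $Z$ is $p$-complete and the indexing category $\OFc$ has a cohomological/homotopical finiteness behaving well after $p$-completion (as in \cite{MR2302494}), a map out of $\p{|\Lin|}{p}$ into $Z$ is detected, up to the relevant $\lim^i$ obstruction groups, by its restrictions to the $B\tilde\pi(P)\simeq BP$ for $P$ ranging over $\F$-centric subgroups, together with compatibility on morphisms. So it suffices to show that each composite $BP\to\Linp\xrightarrow{f}Z$ is null-homotopic, and that these null-homotopies can be chosen compatibly.

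The key point for the restriction to a single $BP$ is that $Z$ is $\sbz{p}$-null, hence $BP$-null for every discrete $p$-toral group $P$: indeed $\map_\ast(BP, Z)\simeq\ast$ can be bootstrapped from the case of $B\z/p$ by climbing the central extensions that build up a finite $p$-group, and then taking a suitable (homotopy) colimit / Miller-type argument for the divisible part $(\z/p^\infty)^r$ (this is where Miller's theorem and Lannes' $T$-functor machinery enter, exactly as invoked in the introduction). Given this, the evaluation $\map(BP,Z)\to Z$ is an equivalence, so the component of $f|_{BP}$ in $\map(BP,Z)$ is determined by its value at the basepoint; and the hypothesis $\ker(f)=S$ says precisely that $f|_{B\langle g\rangle}\simeq\ast$ for all $g$, which (running over a generating set of $P$, again using the central-extension filtration and that $[\Sigma B\z/p, Z]_\ast=0$ to kill the indeterminacy) forces $f|_{BP}\simeq\ast$.

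The hard part will be the compatibility, i.e.\ organizing these pointwise null-homotopies into a coherent null-homotopy of the map on the homotopy colimit, which amounts to showing the relevant higher limits $\lim^i_{\OFc}$ of the functor $P\mapsto \pi_i\,\map_\ast(BP,Z)$ vanish (or are concentrated so as not to obstruct). Since $Z$ is $\sbz{p}$-null, these mapping spaces are highly connected/contractible in the relevant range, so the obstruction groups collapse; the technical work is to quote the appropriate version of the Bousfield–Kan spectral sequence for $\p{\hocolim}{p}$ over $\OFc$ (as developed in \cite{MR2302494} and used in \cite{CellFusSys}) and check its input vanishes. I would handle this exactly as in \cite[Proposition 3.4 and its surroundings]{CellFusSys}, adapting the finite-group argument to the discrete $p$-toral setting, the only new ingredient being the $BP$-nullity of $Z$ for $P$ with a nontrivial divisible part, which follows from $Z$ being $\sbz{p}$-null together with the standard fact that $B(\z/p^\infty)$ is built from $B\z/p$ by homotopy colimit.
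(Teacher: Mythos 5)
Your overall architecture (trivial forward direction; for the converse, the homotopy colimit decomposition of $|\Lin|$ over $\OFc$, reduction to the restrictions $f|_{BP}$, and an obstruction-theoretic argument over the orbit category) is the right one, and is essentially the route the paper takes by deferring to \cite[Theorem 3.5]{CellFusSys}. But there is a genuine error at the key step. You claim that $Z$ being $\sbz{p}$-null implies $Z$ is $BP$-null, i.e.\ $\mapp(BP,Z)\simeq\ast$, for every discrete $p$-toral $P$. This is false: $\sbz{p}$-nullity only gives $\Omega\mapp(\bz{p},Z)\simeq\mapp(\sbz{p},Z)\simeq\ast$, i.e.\ that $\mapp(\bz{p},Z)$ (and, via the Zabrodsky-type induction you describe, $\mapp(BP,Z)$) is homotopically \emph{discrete} --- its components are contractible, but its $\pi_0$ need not vanish. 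Indeed, if your claim were true, every map $BP\to Z$ would be null, hence by your own colimit argument every map $\Linp\to Z$ would be null, and the proposition would be simultaneously vacuous and false: take $Z=\Linp$ itself (which is $p$-complete and $\sbz{p}$-null) and $f=\mathrm{id}$, whose Notbohm kernel is not all of $S$ in general. Your own text betrays the problem: you write that the hypothesis $\ker(f)=S$ ``forces'' $f|_{BP}\simeq\ast$, yet under your nullity claim that hypothesis is never needed.

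What is actually required at this step is the nontrivial statement that a map from $BP$ to a $p$-complete $\sbz{p}$-null space is null-homotopic as soon as its restrictions to all cyclic subgroups are null-homotopic. This is Notbohm's kernel theorem (the content of \cite[Theorem 3.5]{CellFusSys} in the finite case); its proof is an induction along a central filtration of the finite $p$-group $P$ using Zabrodsky's lemma, where the homotopy \emph{discreteness} of $\mapp(\bz{p},Z)$ --- not its contractibility --- is exactly what allows one to descend a null-homotopy on a central cyclic subgroup to a factorization through the quotient; the discrete $p$-toral case then requires passing to the colimit over the finite subgroups of $P$. The obstruction-theoretic part of your sketch (vanishing of the higher limits because the relevant components of the mapping spaces are contractible) is fine once this is in place, but as written the central step of your argument rests on a false claim rather than on the argument that actually carries the proposition.
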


 \begin{proof}

  The proof is identical to that of  \cite[Theorem 3.5]{CellFusSys}.
 \end{proof}

\section{Cellularization of classifying spaces of discrete $p$-toral groups}\label{s:CW cl disc p toral gp}
\label{section3}

In this paper we are mainly concerned with cellular approximation with respect to classifying spaces of $p$-torsion groups. It is not surprising then that if the target of these functors are classifying spaces of groups, their maximal $p$-torsion subgroups play a crucial role in the shape of the approximations. For example, it was proved in \cite{MR2351607} that if $S<G$ is a $p$-Sylow subgroup of a finite group, the ${\bz{p}}$-cellularity of $BS$ implies the ${\bz{p}}$-cellularity of $BG$ (assuming $G$ generated by order $p$ elements), and same happens for $S$ the maximal discrete $p$-toral subgroup of a compact Lie group $G$. .

If now we move to the realm of $p$-local compact groups, it is clear from the previous discussion that a necessary step to understand their cellularizations should be a description to some extent of the cellularization of discrete $p$-toral groups. For the case $A={\bz{p}}$, this task was undertaken in \cite[Example 6.16]{CASTFLOR} and it is not difficult to extend this result to
 $\bz{p^m}$-cellularization for $1 \leq m \leq \infty$. Recall that $\Omega_{p^m}P$ is the normal subgroup of $P$ generated by all elements of order less than or equal to $p^m$.


 \begin{prop}\label{p:CW BZpinfty BPinfty}
  Let $P$ be a discrete $p$-toral group with a maximal torus $P_0 \cong (\z{/p^{\infty}})^r$. Then,
  \begin{enumerate}[(a)]
   \item $CW_{\bz{p^{\infty}}}(BP) \simeq BP_0$,
   \item $CW_{\bz{p^m}}(BP) \simeq B \Omega_{p^m} P$.
  \end{enumerate}
 \end{prop}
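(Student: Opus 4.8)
The plan is to apply Chach\'olski's fiber sequence (Theorem~\ref{t:chacholski}) with $A = \bz{p^m}$ and $X = BP$, for which I need two inputs: the homotopy fiber of a suitable map out of $BP$ should be $A$-cellular, and the induced map on $[\bz{p^m},-]_*$ should be trivial. The natural candidate for $X'$ is $B(P/\Omega_{p^m}P)$: the quotient map $P \to P/\Omega_{p^m}P$ gives a fibration $B\Omega_{p^m}P \to BP \to B(P/\Omega_{p^m}P)$. First I would verify that $B\Omega_{p^m}P$ is $\bz{p^m}$-cellular. The group $\Omega_{p^m}P$ is a discrete $p$-toral group generated by elements of order $\le p^m$; its maximal torus $\Omega_{p^m}P_0 \cong (\z{/p^m})^r$ when $m<\infty$ (or $P_0$ when $m=\infty$), and it fits into an extension with finite $p$-group quotient, also generated in the relevant range. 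For a finite $p$-group generated by order-$\le p^m$ elements this is the kind of statement established in \cite{MR2351607}, \cite{MR2823972} (the $\bz{p}$-case) and \cite{CASTFLOR}; I would reduce to that by an induction on the order of the group of components, using Proposition~\ref{p:CW fib} at each stage (a central extension of an $A$-cellular base by an $A$-cellular fiber has $A$-cellular total space once one knows the relevant classifying maps are compatible) together with the observation that $B\z/p^m$ is tautologically $\bz{p^m}$-cellular and $B\z/p^\infty = \hocolim_k B\z/p^k$ is $\bz{p^\infty}$-cellular. For part (a) this gives $B P_0$ directly; for (b) one assembles $B\Omega_{p^m}P$ from the torus part $(\z/p^m)^r$ and the finite component part.

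Next I would check the triviality of $[\bz{p^m}, BP]_* \to [\bz{p^m}, B(P/\Omega_{p^m}P)]_*$. A pointed map $\bz{p^m} \to BP$ is, up to homotopy, $B\alpha$ for a homomorphism $\alpha\colon \z/p^m \to P$ (mapping-space computations of Dwyer--Zabrodsky--Notbohm type, valid for discrete $p$-toral $P$); its image has order dividing $p^m$, hence lands in $\Omega_{p^m}P$, so the composite to $B(P/\Omega_{p^m}P)$ is null. When $m=\infty$ one argues the same way with $\z/p^\infty$ in place of $\z/p^m$, noting that any homomorphism from $\z/p^\infty$ has image inside the maximal torus $P_0 = \Omega_{p^\infty}P$. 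With both hypotheses of Theorem~\ref{t:chacholski} in hand, the conclusion is a fiber sequence $CW_{\bz{p^m}}(BP) \to BP \to P_{\sbz{p^m}}B(P/\Omega_{p^m}P)$.

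Finally I would identify the nullification $P_{\sbz{p^m}}B(P/\Omega_{p^m}P)$ with $B(P/\Omega_{p^m}P)$ itself, i.e.\ show that this classifying space is already $\sbz{p^m}$-null. By construction $P/\Omega_{p^m}P$ is a discrete $p$-toral group \emph{with no elements of order $\le p^m$ other than the identity}: its torus is $(\z/p^\infty)^r$ modulo $(\z/p^m)^r$, which is again $(\z/p^\infty)^r$ but with the $p^m$-torsion quotiented out, and its component group is trivial in the relevant range by construction of $\Omega$. A map $\sbz{p^m} \to B(P/\Omega_{p^m}P)$ corresponds to an element of $H^1$-type data, equivalently to looking at $\map_*(\bz{p^m}, B(P/\Omega_{p^m}P))$; the mapping space computation shows its components are indexed by $\Hom(\z/p^m, P/\Omega_{p^m}P) = \{*\}$ and each is contractible (the centralizer of the trivial subgroup being the whole group, with the $p$-completion behaving well since everything is $p$-toral), so $B(P/\Omega_{p^m}P)$ is $\bz{p^m}$-null, hence $\sbz{p^m}$-null. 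Thus the base of the Chach\'olski fibration is $B(P/\Omega_{p^m}P)$, and exactness identifies $CW_{\bz{p^m}}(BP)$ with the fiber $B\Omega_{p^m}P$; for $m=\infty$ the same reasoning with $P/P_0$ (a finite $p$-group, whose classifying space is $\sbz{p^\infty}$-null since $\map_*(\bz{p^\infty}, BQ) \simeq BQ$ for $Q$ finite by Miller's theorem) yields $CW_{\bz{p^\infty}}(BP) \simeq BP_0$.

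The main obstacle I anticipate is the first step---establishing $\bz{p^m}$-cellularity of $B\Omega_{p^m}P$ in full generality. For $m=1$ this rests on the classification-flavored arguments of \cite{MR2823972}, and extending cleanly to all $m$ and to the discrete $p$-toral (rather than merely finite) setting requires care with the inductive assembly: one must ensure that the $k$-invariants / extension data of $\Omega_{p^m}P$ are themselves built from $\bz{p^m}$'s, which is where Proposition~\ref{p:CW fib} is used but where a naive induction could fail if an intermediate subquotient is not generated by its order-$\le p^m$ elements. Handling the passage from finite approximations $\z/p^k$ to $\z/p^\infty$ compatibly with cellularity (a $\hocolim$ argument) is the other delicate point, though the closure of cellular classes under homotopy colimits makes it routine once set up correctly.
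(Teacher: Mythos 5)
Your overall strategy (Chach\'olski's fibration with $X'=B(P/\Omega_{p^m}P)$) could in principle be made to work, but as written it contains two errors, one of which is the crux of the whole proof. The less serious one: your justification that $B(P/\Omega_{p^m}P)$ is $\sbz{p^m}$-null rests on the false claim that $P/\Omega_{p^m}P$ has no elements of order $\le p^m$. Already for $P=\z{/p^{\infty}}$ and finite $m$ one has $P/\Omega_{p^m}P\cong \z{/p^{\infty}}$, which is all torsion of arbitrarily small order; in general the torus survives the quotient, so $\Hom(\z{/p^m},P/\Omega_{p^m}P)$ is far from trivial and $B(P/\Omega_{p^m}P)$ is \emph{not} $\bz{p^m}$-null. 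The $\sbz{p^m}$-nullity you actually need is nevertheless true, for an easier reason: $B(P/\Omega_{p^m}P)$ is aspherical, so its loop space is homotopy discrete and $\mapp(\sbz{p^m},BQ)\simeq\mapp(\bz{p^m},\Omega BQ)\simeq\ast$ for any discrete group $Q$. So this step is repairable.

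The genuine gap is your argument for the key input, namely that $B\Omega_{p^m}P$ is $\bz{p^m}$-cellular (Chach\'olski's theorem requires the fibre of $X\to X'$ to be $A$-cellular, so you cannot avoid this). You invoke Proposition \ref{p:CW fib} to propagate cellularity from fibre and base to the total space of an extension, but that proposition goes the other way (it concludes cellularity of the \emph{base}), and the implication you want is false in general: in $B\z{/p}\to B\z{/p^2}\to B\z{/p}$ the fibre and base are $\bz{p}$-cellular while the total space is not, since $CW_{\bz{p}}(B\z{/p^2})\simeq B\z{/p}$. So the inductive assembly you sketch does not go through, and this is precisely the point you flag as the main obstacle. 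The paper sidesteps it by quoting \cite[Corollary 2.5]{MR2262843} — $BG$ is $\bz{p^m}$-cellular whenever $G$ is generated by its elements of order at most $p^m$ — and then dispenses with Chach\'olski's theorem entirely: since $\mapp(\bz{p^m},BP)\simeq\Hom(\z{/p^m},P)=\Hom(\z{/p^m},\Omega_{p^m}P)\simeq\mapp(\bz{p^m},B\Omega_{p^m}P)$, the inclusion $B\Omega_{p^m}P\to BP$ is a $\bz{p^m}$-equivalence, hence induces an equivalence of cellularizations, and the cellularization of the source is the source itself; part (a) is handled the same way using the fibration $BP_0\to BP\to B\pi$ and the $\bz{p^{\infty}}$-nullity of $B\pi$ for $\pi$ finite. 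You should either import that cited cellularity result or supply an actual proof of it; without it your argument is incomplete.
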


 \begin{proof}
  \begin{enumerate}[(a)]
   \item
    Consider the homotopy fibration
    $
     \xymatrix@1@C=1.5em{BP_0 \ar[r]^-f & BP \ar[r] & B\pi}.
    $ where $\pi = P / P_0$ is the group of components of $P$.
    Since $\mapp(\bz{p^{\infty}},B\pi) \simeq \Hom(\z{/p^{\infty}}, \pi) = \{ e \}$, $B\pi$ is $\bz{p^{\infty}}$-null. Hence, $f$ is a $\bz{p^{\infty}}$-equivalence and so
    $CW_{\bz{p^{\infty}}}(BP) \simeq CW_{\bz{p^{\infty}}}(BP_0) \simeq BP_0$.
   \item
    The map $B \Omega_{p^m}(P) \rightarrow BP$ induced by the inclusion is a $\bz{p^m}$-equivalence since
    $
     \mapp(\bz{p^m}, BP) \simeq \Hom (\z{/p^m},P) = \Hom (\z{/p^m},\Omega_{p^m} P) \simeq \mapp(\bz{p^m}, B \Omega_{p^m}(P)),
    $
    and finally $B \Omega_{p^m} P$ is $\bz{p^m}$-cellular according to \cite[Corollary 2.5]{MR2262843}.
  \end{enumerate}
 \end{proof}

 The main goal of this section is to show that, given discrete $p$-toral group $P$, there exists an integer $m_0 \geq 0$  (which depends on $P$), such that $BP$ is cellular with respect to $\bz{p^{\infty}} \times \bz{p^m}$ for all $m \geq m_0$ (and then with respect to $\bz{p^{\infty}} \vee \bz{p^m}$, see Remark \ref{r:wedgevsproduct}).

Let $\pi$ be a finite group and $x \in \pi$ an element of order $|x| \leq p^m$. We denote by $\alpha_x \colon \z{/p^m} \rightarrow \pi$ the homomorphism defined by
 $\alpha_x(1) = x$, and by $f_x \colon \bz{p^m} \rightarrow B\pi$ the induced map. To establish the cellularity of $BP$, we need to understand before how maps from  $\bz{p^m}$ to the classifying space of the group of components lift to the classifying space of the $p$-toral group.


 \begin{lemma}\label{l:BZpm lifts BPinf}
  Let $P$ be a discrete $p$-toral group, $P_0 \cong (\z{/p^{\infty}})^r$ a maximal torus of $P$ and $\pi = P/P_0$ the group of components. For any element $x\in \pi$,
  there is an integer $m_x \geq 0$ and a map
  $\tilde{f_{x}} \colon \bz{p^{m_x}} \rightarrow BP$ such that the following diagram
  $$
   \xymatrix{                                                      & BP \ar[d]^-{Bpr}\\
             \bz{p^{m_x}} \ar[r]_-{f_{x}} \ar[ur]^-{\tilde{f_{x}}} & B\pi,
            }
  $$
  is commutative up to homotopy.
 \end{lemma}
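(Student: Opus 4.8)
The plan is to translate the lifting problem into group theory through the classical identification of maps out of classifying spaces of (discrete) groups. Since $\bz{p^{m}}\simeq K(\z{/p^m},1)$ and $BP\simeq K(P,1)$, pointed homotopy classes of maps $\bz{p^m}\to BP$ are in natural bijection with $\Hom(\z{/p^m},P)$, and likewise $[\bz{p^m},B\pi]_*\cong\Hom(\z{/p^m},\pi)$; under these bijections, composition with $Bpr$ corresponds to composition with $pr$. Hence it is enough to produce, for each $x\in\pi$, an integer $m_x\ge 0$ with $|x|\le p^{m_x}$ together with a homomorphism $\tilde\alpha_x\colon\z{/p^{m_x}}\to P$ satisfying $pr\circ\tilde\alpha_x=\alpha_x$; then $\tilde f_x:=B\tilde\alpha_x$ solves the problem, and in fact the square will commute strictly, not just up to homotopy.

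First I would record that every element of $P$ has finite $p$-power order. This is immediate from the defining extension $1\to P_0\to P\to\pi\to 1$: since $P_0\cong(\z{/p^{\infty}})^r$ is $p$-torsion and $\pi$ is a finite $p$-group, an element $g\in P$ whose image in $\pi$ has order $p^a$ satisfies $g^{p^a}\in P_0$, and $g^{p^a}$ has some finite order $p^b$, so $|g|$ divides $p^{a+b}$ (compare \cite[Proposition 1.2]{MR1992826}, which states that discrete $p$-toral groups are artinian).

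The construction is then immediate. Choose any preimage $\hat x\in P$ of $x$ under $pr$ and put $p^{m_x}:=|\hat x|$, which is a power of $p$ by the previous step. Because $pr(\hat x)=x$, the order of $x$ divides $p^{m_x}$, so $\alpha_x\colon\z{/p^{m_x}}\to\pi$ (hence the map $f_x$) is defined; and since $\hat x^{\,p^{m_x}}=e$ there is a well-defined homomorphism $\tilde\alpha_x\colon\z{/p^{m_x}}\to P$ with $\tilde\alpha_x(1)=\hat x$. As $pr\circ\tilde\alpha_x$ and $\alpha_x$ are homomorphisms out of a cyclic group that agree on the chosen generator, they are equal; applying the functor $B$ and using $B(pr\circ\tilde\alpha_x)=Bpr\circ B\tilde\alpha_x$ gives $Bpr\circ\tilde f_x=f_x$, so the diagram commutes on the nose, a fortiori up to homotopy.

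I do not expect a genuine obstacle here. The only point worth flagging is that $\alpha_x$ need not lift through $pr$ for the smallest $m$ with $|x|\le p^m$, because a preimage of $x$ may have order strictly larger than that of $x$; this is dealt with for free by taking $p^{m_x}$ to be the order of a chosen preimage rather than the order of $x$ itself. No computation is needed, and the argument is insensitive to the particular choice of preimage.
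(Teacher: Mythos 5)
Your proposal is correct and follows the same route as the paper: choose a preimage $g\in pr^{-1}(x)$, note that $\langle g\rangle$ is a finite cyclic $p$-group (the paper cites that $P$ is a locally finite $p$-group, while you derive this directly from the extension $1\to P_0\to P\to\pi\to 1$), and take $\tilde f_x$ to be the map induced by $1\mapsto g$. The extra remarks about strict commutativity and the possible need for $m_x$ to exceed the order of $x$ are accurate but not essential.
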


 \begin{proof}
  Let $x \in \pi$ and $g \in pr^{-1}(x)$. If $Q = \langle q \rangle \leq P$, then $Q \cong \z{/p^{m_x}}$ for certain $m_x \geq 0$ since $P$ is a locally finite $p$-group
  (see \cite[Proposition 1.2]{MR2302494}). Now if $\beta_x \colon \z{/p^{m_x}}\rightarrow P$ is the homomorphism defined by $\beta_x(1) = g$, then the induced map $\tilde{f_x} \colon \bz{p^{m_x}} \rightarrow BP$ verifies the statement.
 \end{proof}


 Recall that for $m\geq 0$, the wedge $B\z /p^{\infty}\times B\z /p^m$ will be denoted $B_m$.

 \begin{prop}\label{p:BP Am cellular}
  Let $P$ be a discrete $p$-toral group. There is a non-negative integer $m_0$ such that $BP$ is $\Am$-cellular for all $m \geq m_0$.
 \end{prop}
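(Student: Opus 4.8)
The plan is to argue by induction on the order of the group of components $\pi=P/P_0$, reducing through a chief series to extensions with quotient $\z{/p}$ and handling those via homotopy orbit spaces. Write $A=\Am$ throughout. Two elementary observations will be used repeatedly. First, a retract of an $A$-cellular space is $A$-cellular (a retract of $Y$ is the mapping telescope of an idempotent self-map of $Y$, hence a pointed homotopy colimit of copies of $Y$); since $\bz{p^{\infty}}$ and $\bz{p^m}$ are retracts of $\Am=\bz{p^{\infty}}\times\bz{p^m}$, every $\bz{p^{\infty}}$-cellular and every $\bz{p^m}$-cellular space is $\Am$-cellular. Second, if $U\to V\to W$ is a cofibre sequence of pointed spaces with $U$ and $W$ both $A$-cellular, then $V$ is $A$-cellular: applying $\mapp(-,Z)$ turns the sequence into a fibration sequence $\mapp(W,Z)\to\mapp(V,Z)\to\mapp(U,Z)$, so if $Z\to Z'$ induces a weak equivalence on $\mapp(A,-)$ it induces weak equivalences on $\mapp(U,-)$ and $\mapp(W,-)$ (also on $\mapp(\Sigma U,-),\mapp(\Sigma W,-)$, since suspensions of $A$-cellular spaces are $A$-cellular), hence, comparing the two fibration sequences, on $\mapp(V,-)$; then $V$ is $A$-cellular by Chach\'olski's recognition criterion.

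Since $\pi$ is a finite $p$-group it admits a chief series $1=\pi_0\trianglelefteq\pi_1\trianglelefteq\cdots\trianglelefteq\pi_l=\pi$ with $\pi_{j+1}/\pi_j\cong\z{/p}$. Writing $Q_j\le P$ for the preimage of $\pi_j$, we obtain $P_0=Q_0\trianglelefteq Q_1\trianglelefteq\cdots\trianglelefteq Q_l=P$, where each $Q_j$ is a discrete $p$-toral group with maximal torus $P_0$ and $Q_{j+1}/Q_j\cong\z{/p}$. For each $j$ choose $g_j\in Q_{j+1}$ lifting a generator of $Q_{j+1}/Q_j$; as $P$ is locally finite, $\langle g_j\rangle\cong\z{/p^{s_j}}$ for some $s_j\ge 1$. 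Set $m_0=\max_j s_j$ (with $m_0=0$ when $\pi$ is trivial). I claim $BQ_j$ is $\Am$-cellular for every $m\ge m_0$, by induction on $j$. The base case is Proposition~\ref{p:CW BZpinfty BPinfty}(a): as $P_0$ is connected, $CW_{\bz{p^{\infty}}}(BP_0)\simeq BP_0$, so $BP_0$ is $\bz{p^{\infty}}$-cellular, hence $\Am$-cellular.

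For the inductive step fix $m\ge m_0$, assume $BQ:=BQ_j$ is $\Am$-cellular, and put $G=Q_{j+1}$, $g=g_j$, $C=\langle g\rangle\cong\z{/p^{s}}$ with $s=s_j\le m$. Conjugation by $g$ makes $Q$ a $C$-group; I would form $G'=Q\rtimes C$ and the surjection $\mu\colon G'\to G$, $(q,c^i)\mapsto qg^i$, whose kernel is $\{((g^p)^{-i},(c^p)^i):i\in\z{}\}\cong\z{/p^{s-1}}$ (trivial if $s=1$), yielding a fibration $B(\z{/p^{s-1}})\to BG'\to BG$. Since $1\to Q\to G'\to C\to 1$ is split, $BG'\simeq(BQ)_{hC}$, the homotopy orbit space of the (basepoint-fixing) $C$-action on $BQ$, and there is a cofibre sequence $BC\to(BQ)_{hC}\to EC_+\wedge_C BQ$. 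Now $BC=\bz{p^s}$ and $B(\z{/p^{s-1}})=\bz{p^{s-1}}$ are $\bz{p^m}$-cellular by Proposition~\ref{p:CW BZpinfty BPinfty}(b) (since $s\le m$ gives $\Omega_{p^m}(\z{/p^s})=\z{/p^s}$ and $\Omega_{p^m}(\z{/p^{s-1}})=\z{/p^{s-1}}$), hence $\Am$-cellular; and choosing a free $C$-CW structure on $EC$ exhibits $EC_+\wedge_C BQ$ as an iterated pointed homotopy colimit of copies of $(D^n)_+\wedge BQ\simeq BQ$, hence as an $\Am$-cellular space. By the second observation, $BG'\simeq(BQ)_{hC}$ is $\Am$-cellular; then Proposition~\ref{p:CW fib} applied to $B(\z{/p^{s-1}})\to BG'\to BG$ gives that $BG=BQ_{j+1}$ is $\Am$-cellular. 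Taking $j=l$ proves the statement.

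The delicate part will be the inductive step and, within it, the homotopy orbit space: recovering $G$ from $G'$ via Proposition~\ref{p:CW fib} and computing $\ker\mu$ are routine, but one cannot display $(BQ)_{hC}$ directly as a homotopy colimit of copies of $BQ$ — the skeleta of $EC$ are disconnected $C$-free cell complexes, so such a presentation would involve disconnected (hence non-$A$-cellular) pieces. This forces one to work with the reduced orbit space $EC_+\wedge_C BQ$, where the smash removes the disconnected part, and then to reassemble $(BQ)_{hC}$ from the cofibre sequence $BC\to(BQ)_{hC}\to EC_+\wedge_C BQ$. Proving cleanly that cellular classes are closed under cofibre sequences with two cellular terms — the second observation above, where one must be slightly careful with the components of the mapping spaces — is thus the technical heart of the argument; everything else is a combination of Propositions~\ref{p:CW BZpinfty BPinfty}, \ref{p:CW fib} and the semidirect-product bookkeeping.
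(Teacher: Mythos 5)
Your route is genuinely different from the paper's. The paper runs Chach\'olski's fibre sequence $CW_{\Am}(BP)\to BP\to P_{\SAm}C$, reduces the nullity of $r$ to a map out of $B\pi$ via Zabrodsky's lemma applied to $BP_0\to BP\to B\pi$, and then inducts on $|\pi|$ (cyclic case first, then a normal subgroup with cyclic quotient), again with Zabrodsky's lemma at each stage. You instead refine $\pi$ by a chief series, model each extension $1\to Q\to Q_{j+1}\to\z{/p}\to1$ by the split extension $Q\rtimes\langle g\rangle$, and descend along the fibration $B\ker\mu\to B(Q\rtimes\langle g\rangle)\to BQ_{j+1}$ using Proposition~\ref{p:CW fib}. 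The group-theoretic input is the same in both arguments (a lift of a generator of the quotient generates a finite cyclic subgroup $\z{/p^{s}}$ of $P$, which is what forces $m_0>0$; this is Lemma~\ref{l:BZpm lifts BPinf}), and your choice $m_0=\max_j s_j$ plays exactly the role of the paper's $m_0$. The semidirect-product bookkeeping, the identification of $\ker\mu\cong\z{/p^{s-1}}$, and the final application of Proposition~\ref{p:CW fib} are all correct.

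The gap is the lemma you yourself flag as the technical heart: that in a cofibration sequence $U\to V\to W$ with $U$ and $W$ both $A$-cellular, $V$ is $A$-cellular. The argument you sketch does not establish it. The restriction $\mapp(V,Z)\to\mapp(U,Z)$ is a fibration whose fibre over the \emph{constant} map is $\mapp(W,Z)$, but its fibres over other components are spaces of extensions, which may be empty, and the Puppe sequence gives no exactness at $[U,Z]_*$ controlling when a map out of $U$ extends over $V$; so the five lemma only handles the components of $\mapp(V,Z)$ sitting over the null component of $\mapp(U,Z)$, and surjectivity of $\pi_0\mapp(V,Z)\to\pi_0\mapp(V,Z')$ is not addressed at all. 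Unless you supply a genuine proof or a precise reference for this closure property, the inductive step is incomplete. Fortunately your argument can be closed without it: the fibration $BQ\to B(Q\rtimes C)\to BC$ is split by the inclusion $C\to Q\rtimes C$, its fibre $BQ$ is $\Am$-cellular by induction and its base $BC=\bz{p^{s}}$ is $\Am$-cellular since $s\leq m$, so $B(Q\rtimes C)$ is $\Am$-cellular by \cite[Theorem~4.7]{MR1408539} (split fibration with cellular fibre and base has cellular total space), a result the paper itself invokes in the $BO(n)$ example. This replaces the entire homotopy-orbit/cofibre analysis of $EC_+\wedge_C BQ$ and makes your induction go through; with that substitution your proof is correct.
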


 \begin{proof}
  We  follow the ideas in \cite[Proposition~2.4]{MR2262843}. Let $m$ be any non-negative integer and consider Chach\'{o}lski's fibre sequence
  $$
   \xymatrix@1@C=1.5em{CW_{\Am} (BP) \ar[r]^-{c} & BP \ar[r]^-r & P_{\SAm} C}.
  $$
  If $r \simeq \ast$, then $BP$ will be $\Am$-cellular because the fibre sequence implies a splitting
  $CW_{\Am} (BP) \simeq BP \times \Omega (P_{\SAm} C)$. Thus, applying the functor $P_{\Am}$, we get the chain of equivalences
  $$
   \ast \simeq P_{\Am} (CW_{\Am} (BP)) \simeq P_{\Am} (BP) \times P_{\Am}(\Omega (P_{\SAm} C)) \simeq P_{\Am} (BP) \times \Omega (P_{\SAm} C).
  $$
  Then $\Omega (P_{\SAm} C) \simeq \ast$ and, since $P_{\SAm} C$ is connected, $P_{\SAm} C \simeq \ast$.

 Next we construct a map $\tilde{r} \colon B\pi \rightarrow P_{\SAm} C$ such that
  $r \simeq \ast$ if and only if $\tilde{r} \simeq \ast$. In order to achieve this, we intend to apply Zabrodsky lemma to the fibration
  $\xymatrix@C=1.5em{BP_0 \ar[r]^-{B\iota} & BP \ar[r]^-{Bpr} & B\pi}$ and the map $r \colon BP \rightarrow P_{\SAm} C$.

  The following properties hold since $BP_0$ is $\bz{p^{\infty}}$-cellular, and then $\Am$-cellular:
  \begin{enumerate}[(a)]
   \item The space $\Omega (P_{\SAm} C) \simeq P_{\Am} (\Omega C)$ is $BP_0$-null, since $BP_0$ is $\Am$-acyclic:
         $$
          \mapp(BP_0, P_{\Am} (\Omega C)) \simeq \mapp(P_{\Am} (BP_0), P_{\Am} (\Omega C)) \simeq \ast.
         $$
   \item Since $BP_0$ is $\Am$-cellular, there exists a map $\tilde{B\iota} \colon BP_0 \rightarrow CW_{\Am} (BP)$ such that the following diagram is commutative up to homotopy
         $$
          \xymatrix{                          & BP_0 \ar[d]^-{B\iota} \ar[dl]_-{\tilde{B\iota}} \ar[dr]^-{r \circ B\iota} & \\
                    CW_{\Am} (BP) \ar[r]^-{c} & BP                                              \ar[r]^-r                 & P_{\SAm} C
                   }
         $$
         and therefore $r \circ B\iota \simeq r \circ c \circ B\iota \simeq \ast$.
  \end{enumerate}
  The hypotheses of Zabrodsky's Lemma are then satisfied and this implies the existence of a map $\tilde{r} \colon B\pi \rightarrow P_{\SAm} C$ such that
  $$
   \xymatrix{
             BP   \ar[d]^-{Bpr} \ar[r]^-r & P_{\SAm} C \\
             B\pi \ar[ur]_-{\tilde{r}}    &
             }
  $$
  is a commutative diagram (up to homotopy) and, moreover, $r \simeq \ast$ if and only if $\tilde{r} \simeq \ast$.

  \medskip

  We are left to show that there exists $m_0$ such that for all $m\geq m_0$, $\tilde{r}$ is null-homotopic. Assume first that $\pi \cong \z{/p^n}$. By Lemma \ref{l:BZpm lifts BPinf}, there exists $m_0 \geq 0$ and a
  map $\tilde{f} \colon \bz{p^{m_0}} \rightarrow BP$ such that $Bpr \circ \tilde{f} =  B\alpha$, where $\alpha(1)$ is a generator of $\pi \cong \z{/p^n}$. Moreover, since
  $\bz{p^{m_0}}$ is $\An{m_0}$-cellular there is a map $\bar{f} \colon \bz{p^{m_0}} \rightarrow CW_{\An{m_0}} (BP)$ such that the following diagram
  $$
   \xymatrix{CW_{\An{m_0}} (BP)                                      \ar[r]^-{c} &    BP    \ar[r]^-{r} \ar[d]^-{Bpr} & P_{\SAn{m_0}} C \\
                 \bz{p^{m_0}} \ar[u]^-{\bar{f}} \ar[ur]^-{\tilde{f}} \ar[r]^-{B\alpha} & \bz{p^n} \ar[ur]_-{\tilde{r}}      &
             }
  $$
  is homotopy commutative.  If the homotopy fibration $\bz{p^{m_0-n}}\rightarrow \bz{p^{m_0}}\rightarrow \bz{p^n}$ and the map $r\circ \tilde{f}\colon \bz{p^{m_0}}\rightarrow P_{\SAn{m_0}} C$ satisfy the hypotheses of Zabrodsky's Lemma then $\tilde{r}\simeq *$ if and only if $r\circ \tilde{f}\simeq *$, and this holds since $r\circ \tilde{f}\simeq r\circ c\circ \bar{f}\simeq *$. Let us check the hypotheses. First, $r\circ \tilde{f}$ restricted to $\bz{p^{m_0-n}}$ is null-homotopic since $r\circ \tilde{f}\simeq \bar{r}\circ B\alpha$. Second, $$\map_*(\bz{p^{m_0-n}}, \Omega P_{\SAn{m_0}} C)\simeq \map_*(\bz{p^{m_0-n}}, P_{\An{m_0}} \Omega C)\simeq *$$ since $\bz{p^{m_0-n}}$ is $\bz{p}$-acyclic, hence $\An{m_0}$-acyclic.

  We will proceed to the general case by induction on the order of $\pi$. Let $\{x_1, \ldots, x_n\}$ be a minimal set of generators of $\pi$ and let $H \trianglelefteq \pi$ the normal subgroup generated by $x_1, \ldots, x_{n-1}$ and its conjugates by
  powers of $x_n$. Since $\pi$ is a finite $p$-group, it is nilpotent, and there is a short exact sequence
  $$
   \xymatrix@1{0 \ar[r] & H \ar[r]^-j & \pi \ar[r]^-k & \z{/p^a} \ar[r] & 0},
  $$
  where $\z{/p^a}$ is generated by the image of $x_n$. Taking the pull-back of the projection $Bpr$ along $Bj$, we obtain the diagram
  $$
   \xymatrix{ BP_0 \ar@{=}[r] \ar[d]      & BP_0 \ar[d]                  &              \\
              BP'  \ar[d]^{pr'} \ar[r]^-h & BP \ar[d]^-{Bpr} \ar[r]^-{r} & P_{\SAm} (C) \\
              BH   \ar[r]^-{Bj}           & B\pi \ar[ur]^-{\tilde{r}}    &
             }
  $$
  where $BP'$ is also the classifying space of a discrete $p$-toral group. By induction, there exists $m'_0$ such that, for all $m\geq m_0'$, $BP'$ is $\An{m}$-cellular, hence $r \circ h \simeq \ast$. Therefore $\tilde{r} \circ Bj \simeq \ast$ by Zabrodsky Lemma applied to the left fibration. Note that $BH$ is also $\An{m}$-cellular by Proposition \ref{p:CW fib}.

  Consider now the diagram
  $$
   \xymatrix{ B(\langle x_n \rangle \cap H) \ar[d] \ar[r] & BH \ar[d]^-{Bj} \ar[dr]^-{\tilde{r} \circ Bj \simeq \ast} &            \\
              B(\langle x_n \rangle)        \ar[d] \ar[r] & B\pi \ar[d]     \ar[r]^-{\tilde{r}}                       & P_{\SAm} C. \\
              \bz{p^a}                      \ar@{=}[r]    & \bz{p^a}        \ar@{-->}[ur]^-{r'}                       &
             }
  $$
  We check again that we are in the position of applying Zabrodsky's lemma to the vertical fibrations since, if $G$ is a finite $p$-group, $$\map_*(BG,\Omega P_{\SAm} C)\simeq \map_*(BG, P_{\Am} \Omega C)\simeq *$$ because $BG$ is $\bz{p}$-acyclic (hence $\Am$-acyclic).

  Then there exists a map $r' \colon \bz{p^a} \rightarrow P_{\SAn{m_0}} C$ making the previous diagram commutative and such that $\tilde{r} \simeq \ast$ if and only if
  $r' \simeq \ast$. Or, applied this time to the left fibration, $\tilde{r} |_{B(\langle x_n \rangle)} \simeq \ast$ if and only if $r' \simeq \ast$.
  But now, the same argument use for the situation in which the group of components is cyclic shows that there exists $m_n$ such that for all $m\geq m_n$ with $\tilde{r} |_{B(\langle x_n \rangle)} \simeq \ast$. Now, just let $m_0$ in the theorem to be the maximum between $m_0'$ and $m_n$.
 \end{proof}

\section{Cellularization of classifying spaces of $p$-local compact groups}\label{s:CW clas p loc comp gp}
\label{section4}

In this section we state the main general results about cellularization of $p$-local compact groups. The general pattern will be to present the cellular approximation of the classifying space as the fibre of the rationalization. The role of the Notbohm kernel will be crucial, as this description of the approximation works nicely when this kernel coincides with the $p$-Sylow subgroup of the $p$-local compact group. In this case, the cellularity of $\Linp$ is established up to $p$-completion.

\begin{defi}\label{conditionS}
Let $B$ a connected space. We will say that a space $X$ is $CW_B$-good if $\pi_1(X)$ and $H_2(X;\mathbb Z)$ are finite, and there is a connected $B$-cellular space $B'$ and a map $\varphi \colon B' \rightarrow X$ such that:
\begin{enumerate}[B1.]
\item  $\varphi_{\ast} \colon [B, B']_{\ast} \rightarrow [B,X]_{\ast}$ is surjective,
\item  {\color{black} The abelianization $\pi_1(B')_{ab}$ of $\pi_1(B')$} is a torsion virtually $p$-divisible group,
\item  $\pi_1(\varphi)$ is an epimorphism.
\end{enumerate}
\end{defi}

{\color{black} Observe in particular that B3 always holds when $X$ is 1-connected.}

 \begin{prop}\label{p:CW phi}
  Let $X$ be a $p$-complete $CW_B$-good space. Let $C$ be the homotopy cofibre of $\varphi\colon B'\rightarrow X$ as in Definition \ref{conditionS}.  There is a homotopy fibre sequence  $$\p{CW_B(X)}{p}\stackrel{\p{c}{p}}{\rightarrow} X \stackrel{\p{r}{p}}{\rightarrow} \p{(P_{\Sigma B}(C))}{p}.$$
  The fundamental group  $\pi_1(\p{CW_B(X)}{p})$ is a finite $p$-group. Moreover if $\p{r}{p}$ is null-homotopic, then the $p$-completion $\p{c}{p}$ of the augmentation $c:CW_B(X)\rightarrow X$ is an equivalence.
 \end{prop}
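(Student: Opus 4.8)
The plan is to read the homotopy fibre sequence off Chach\'olski's Theorem~\ref{t:chacholski} before $p$-completing, to $p$-complete via the fibre lemma, and then to deduce the two remaining assertions from the long exact sequence together with the formal behaviour of nullification; the finiteness hypotheses on $X$ and condition~B2 enter only at the end. First I would apply Theorem~\ref{t:chacholski} with $A=B$ and $X'=C=\hocofib(\varphi)$, the variant of Chach\'olski's construction in which the universal wedge $\bigvee_{[B,X]_\ast}B$ is replaced by the $B$-cellular space $B'$. Its two hypotheses hold: the map $[B,X]_\ast\to[B,C]_\ast$ is trivial because, by~B1, every class of $[B,X]_\ast$ is $\varphi_\ast(h)$ for some $h\in[B,B']_\ast$ and hence maps into $[B,C]_\ast$ through the null composite $B'\xrightarrow{\varphi}X\to C$; and the homotopy fibre of $X\to C$ is $B$-cellular, because the homotopy fibre of the collapse onto the homotopy cofibre of a map out of the connected space $B'$ is $B'$-cellular (as in \cite[Section~20]{MR1408539}) and $B<<B'$ by hypothesis, so the cellular inequality is transitive. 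Theorem~\ref{t:chacholski} then yields a principal homotopy fibre sequence $CW_B(X)\xrightarrow{c}X\xrightarrow{r}P_{\Sigma B}(C)$.

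Next I would $p$-complete it. Since $\pi_1(\varphi)$ is surjective by~B3, van~Kampen's theorem makes $C$, and hence $P_{\Sigma B}(C)$, simply connected, so the fundamental group of the base acts nilpotently on the homology of the fibre and the Bousfield--Kan fibre lemma \cite{MR0365573} produces the homotopy fibre sequence $\p{CW_B(X)}{p}\xrightarrow{\p{c}{p}}X\xrightarrow{\p{r}{p}}\p{(P_{\Sigma B}(C))}{p}$, using $X=\p{X}{p}$. From its long exact sequence, and simple connectivity of the base, $\pi_1(\p{CW_B(X)}{p})$ is an extension of $\pi_1(X)$ by a quotient of $\pi_2(\p{(P_{\Sigma B}(C))}{p})$. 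Now $\pi_1(X)$ is a finite $p$-group, being the fundamental group of a $p$-complete space with finite $\pi_1$. For the second term I would use the homology exact sequence of the cofibration $B'\xrightarrow{\varphi}X\to C$, which exhibits $H_2(C;\z{})$ as an extension of a subgroup of $\pi_1(B')_{ab}=H_1(B';\z{})$ by a quotient of $H_2(X;\z{})$: by~B2 the first is torsion virtually $p$-divisible, by hypothesis the second is finite, and after $\Sigma B$-nullification and $p$-completion this forces $\pi_2(\p{(P_{\Sigma B}(C))}{p})$ --- and hence $\pi_1(\p{CW_B(X)}{p})$ --- to be a finite $p$-group.

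Finally, suppose $\p{r}{p}\simeq\ast$. Then $\p{CW_B(X)}{p}$ is the homotopy fibre of a nullhomotopic map, so $\p{CW_B(X)}{p}\simeq X\times\Omega\p{(P_{\Sigma B}(C))}{p}$ with $\p{c}{p}$ the projection, and it suffices to prove $\p{(P_{\Sigma B}(C))}{p}\simeq\ast$. Here I would transport to the $p$-complete setting the argument valid before completion: $CW_B(X)$ is $B$-cellular, hence $B$-acyclic, and since $\mapp(B,CW_B(X))\simeq\mapp(B,X)$ is already $p$-complete (by Lannes's computation of mapping spaces out of classifying spaces of $p$-torsion groups, $B$ being such a space), the completion map $CW_B(X)\to\p{CW_B(X)}{p}$ is a $\mapp(B,-)$-equivalence, so $\p{CW_B(X)}{p}$ is still $B$-acyclic; likewise $\Omega\p{(P_{\Sigma B}(C))}{p}$ is $B$-null, because $P_{\Sigma B}(C)$ is $\Sigma B$-null. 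Applying $P_B$ to the splitting then gives $\ast\simeq P_B(X)\times\Omega\p{(P_{\Sigma B}(C))}{p}$, hence $\Omega\p{(P_{\Sigma B}(C))}{p}\simeq\ast$, hence $\p{(P_{\Sigma B}(C))}{p}\simeq\ast$ and $\p{c}{p}$ is an equivalence.

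The first two steps are essentially formal; the main obstacle is the finiteness of $\pi_2(\p{(P_{\Sigma B}(C))}{p})$ and the two $\mapp(B,-)$-statements of the last step. Since $\Sigma B$-nullification is not homologically transparent, one cannot simply transport $H_2(C;\z{})$ through it; one must use that $B$ is built from a $p$-torsion classifying space, so that nullifying and then $p$-completing kills the divisible contribution of $\pi_1(B')_{ab}$ while the finiteness hypotheses on $X$ keep the surviving part finite. That is where the genuine homotopy-theoretic input is needed.
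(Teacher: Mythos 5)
Your strategy coincides with the paper's at every stage: Chach\'olski's theorem applied to $\varphi\colon B'\to X$ (with B1 giving triviality of $[B,X]_*\to[B,C]_*$ and the cellular inequality $B<<B'$ giving cellularity of the fibre), $p$-completion of the fibration via $1$-connectivity of $C$ coming from B3, the long exact sequence for $\pi_1$, and the splitting-plus-$P_B$ argument at the end. However, the two points you yourself flag as ``where the genuine input is needed'' are left open, and one of them is bridged by an invalid inference. For the finiteness of $\pi_2(\p{(P_{\Sigma B}C)}{p})$ you correctly assemble $H_2(C;\z{})$ from B2 and the finiteness of $H_2(X;\z{})$, but you then assert that nullification and completion ``force'' the conclusion. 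The missing ingredient is an \emph{epimorphism} $H_2(C;\z{})\twoheadrightarrow H_2(P_{\Sigma B}C;\z{})\cong\pi_2(P_{\Sigma B}C)$ for the $1$-connected space $C$ (the paper quotes \cite[Proposition 3.2]{CellInfEsp} for this); without it nothing bounds $\pi_2(P_{\Sigma B}C)$ at all. Granting it, one gets $\pi_2(P_{\Sigma B}C)\cong(\z{/p^{\infty}})^{n}\times H$ with $H$ finite, and the completion is then computed algebraically via $\pi_2(\p{(P_{\Sigma B}C)}{p})\cong\Ext(\z{/p^{\infty}},\pi_2(P_{\Sigma B}C))$, using $\Ext(\z{/p^{\infty}},\z{/p^{\infty}})=0$ and $\Ext(\z{/p^{\infty}},H)\cong\hat{\z{}}_p\otimes H$ finite. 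This is the step that actually kills the divisible contribution of $\pi_1(B')_{ab}$.

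In the final step, the inference ``the completion map is a $\mapp(B,-)$-equivalence, hence $\p{CW_B(X)}{p}$ is still $B$-acyclic'' does not hold: $B$-acyclicity means $P_B(-)\simeq\ast$, a property neither detected nor preserved by $\mapp(B,-)$-equivalences (for any nontrivial $B$-null space $Z$, the map $\ast\to Z$ is a $\mapp(B,-)$-equivalence out of a $B$-acyclic space). Similarly, the $B$-nullity of $\Omega\p{(P_{\Sigma B}C)}{p}$ requires an argument, since $p$-completion does not formally preserve $\Sigma B$-nullity. The paper sidesteps both issues by applying $P_B$ to the splitting \emph{before} completing, which gives $P_{B}(\p{CW_{B}(X)}{p}) \simeq P_{B}(X)\times\Omega P_{\Sigma B}(\p{(P_{\Sigma B}C)}{p})$, and then $p$-completing and invoking the commutation $\p{P_B(\p{Y}{p})}{p}\simeq\p{P_B(Y)}{p}$ of \cite[Lemma 3.9]{CASTFLOR}. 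One then only needs the weaker facts $\p{P_B(\p{CW_B(X)}{p})}{p}\simeq\ast$ and $\p{P_{\Sigma B}(\p{(P_{\Sigma B}C)}{p})}{p}\simeq\p{(P_{\Sigma B}C)}{p}$, which follow from $P_B(CW_B(X))\simeq\ast$ and from idempotence of $P_{\Sigma B}$ on the $1$-connected space $P_{\Sigma B}C$. You should either prove the two nullity statements you assert for the completed spaces or reroute the argument through this commutation lemma.
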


 \begin{proof}
 There is a $B$-cellular space $B'$ and a map $\varphi \colon B' \rightarrow X$ satisfying the properties in Definition \ref{conditionS}.
  The first part of the statement follows from \cite[Theorem 20.3]{MR1408539} since $B'$ is $B$-cellular and $\varphi_{\ast}$ is surjective. Since $C$ is $1$-connected (because $\pi_1(\varphi)$ is surjective), the same holds for $P_{\Sigma B}(C)$ (see the proof of \cite[Lemma 5.3]{CellFusSys} for details) and the $p$-completion of Chacholski's fibration
  $$\p{CW_B(X)}{p}\stackrel{\p{c}{p}}{\rightarrow} X \stackrel{\p{r}{p}}{\rightarrow} \p{(P_{\Sigma B}(C))}{p}$$
  is again a homotopy fibre sequence.

  By \cite[Propositions VII.5.1, VII.6.3]{MR0365573}, since $X$ is $p$-complete, $\pi_1 (X)$ is a finite $p$-group.

  Now, following \cite[Proposition 4.2]{CellFusSys} and using
  Proposition~\ref{p:CW phi}, we get the long exact sequence of homotopy groups
  $$
   \ldots \rightarrow \pi_2 (\p{(P_{\Sigma {B}} C)}{p}) \rightarrow \pi_1 (\p{CW_{B}(X)}{p}) \rightarrow \pi_1 (X) \rightarrow \ldots
  $$
  So we must show that $\pi_2(\p{(P_{\Sigma {B}} C)}{p})$ is a finite $p$-group.

  Notice that
  there exists an epimorphism $H_2(C;\z{}) \twoheadrightarrow H_2(P_{\Sigma {B}} C;\z{}) \cong \pi_2(P_{\Sigma {B}} C)$ by \cite[Proposition 3.2]{CellInfEsp}. Moreover, by
  \cite[Proposition 4.2]{CellFusSys}, the group in the left fits in a short exact sequence
  $$
   0 \rightarrow H_2 \rightarrow H_2(C;\z{}) \rightarrow H_1 \rightarrow 0,
  $$
  where $H_2$ is a quotient of $H_2(X; \z{})$, finite by assumption, and $H_1 \subset H_1(B';\z{}) \cong \pi_1(B')_{ab} \cong (\mathbb Z/p^\infty)^r\times K$ where $K$ is a finite group. Then $H_2 (C; \z{}) \cong (\z{/p^{\infty}})^{n'} \times H'$ where $H'$ is a finite group and hence $\pi_2 (P_{\Sigma {B}} C) \cong (\z{/p^{\infty}})^n \times H$, where
  $H$ is a finite group. Furthermore, $P_{\Sigma {B}} C$ is $1$-connected, and then
  \cite[Proposition VI.5.1]{MR0365573} implies an isomorphism:
  $$
   \pi_2 (\p{(P_{\Sigma B} C)}{p}) \cong \Ext(\z{/p^{\infty}}, \pi_2 (P_{\Sigma {B}} C)) \cong \Ext(\z{/p^{\infty}}, (\z{/p^{\infty}})^n \times H).
  $$
  By \cite[Example VI.4.4 (i)]{MR0365573}, $\Ext(\z{/p^{\infty}},H) \cong \hat{\z{}}_p \otimes H$ a finite $p$-group, and by \cite[Example VI.4.2]{MR0365573},
  $\Ext(\z{/p^{\infty}},\z{/p^{\infty}}) \cong 0$. Therefore {\color{black} $\pi_2 (\p{(P_{\Sigma B} C)}{p})$} is a finite $p$-group. So we are done.

  Now assume that $\p{r}{p}$ is null-homotopic. We obtain a splitting
of the fibre  $\p{CW_{B} (X)}{p} \simeq X \times \Omega \p{(P_{\Sigma {B}} C)}{p}$. Applying the functor $P_{B}$ to the previous equivalence, we get
  $$P_{B}(\p{CW_{B} (X)}{p}) \simeq P_{B}(X) \times \Omega P_{\Sigma {B}}(\p{(P_{\Sigma {B}} C)}{p}).$$

  Now we proceed to $p$-complete the previous equivalence.  One can apply   \cite[Lemma 3.9]{CASTFLOR} and then $\p{P_{B}(\p{CW_{B} (X)}{p})}{p} \simeq \p{P_{B}(CW_{B} (X))}{p} \simeq \ast$. On the right, by the same result we have $\p{P_{\Sigma {B}}(\p{(P_{\Sigma {B}} C)}{p})}{p} \simeq \p{P_{\Sigma {B}}(P_{\Sigma {B}} C)}{p} \simeq \p{(P_{\Sigma {B}} C)}{p}$, since $P_{\Sigma {B}} C$ is $1$-connected, and therefore,
  $\p{(P_{\Sigma {B}} C)}{p} \simeq \ast$ and $\p{c}{p}$ is an equivalence.

 \end{proof}

 \begin{remark}
 In fact, in the proof of Proposition \ref{p:CW phi} we have shown that {\color{black} $\pi_2 (P_{\Sigma B} C)$} is a torsion virtually $p$-divisible group and, therefore, the same holds for $\pi_1(CW_B(X))$.

 The proof of Theorem \ref{p:BS cell CW A Xp = Xp} shows that if $\p{r}{p}$ is null-homotopic for $p$-complete $CW_B$-good spaces then, in fact,  $\p{(P_{\Sigma {B}} C)}{p} \simeq \ast$ and $\p{c}{p}$ is an equivalence.
 \end{remark}

 Next we will show that classifying spaces of $p$-local compact groups satisfy the conditions in Definition \ref{conditionS}, except {\color{black} possibly} for the second homology group being finite.

 \begin{prop} \label{p:Linp+H2}
 Let $(S,\mathcal F)$ be the fusion system of a $p$-local compact group and $\Linp$ its classifying space. If $H_2(\Linp;\mathbb Z)$ is finite, then there is a natural number $n_\mathcal F$ such that $\Linp$ is a $p$-complete $CW_{B_n}$-good space for $B_n=B\mathbb Z/p^\infty \times B\mathbb Z/p^n$ and $n\geq n_{\mathcal F}$.
 \end{prop}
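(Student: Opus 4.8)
The plan is to verify, for $X=\Linp$, the conditions B1--B3 of Definition~\ref{conditionS} with $B=B_n$, taking $B'=BS$ together with the canonical map $\theta_S\colon BS\to\Linp$, and letting $n_{\mathcal F}$ be the integer $m_0$ furnished by Proposition~\ref{p:BP Am cellular} applied to $P=S$, so that $BS$ is $B_n$-cellular for all $n\ge n_{\mathcal F}$. I would first record the ambient facts. The space $\Linp=\p{|\mathcal L|}{p}$ is $p$-complete, $H_2(\Linp;\z{})$ is finite by hypothesis, and $\pi_1(\Linp)$ is a finite $p$-group (as in the proof of Proposition~\ref{p:CW phi}), so the standing finiteness requirements of Definition~\ref{conditionS} are met. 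Since $S$ is the maximal discrete $p$-toral subgroup, the only subgroup of $S$ abstractly isomorphic to $S$ is $S$ itself, and $C_S(S)=Z(S)$, so $S$ is $\F$-centric; hence $BS\simeq\tilde B(S)$ is one of the spaces in the homotopy colimit of Proposition~\ref{p:L = hocolim}, which supplies the pointed map $\theta_S\colon BS\to|\mathcal L|\to\Linp$. The space $B'=BS$ is connected, and $B_n$-cellular for every $n\ge n_{\mathcal F}$.

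Condition B2 is a structural computation: $\pi_1(BS)_{ab}\cong S_{ab}$ sits in an extension $0\to D\to S_{ab}\to Q\to 0$ in which $D$ is the image of the maximal torus $S_0\cong(\z{/p^{\infty}})^r$, hence a $p$-divisible group isomorphic to $(\z{/p^{\infty}})^{r'}$ of finite index in $S_{ab}$, and $Q$ is a quotient of the finite $p$-group $S/S_0$; thus $S_{ab}$ is torsion and virtually $p$-divisible. For B3 I would use that $\Linp$ is $\mathbb{F}_p$-good and that $\theta_S^*\colon H^*(\Linp;\mathbb{F}_p)\hookrightarrow H^*(BS;\mathbb{F}_p)$ is the stable elements inclusion (\cite{MR2302494}); over the field $\mathbb{F}_p$ this forces $(\theta_S)_*$ to be surjective on homology, and in degree one this says precisely that $(\theta_S)_*\colon S\to\pi_1(\Linp)$ is onto after passing to Frattini quotients. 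Since $\pi_1(\Linp)$ is a finite $p$-group, the Burnside basis theorem then gives that $(\theta_S)_*$ is surjective, which is B3.

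The crux is B1, namely surjectivity of $(\theta_S)_*\colon[B_n,BS]_*\to[B_n,\Linp]_*$. Put $Q=\z{/p^{\infty}}\times\z{/p^n}$, a discrete $p$-toral group with $B_n\simeq BQ$. I would invoke the description of homotopy classes of maps from classifying spaces of discrete $p$-toral groups into $\Linp$ (\cite{MR2302494}): every free homotopy class $BQ\to\Linp$ is of the form $\theta_S\circ B\rho$ for some group homomorphism $\rho\colon Q\to S$. Given a pointed map $g\colon B_n\to\Linp$, choose such a $\rho$ with $\theta_S\circ B\rho$ freely homotopic to $g$; the pointed classes lying over a fixed free class form a single orbit under the $\pi_1(\Linp)$-action, and by B3 each element of $\pi_1(\Linp)$ is $(\theta_S)_*(s)$ for some $s\in S$, whose action sends $\theta_S\circ B\rho$ to $\theta_S\circ B(c_s\circ\rho)$, again of the required form. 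Hence $g$ is pointed-homotopic to $\theta_S\circ B\rho'$ for some $\rho'\colon Q\to S$, so it lies in the image of $(\theta_S)_*$, and B1 holds for all $n\ge n_{\mathcal F}$ (the bound $n\ge n_{\mathcal F}$ being used only to guarantee $BS$ is $B_n$-cellular). The main obstacle I anticipate is exactly this input — that every map $B(\z{/p^{\infty}}\times\z{/p^n})\to\Linp$ factors through $\theta_S$ via a homomorphism into $S$, i.e. the $\Rep(Q,\F)$-description for $p$-local compact groups with a non-finite source — together with the bookkeeping upgrading it from free to pointed homotopy classes; the rest (B2, B3, the finiteness conditions, and the choice of $n_{\mathcal F}$) is structural or follows by citation.
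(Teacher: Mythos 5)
Your proof is correct in substance but follows a genuinely different route from the paper's. The paper does not take $B'=BS$: it takes $B'=\bigvee_{[f]}(B\mathbb Z/p^\infty)^{r(f)}\times B\mathbb Z/p^n$, indexed by the finite set $[B\mathbb Z/p^n,\Linp]$, where each summand is produced by restricting $f$ to the $B\mathbb Z/p^n$ factor, adjointing the $B\mathbb Z/p^\infty$ factor into $\map(B\mathbb Z/p^n,\Linp)_{f_2}$ --- which is again the classifying space of a $p$-local compact group by \cite{AlexPaper} --- and factoring the adjoint through the maximal torus of that mapping space. That construction only needs the $\Rep$-description of homotopy classes out of the \emph{finite} group $\mathbb Z/p^n$ (\cite[Proposition 6.2]{MR2302494}) plus the mapping-space structure theorem, whereas your argument leans on the bijection $\Rep(Q,\F)\cong[BQ,\Linp]$ for the \emph{infinite} discrete $p$-toral group $Q=\mathbb Z/p^\infty\times\mathbb Z/p^n$; you correctly isolate this as the crux, and it is available in \cite{MR2302494} for an arbitrary discrete $p$-toral source, so your B1 (including the free-to-pointed upgrade via the $\pi_1(\Linp)$-action combined with B3) goes through. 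Your B2 and B3 are fine, though proving B3 via stable elements and the Burnside basis theorem is a detour around the direct citation of \cite[Proposition 4.4]{MR2302494}, which gives the epimorphism $S\rightarrow\pi_1(\Linp)$ outright. The one real loss is quantitative: your $n_{\mathcal F}$ is the $m_0$ of Proposition \ref{p:BP Am cellular}, governed by the cellularity of $BS$, while the paper's $n_{\mathcal F}$ is governed only by the orders of elements of $S$ needed to surject onto $\pi_1(\Linp)$, its $B'$ being $B_n$-cellular for every $n\geq 1$. In particular the paper's proof yields Remark \ref{r:1-connected} (one can take $n_{\mathcal F}=1$ when $\Linp$ is $1$-connected), which is used later in Proposition \ref{p:exotic}, and yours does not.
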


 \begin{proof}
 We have that $\pi_1 (\Linp)$ is a finite $p$-group according to \cite[Theorem B.5]{AlexPaper} and $\Linp$ is $p$-complete (\cite[Proposition 4.4]{MR2302494}).

 Let $[f]\in [B\mathbb Z/p^\infty \times B\mathbb Z/p^n,\Linp]$, {\color{black} for a certain $n\geq 1$ ($n_{\F}$ will be defined a little bit later)}. We consider $f_2\colon B\mathbb Z/p^n\rightarrow \Linp$ the restriction {\color{black} of a representative of $f$} to the second coordinate, and $f_1\colon B\mathbb Z/p^\infty\rightarrow \map(B\mathbb Z/p^n,\Linp)_{f_2}$ the adjoint map of $f$.

 Observe that $\map(B\mathbb Z/p^n,\Linp)_{f_2}$ is again the classifying space of a $p$-local group (see \cite{AlexPaper}) over a discrete $p$-toral group $R$ with a maximal torus (maximal $p$-divisible subgroup $R_0$) of rank $r(f)$.  Then, up to homotopy, the map  $f_1$ factors  $$f_1\colon B\mathbb Z/p^\infty\stackrel{f_1'}{\rightarrow} (B\mathbb Z/p^\infty)^{r(f)}\stackrel{Bi}{\rightarrow} \map(B\mathbb Z/p^n,\Linp)_{f_2}$$ where $(B\mathbb Z/p^\infty)^{r(f)}\rightarrow \map(B\mathbb Z/p^n,\Linp)_{f_2}$ is the inclusion of a maximal torus.

 There is a homotopy commutative diagram,
 $$
   \xymatrix{
   B\mathbb Z/p^{\infty}\times B\mathbb Z/p^n \ar[r]^(0.75){f} \ar[d]_-{f_1} & \Linp \ar@{=}[d] \\
   B(\mathbb Z/p^{\infty})^{r(f)}\times B\mathbb Z/p^n \ar[r]^(0.75){f'} \ar[d]_-{Bi} & \Linp \ar@{=}[d] \\
   \map(B\mathbb Z/p^{n},\Linp)_{f_2}\times B\mathbb Z/p^n \ar[r]^(0.75){ev} & \Linp.
             }
  $$

 Consider $$\varphi\colon  \bigvee_{[f]\in [B\mathbb Z/p^n,\Linp]} (B\mathbb Z/p^\infty)^{r(f)}\times B\mathbb Z/p^n\rightarrow \Linp$$ the evaluation map. We take $B'$ in Definition \ref{conditionS} to be the source space of $\varphi$, {\color{black} and $B=B_n$}.

 The previous remarks imply that Condition B1 in Definition \ref{conditionS} is satisfied. For Condition B2 we need to check that the set $[B\mathbb Z/p^n,\Linp]$ is finite. By \cite[Proposition 6.2]{MR2302494}, there is a surjective map $[B\mathbb Z/p^n,BS]\rightarrow [B\mathbb Z/p^n, \Linp]$ and $[B\mathbb Z/p^n,BS]\cong Rep(\mathbb Z/p^n,S)$ is finite by \cite[Lemma 1.4a]{MR2302494}.

 It remains to check that $\pi_1(\varphi)$ is an epimorphism. By \cite[Proposition 4.4]{MR2302494}, there is an epimorphism $$q\colon S\rightarrow S/T\rightarrow \pi_1(\Linp),$$ being $T$ the maximal torus of $S$. We will show that given $g\in \pi_1(\Linp)$, there is a map $\psi_g\colon B\mathbb Z/p^m\rightarrow \Linp$ such that $g\in \Ima(\pi_1(\psi_g))$. Let $h\in q^{-1}(g)\leq S$ be a preimage of $g$. Then $\langle h \rangle\leq S$ is a finite $p$-group since $S$ is a locally finite $p$-group, that is, $\langle h \rangle\cong \mathbb Z/p^m$ for some $m$ greater or equal  than the order of $g$. The map induced by the inclusion $\psi_g\colon B\mathbb Z/p^m\rightarrow BS\rightarrow \Linp$ induces a homomorphism at the level of fundamental groups which has $g$ in its image.

 Finally, since $\pi_1(\Linp)$ is finite, we can take $n_\mathcal F$ to be the maximum of $m$'s for all $g\in \pi_1(\Linp)$. Then we see that $$\varphi\colon  \bigvee_{[f]\in [B\mathbb Z/p^{n_\mathcal F},\Linp]} (B\mathbb Z/p^\infty)^{r(f)}\times B\mathbb Z/p^{n_\mathcal F}\rightarrow \Linp$$ induces an epimorphism in the fundamental group (note that there are no non-trivial group homomorphisms from $\mathbb Z/p^\infty$ to a finite group).
 \end{proof}

{\color{black} An analogous result is true if we change $B_n$ by $B\mathbb{Z} /p^n$.

 \begin{prop} \label{p:Linp+H2pm}
 Let $(S,\mathcal F)$ be the fusion system of a $p$-local compact group and $\Linp$ its classifying space.
 If $H_2(\Linp;\mathbb Z)$ is finite, then there is a natural number $n_\mathcal F$ such that $\Linp$ is a $p$-complete $CW_{B\mathbb{Z}/p^n}$-good space
 for $n\geq n_{\mathcal F}$.
 \end{prop}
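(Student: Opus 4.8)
The plan is to reproduce the proof of Proposition~\ref{p:Linp+H2} almost word for word, replacing the building blocks $(B\mathbb Z/p^\infty)^{r(f)}\times\bz{p^n}$ by single copies of $\bz{p^n}$. The argument in fact simplifies: since $B=\bz{p^n}$ has no $p$-divisible part, a homotopy class $\bz{p^n}\to\Linp$ carries no rank data, so there is no need to pass through the maximal torus of a mapping space as in the earlier proof, and Condition B1 becomes trivial.

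First I would record the unchanged input. The group $\pi_1(\Linp)$ is a finite $p$-group by \cite[Theorem B.5]{AlexPaper}, the space $\Linp$ is $p$-complete by \cite[Proposition 4.4]{MR2302494}, and $H_2(\Linp;\mathbb Z)$ is finite by hypothesis. Moreover the set $[\bz{p^n},\Linp]$ is finite: by \cite[Proposition 6.2]{MR2302494} it is a quotient of $[\bz{p^n},BS]\cong\Rep(\mathbb Z/p^n,S)$, which is finite by \cite[Lemma~1.4a]{MR2302494}. Next, define $n_{\mathcal F}$ exactly as in Proposition~\ref{p:Linp+H2}: by \cite[Proposition 4.4]{MR2302494} there is an epimorphism $q\colon S\twoheadrightarrow S/T\twoheadrightarrow\pi_1(\Linp)$, and for each $g\in\pi_1(\Linp)$ one picks $h\in q^{-1}(g)$, so that $\langle h\rangle\cong\mathbb Z/p^{m(g)}$ for some $m(g)$ ($\langle h\rangle$ being finite since $S$ is locally finite), and sets $n_{\mathcal F}=\max_g m(g)$, which is finite because $\pi_1(\Linp)$ is finite. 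For $n\ge n_{\mathcal F}$, take as the space $B'$ of Definition~\ref{conditionS} the finite wedge $B'=\bigvee_{[f]\in[\bz{p^n},\Linp]}\bz{p^n}$, with $\varphi\colon B'\to\Linp$ restricting on the $[f]$-th summand to a chosen representative of $f$, and $B=\bz{p^n}$. A finite pointed wedge of copies of $\bz{p^n}$ is connected and $\bz{p^n}$-cellular, so $B'$ is admissible.

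Finally I would verify the three conditions. Condition B1 holds by construction, since precomposing $\varphi$ with the inclusion of the $[f]$-th wedge summand recovers $f$. Condition B2 holds because $\pi_1(B')$ is a finite free product of copies of $\mathbb Z/p^n$, so $\pi_1(B')_{ab}$ is a finite $p$-group, which is in particular torsion and (trivially) virtually $p$-divisible. For B3, given $g\in\pi_1(\Linp)$ consider the composite $\bz{p^n}\to\bz{p^{m(g)}}\to BS\to\Linp$, where the first map is induced by the reduction $\mathbb Z/p^n\twoheadrightarrow\mathbb Z/p^{m(g)}$ (legitimate since $m(g)\le n_{\mathcal F}\le n$) and the second by the inclusion $\langle h\rangle\cong\mathbb Z/p^{m(g)}\hookrightarrow S$; this composite represents a class of $[\bz{p^n},\Linp]$, hence a wedge summand of $B'$, and the image of the induced map on $\pi_1$ contains $q(h)=g$. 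Thus $\pi_1(\varphi)$ is onto, and $\Linp$ is $CW_{\bz{p^n}}$-good for all $n\ge n_{\mathcal F}$. The only point requiring any care is the finiteness of $[\bz{p^n},\Linp]$ — needed so that $\pi_1(B')_{ab}$ is finite rather than merely torsion — but this is precisely the ingredient already used in Proposition~\ref{p:Linp+H2}; with the smaller generator $\bz{p^n}$ in place of $B_n$, everything else is, if anything, easier.
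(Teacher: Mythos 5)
Your proposal is correct and follows essentially the same route as the paper: the paper also takes $B'=\bigvee_{[f]\in[\bz{p^n},\Linp]}\bz{p^n}$, notes B1 is immediate, deduces B2 from finiteness of $[\bz{p^n},\Linp]$ via Seifert--van Kampen, and obtains B3 by the same choice of $n_{\mathcal F}$ as in Proposition \ref{p:Linp+H2}. Your write-up merely makes explicit the precomposition with $B\mathbb{Z}/p^n\to B\mathbb{Z}/p^{m(g)}$ in the verification of B3, which the paper leaves implicit.
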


 \begin{proof}

 In the notation of Definition \ref{conditionS}, $B$ will be now $B\mathbb{Z}/p^n$ and $B'$ will be defined as the wedge $\bigvee_{[f]\in [B\mathbb Z/p^n,\Linp]} B\mathbb{Z}/p^n$ of copies of $B\mathbb{Z}/p^n$. We have already seen that $\pi_1 (\Linp)$ is a finite $p$-group. Condition B1 is clear, and as we have seen in the previous proof that the set $[B\mathbb Z/p^n,\Linp]$ is finite, B2 holds by Seifert-van Kampen theorem. The proof of B3 is identical to the one of the previous result, as it is the way of selecting $n_{\F}$.

 \end{proof}}

\begin{remark}
\label{r:1-connected}

Note that in the previous two results, if $\Linp$ is 1-connected, it can be taken $n_{\F}=1$.

\end{remark}

The next proposition and its corresponding corollaries show that the Notbohm kernel is the appropriate adaptation of the strongly closed subgroup $Cl(S)$ of \cite{MR2351607} to the context of $p$-local compact groups.

 \begin{theorem}\label{p:BS cell CW A Xp = Xp}
  Let $(S,\F)$ be a $p$-local compact group such that $H_2(\Linp;\z{})$ is finite. If $\ker (\p{r}{p}) = S$, then the augmentation map
  $c_{\Linp} \colon CW_{B_m}(\Linp) \rightarrow \Linp$ is a mod $p$-equivalence for $m\geq n_{\mathcal F}$ as in Proposition \ref{p:Linp+H2};
  in particular, this is the case if $BS$ is $B_m$-cellular.
 \end{theorem}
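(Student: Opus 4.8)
The plan is to run $X=\Linp$ through the general machinery of Propositions~\ref{p:Linp+H2} and~\ref{p:CW phi}, which reduces the assertion to the nullity of a single map, and then to extract that nullity from the hypothesis $\ker(\p{r}{p})=S$ via Proposition~\ref{p:Dwyer p local}.

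\textbf{Reduction.} Since $H_2(\Linp;\z{})$ is finite, Proposition~\ref{p:Linp+H2} yields an integer $n_{\mathcal F}$ such that for every $m\geq n_{\mathcal F}$ the space $X=\Linp$ is $p$-complete and $CW_{\Am}$-good; fix such an $m$ (we may assume $m\geq 1$). Let $\varphi\colon B'\to\Linp$ be the corresponding map of Definition~\ref{conditionS} and $C$ its homotopy cofibre. Proposition~\ref{p:CW phi}, applied with $B=\Am$, provides a homotopy fibre sequence
$$\p{CW_{\Am}(\Linp)}{p}\ \xrightarrow{\ \p{c}{p}\ }\ \Linp\ \xrightarrow{\ \p{r}{p}\ }\ Z,\qquad Z:=\p{\bigl(P_{\SAm}C\bigr)}{p},$$
and tells us that once $\p{r}{p}\simeq\ast$ is known, $\p{c}{p}$ is a homotopy equivalence, i.e.\ $c_{\Linp}$ is a mod $p$-equivalence. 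So the whole statement reduces to proving $\p{r}{p}\simeq\ast$.

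\textbf{Admissibility of the target.} To apply the Notbohm-kernel formalism to $\p{r}{p}\colon\Linp\to Z$ we need $Z$ to be $p$-complete and $\sbz{p}$-null. It is $p$-complete by construction. For the nullity: $P_{\SAm}C$ is $\SAm$-null by definition of nullification, and $\sbz{p}$ is $\SAm$-cellular. Indeed, Proposition~\ref{p:CW BZpinfty BPinfty}(b) gives $CW_{\bz{p^m}}(\bz{p})\simeq B\Omega_{p^m}(\z{/p})\simeq\bz{p}$ (here $m\geq 1$), so $\bz{p}$ is $\bz{p^m}$-cellular, hence $\Am$-cellular by Remark~\ref{r:wedgevsproduct}; suspending, $\sbz{p}=\Sigma\bz{p}$ is $\SAm$-cellular. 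Consequently every $\SAm$-null space is $\sbz{p}$-null, so $P_{\SAm}C$ is $\sbz{p}$-null; being $1$-connected, it stays $\sbz{p}$-null after $p$-completion (as already used in the proof of Proposition~\ref{p:CW phi}; cf.\ \cite[Lemma~3.9]{CASTFLOR}). Hence $Z$ is $p$-complete and $\sbz{p}$-null, and $\ker(\p{r}{p})$ is defined.

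\textbf{Conclusion and addendum.} By hypothesis $\ker(\p{r}{p})=S$, so Proposition~\ref{p:Dwyer p local} applied to $f=\p{r}{p}$ gives $\p{r}{p}\simeq\ast$; by the Reduction step, $c_{\Linp}$ is then a mod $p$-equivalence. For the last sentence, suppose $BS$ is $\Am$-cellular. Then $\Linp$ is $\Am$-cellular as well: this is proved exactly as in the finite-group and compact-Lie cases (cf.\ \cite{MR2351607}, \cite{CASTFLOR}; see also \cite{CellFusSys} for $p$-local finite groups), using the presentation $\Linp\simeq\p{|\Lin|}{p}$ with $|\Lin|\simeq\hocolim_{\OFc}\tilde B$. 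Then the augmentation $c_{\Linp}\colon CW_{\Am}(\Linp)\to\Linp$ is a weak equivalence by Proposition~\ref{p:CW def}, a fortiori a mod $p$-equivalence (equivalently, $CW_{\Am}(\Linp)\simeq\Linp$ forces $Z\simeq\ast$ in the fibre sequence above). The one genuinely non-formal point is the admissibility of $Z$: one must ensure that the $\SAm$-nullification occurring in Chach\'olski's fibration, after $p$-completion, remains $\sbz{p}$-null, so that Notbohm's kernel is available and Proposition~\ref{p:Dwyer p local} can be invoked; this hinges on the cellular inequality that $\sbz{p}$ is $\SAm$-cellular (ultimately Proposition~\ref{p:CW BZpinfty BPinfty}(b)) together with the compatibility of nullification with $p$-completion for $1$-connected spaces. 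Granting that, the theorem is an immediate combination of Propositions~\ref{p:Linp+H2}, \ref{p:CW phi} and~\ref{p:Dwyer p local}.
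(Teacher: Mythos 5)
Your treatment of the main implication is correct and follows the paper's own route: reduce via Propositions \ref{p:Linp+H2} and \ref{p:CW phi} to showing $\p{r}{p}\simeq\ast$, and get that from $\ker(\p{r}{p})=S$ via Proposition \ref{p:Dwyer p local}. Your explicit check that the target $\p{(P_{\SAm}C)}{p}$ is $p$-complete and $\sbz{p}$-null (so that the Notbohm kernel is even defined) is a point the paper leaves implicit, and your argument for it --- $\sbz{p}$ is $\SAm$-cellular, hence $\SAm$-null implies $\sbz{p}$-null, and $1$-connectedness lets this survive $p$-completion --- is sound.

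The addendum, however, contains a genuine error. You claim that if $BS$ is $\Am$-cellular then $\Linp$ is $\Am$-cellular, ``as in the finite-group and compact-Lie cases,'' and deduce that $c_{\Linp}$ is a weak equivalence. This is false in general: every $\Am$-cellular space is rationally acyclic (this is exactly \cite[Lemma 2.8]{CASTFLOR}, which the paper uses in Corollary \ref{c:BS cell CW A X fib rac}), whereas $\Linp$ typically is not. Concretely, for $G=S^1$ one has $BS=\bz{p^{\infty}}$, which is $\Am$-cellular for every $m$, yet $\p{(BS^1)}{p}\simeq K(\hat{\z{}}_p,2)$ is not $\Am$-cellular --- its $\Am$-cellularization is $\bz{p^{\infty}}$ (Example \ref{e:S1}). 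This is precisely why the theorem only asserts a mod $p$-equivalence. The correct (and shorter) argument for the last clause is the one the paper gives: if $BS$ is $\Am$-cellular, then the map $BS\rightarrow\Linp$ lifts through the cellular cover $CW_{\Am}(\Linp)\rightarrow\Linp$; hence for every $g\in S$ the composite $B\langle g\rangle\rightarrow BS\rightarrow\Linp\xrightarrow{\p{r}{p}}\p{(P_{\SAm}C)}{p}$ factors through two consecutive maps of the fibre sequence and is null, so $\ker(\p{r}{p})=S$ and one concludes by the main implication. You should replace your addendum by this lifting argument.
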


 \begin{proof}
  As $B_{\mathcal{F}}$ is $CW_{B_m}$-good by Proposition \ref{p:Linp+H2},  Proposition~\ref{p:CW phi} gives an homotopy fibre sequence
  $$
   \xymatrix@1@C=1.25em{CW_{B_m}(\Linp)^\wedge_p \ar[r]^-{c^\wedge_p} & \Linp \ar[r]^-{r^\wedge_p} & \p{(P_{\Sigma {B_m}} C)}{p}}.
  $$
 Since $\ker(\p{r}{p}) = S$, Proposition~\ref{p:Dwyer p local} implies that $\p{r}{p} \simeq \ast$. By Proposition \ref{p:CW phi} we obtain the result.In particular, if $BS$ is $B_m$-cellular, then the map $BS \rightarrow \Linp$ lifts to $CW_{B_m} (\Linp)$ and hence $\ker(\p{r}{p}) = S$.

 \end{proof}

\begin{coro}\label{c:CW mod p}
Let $(S,\F)$ be a $p$-local compact group such that $H_2(\Linp;\z{})$ is finite.  Then there exists $m_0$ such that for every $m\geq m_0$ the map $c_{\Linp} \colon CW_{B_m}(\Linp) \rightarrow \Linp$ is a mod $p$-equivalence.
\end{coro}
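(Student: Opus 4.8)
The plan is to deduce the statement directly from the two results already established, the only genuine content being the choice of a single threshold that works for both. First I would apply Proposition~\ref{p:BP Am cellular} to the discrete $p$-toral group $S$ itself: this produces a non-negative integer $m_S$ such that $BS$ is $B_m$-cellular for every $m \geq m_S$. Next, since $H_2(\Linp;\z{})$ is assumed finite, Proposition~\ref{p:Linp+H2} applies and furnishes an integer $n_{\mathcal F}$ such that $\Linp$ is a $p$-complete $CW_{B_m}$-good space for all $m \geq n_{\mathcal F}$.

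Now set $m_0 = \max(m_S, n_{\mathcal F})$. For any $m \geq m_0$ we have, on the one hand, that $BS$ is $B_m$-cellular (because $m \geq m_S$), and, on the other hand, that $m \geq n_{\mathcal F}$. These are exactly the hypotheses of the ``in particular'' clause of Theorem~\ref{p:BS cell CW A Xp = Xp}: the $B_m$-cellularity of $BS$ means the structure map $BS \to \Linp$ lifts through $CW_{B_m}(\Linp)$, hence $\ker(\p{r}{p}) = S$, and then that theorem yields that $c_{\Linp}\colon CW_{B_m}(\Linp)\to\Linp$ is a mod $p$-equivalence. Since $m\geq m_0$ was arbitrary, this proves the corollary with this choice of $m_0$.

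There is essentially no obstacle here beyond the bookkeeping of taking a maximum; the substantive work has already been carried out in Proposition~\ref{p:BP Am cellular} (the existence of a cellularity threshold for discrete $p$-toral groups, whose proof is the delicate induction over the group of components via Zabrodsky's lemma) and in Theorem~\ref{p:BS cell CW A Xp = Xp} (which transports the cellularity of $BS$ into the mod $p$ statement about $\Linp$ through the Notbohm kernel and Proposition~\ref{p:Dwyer p local}). The corollary is just the observation that the two thresholds $m_S$ and $n_{\mathcal F}$ can be satisfied simultaneously.
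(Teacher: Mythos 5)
Your proposal is correct and follows exactly the paper's own argument: invoke Proposition~\ref{p:BP Am cellular} to get a cellularity threshold for $BS$, take the maximum with $n_{\mathcal F}$ from Proposition~\ref{p:Linp+H2}, and conclude via the ``in particular'' clause of Theorem~\ref{p:BS cell CW A Xp = Xp}. Nothing to add.
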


\begin{proof}
We know from Proposition \ref{p:BP Am cellular} that there exists $r_0$ such that, for every $r\geq r_0$, $BS$ is $B_r$-cellular.
Now it is enough to take $m_0=max(n_{\mathcal F}, r_0)$.
\end{proof}

\begin{remark}
{\color{black} The analogous statements hold changing $B_m$ by $B\mathbb{Z}/p^m$ (referred to the respective $\p{r}{p}$)}.
{\color{black} The proof for the case $B\mathbb{Z}/p^m$
invokes Proposition \ref{p:Linp+H2pm} instead of \ref{p:Linp+H2}.}
\end{remark}

 Using these tools, we are also able to understand the structure of the $B_n$-cellular approximation of $\Linp$ \emph{before} $p$-completion.

 \begin{coro}\label{c:BS cell CW A X fib rac}
  Let $(S,\F)$ be a $p$-local compact group such that $H_2(\Linp;\z{})$ is finite and $\Linp$ is nilpotent. Then
  $CW_{B_n}(\Linp)$ fits into a homotopy fibre sequence
  $$
   CW_{B_n}(\Linp) \rightarrow \Linp \rightarrow (\Linp)_{\Q}
  $$ for $n\geq n_{\mathcal F}$.
 \end{coro}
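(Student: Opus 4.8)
\emph{Plan of proof.} The strategy is to recognise the base of Chach\'olski's fibre sequence as the rationalization of $\Linp$. Since $H_2(\Linp;\z{})$ is finite, Proposition~\ref{p:Linp+H2} makes $\Linp$ a $p$-complete $CW_{B_n}$-good space for $n\geq n_{\mathcal F}$, and, enlarging $n_{\mathcal F}$ if necessary by Proposition~\ref{p:BP Am cellular}, I may also assume $BS$ is $B_n$-cellular. Proposition~\ref{p:CW phi} then provides a homotopy fibre sequence
$$CW_{B_n}(\Linp)\xrightarrow{c}\Linp\xrightarrow{r}P,\qquad P:=P_{\Sigma B_n}(C),$$
where $C$ is the homotopy cofibre of the structure map $\varphi\colon B'\to\Linp$; as in the proof of Proposition~\ref{p:CW phi}, $P$ is $1$-connected, hence nilpotent. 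By Theorem~\ref{p:BS cell CW A Xp = Xp} the augmentation $c$ is a mod~$p$ equivalence, so $\p{c}{p}$ is a homotopy equivalence; feeding this into the long exact sequence of the $p$-completed fibration of Proposition~\ref{p:CW phi} and using that $P$ is $1$-connected forces $\p{P}{p}\simeq\ast$, i.e.\ $\widetilde H_*(P;\z{/p})=0$.

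Next I would control $F:=CW_{B_n}(\Linp)$ at the remaining primes. For every prime $q\neq p$, and for $R=\Q$, the space $B_n=\bz{p^{\infty}}\times\bz{p^n}$ has vanishing reduced $R$-homology, by the K\"unneth formula and the fact that both factors are classifying spaces of $p$-torsion groups. The class of pointed connected spaces with vanishing reduced $R$-homology is a closed class: it is plainly closed under weak equivalences, and it is closed under pointed homotopy colimits because the reduced $R$-homology of a pointed homotopy colimit of connected spaces agrees with that of the unpointed homotopy colimit taken relative to the nerve of the indexing category, so the contribution of the base cancels. Hence every $B_n$-cellular space lies in this class; in particular $\widetilde H_*(F;\Q)=0$ and $\widetilde H_*(F;\z{/q})=0$ for all $q\neq p$. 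The Serre spectral sequence of $F\to\Linp\xrightarrow{r}P$ (with simply connected base $P$) then collapses with these coefficients, so $r$ induces isomorphisms on $H_*(-;\Q)$ and on $H_*(-;\z{/q})$ for $q\neq p$. Since $\Linp$ is $p$-complete and nilpotent, $\widetilde H_*(\Linp;\z{/q})=0$ for all $q\neq p$ (\cite{MR0365573}); combining this with the mod~$p$ statement above gives $\widetilde H_*(P;\z{/q})=0$ for \emph{every} prime $q$, so $\widetilde H_*(P;\z{})$ is a $\Q$-vector space in each degree by the universal coefficient theorem. A $1$-connected space with rational integral homology is a rational space (the map $P\to P_{\Q}$ is then an integral homology equivalence of nilpotent spaces, hence an equivalence). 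Therefore $P$ is nilpotent and rational, while $r$ is a rational homology equivalence out of the nilpotent space $\Linp$; by the universal property of rationalization $r$ factors through $\Linp\to(\Linp)_{\Q}$ by an equivalence, so $P\simeq(\Linp)_{\Q}$ compatibly with $r$. Transporting Chach\'olski's fibre sequence along this equivalence yields the fibration $CW_{B_n}(\Linp)\to\Linp\to(\Linp)_{\Q}$.

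The delicate point is the vanishing of $\widetilde H_*(P;\z{/q})$ away from $p$, i.e.\ that $r$ destroys \emph{only} the $B_n$-cellular information: this is exactly where $p$-completeness of $\Linp$ enters (to make $r$ a $\z{/q}$-homology equivalence with $\z{/q}$-acyclic source), together with the mod~$p$ conclusion of Theorem~\ref{p:BS cell CW A Xp = Xp} to handle $q=p$. The nilpotence hypothesis on $\Linp$ is used only here and in the final appeal to the universal property of rationalization, whereas the finiteness of $H_2(\Linp;\z{})$ is needed solely to secure $CW_{B_n}$-goodness and hence the existence of Chach\'olski's fibre sequence.
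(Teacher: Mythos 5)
Your argument is correct, but it takes a genuinely different route from the paper's. The paper's proof is much shorter: since $\Linp$ is nilpotent, so is $CW_{B_n}(\Linp)$ (the cellular cover is a principal fibration, by the observation after Theorem \ref{t:chacholski}); one then applies the Sullivan arithmetic square to $CW_{B_n}(\Linp)$, quoting \cite[Lemma 2.8]{CASTFLOR} for $(CW_{B_n}(\Linp))_{\Q}\simeq\ast$ and $\p{CW_{B_n}(\Linp)}{q}\simeq\ast$ when $q\neq p$, and Theorem \ref{p:BS cell CW A Xp = Xp} for $\p{CW_{B_n}(\Linp)}{p}\simeq\Linp$, so that the square degenerates to the fibration $CW_{B_n}(\Linp)\rightarrow\Linp\rightarrow(\Linp)_{\Q}$. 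You instead stay on the base of Chach\'olski's fibration and identify $P_{\Sigma B_n}(C)$ itself with $(\Linp)_{\Q}$: mod $p$ acyclic because the augmentation is a mod $p$ equivalence, and rationally and mod $q$ equivalent to $\Linp$ by the Serre spectral sequence together with the $H R$-acyclicity of $B_n$-cellular spaces for $R=\Q$ and $R=\z{/q}$ (which is precisely the content of the cited Lemma 2.8 that you re-derive), whence $P_{\Sigma B_n}(C)$ is a $1$-connected rational space receiving a rational equivalence from the nilpotent space $\Linp$. Both proofs rest on the same two pillars, namely Theorem \ref{p:BS cell CW A Xp = Xp} and the acyclicity of $B_n$-cellular spaces away from $p$; yours buys the additional identification of the nullified Chach\'olski cofibre with the rationalization, at the cost of the extra input that $\widetilde H_*(\Linp;\z{/q})=0$ for $q\neq p$ (from $p$-completeness plus nilpotence), which the arithmetic-square argument sidesteps. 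One point you handle correctly and that the paper leaves implicit: invoking Theorem \ref{p:BS cell CW A Xp = Xp} requires $\ker(\p{r}{p})=S$, which you secure by enlarging $n_{\mathcal F}$ so that $BS$ is $B_n$-cellular via Proposition \ref{p:BP Am cellular}, exactly as in Corollary \ref{c:CW mod p}.
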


 \begin{proof}
Since the classifying space $\Linp$ is nilpotent, the observation after Theorem \ref{t:chacholski} implies that $CW_{B_n}(\Linp)$ is so, and we can use Sullivan arithmetic square. By \cite[Lemma 2.8]{CASTFLOR}, we have $(CW_{B_n} (\Linp))_{\Q} \simeq \ast$ and $\p{CW_{B_n} (\Linp)}{q} \simeq \ast$ for $q \neq p$, so the statement follows from Theorem \ref{p:BS cell CW A Xp = Xp}.
  \end{proof}

 \begin{remark}
  If $S$ is a finite $p$-group, then $\Linp$ is always nilpotent and Corollary~\ref{c:BS cell CW A X fib rac} holds for any finite $p$-local group $(S,\F)$ (in this case $(\Linp)_{\Q} \simeq \ast$). See \cite[Proposition 4.1]{CellFusSys} for details.
 \end{remark}

  In the proof of Theorem \ref{p:CW phi}, the 1-connectedness of $C$ is essential. In general, if $P$ is a discrete $p$-toral group and we are interested in $BP$-cellularization, the cofibre $C$ might not be $1$-connected. Next result shows that in this case we can compute the cellularization of another $p$-local compact group, whose classifying space is $BP$-equivalent to the first one, and whose Chach\'olski's cofibre is 1-connected.

 \begin{prop}\label{p:new p comp gp}
  Let $(S, \F)$ be a $p$-local compact group, $P$ a discrete $p$-group and $N$ a normal subgroup of $\pi_1 \Linp$ generated by the image of homomorphisms
  $P \rightarrow \pi_1 \Linp$ such that the induced pointed map $BP \rightarrow B\pi_1 \Linp$ lifts to $\Linp$. Then there exists a $p$-local compact group $(S, \F_N)$ and a
  $BP$-equivalence $f \colon \Linp_N \rightarrow \Linp$ such that the homotopy cofibre of $ev \colon \bigvee_{[BP,\Linp_N]_{\ast}} BP \rightarrow \Linp_N$ is
  $1$-connected.
 \end{prop}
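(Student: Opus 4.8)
The plan is to realize $(S, \F_N)$ as a quotient fusion system of $(S,\F)$ and to show its classifying space is obtained from $\Linp$ by killing exactly the subgroup $N \trianglelefteq \pi_1\Linp$. The key observation is that $N$ is a normal subgroup of $\pi_1\Linp$ which, by construction, is generated by images of maps that lift over $\Linp$; hence the covering-space-type quotient $\Linp_N$ obtained as the homotopy pushout
$$
\xymatrix{ \widetilde{B\pi_1\Linp}^{\,N} \times_{B\pi_1\Linp} \Linp \ar[r] \ar[d] & \Linp \ar[d] \\ * \ar[r] & \Linp_N }
$$
(i.e.\ fibrewise collapsing the cover of $\Linp$ associated to $N$) has $\pi_1\Linp_N = \pi_1\Linp / N$ and the same higher homotopy/homology as $\Linp$ away from dimension $1$. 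First I would check that this construction is again the classifying space of a $p$-local compact group over the same Sylow $S$: one takes $\F_N$ to be the fusion system whose morphisms are those of $\F$ composed appropriately with the quotient, using that $N$ pulls back to a strongly closed subgroup-type datum in $S$; more precisely one invokes the theory of quotients of fusion systems and linking systems by normal subgroups of the fundamental group (as in Broto--Levi--Oliver, and its compact analogue), which guarantees $(S,\F_N)$ is saturated and that $\p{|\Lin_N|}{p} \simeq \Linp_N$.

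Next I would verify the $BP$-equivalence. The map $f \colon \Linp_N \to \Linp$ is, up to homotopy, the section of the collapse; to see $\map_*(BP, \Linp_N) \to \map_*(BP, \Linp)$ is a weak equivalence, note that its homotopy fibre over a component is controlled by $\map_*(BP, \widetilde{\Linp}^{\,N})$, where $\widetilde{\Linp}^{\,N} \to \Linp$ is the cover with $\pi_1 = N$; since $BP$ is a classifying space of a $p$-group and the relevant fibre is built from a cover, the key point is that every map $BP \to \Linp$ lands — after the appropriate adjustment of components — in the image of $\Linp_N$, which is precisely how $N$ was defined (as generated by images of $BP \to B\pi_1\Linp$ that lift to $\Linp$). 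So any map $BP \to \Linp$ factors up to homotopy through $\Linp_N$, and uniqueness of such factorizations (again because $BP$ is $\bz{p^\infty}$-something, or more simply because the obstruction groups vanish for $BP$-maps) gives that $f_*$ is an equivalence on pointed mapping spaces, hence $f$ is a $BP$-equivalence in the sense of Theorem \ref{t:chacholski}.

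Finally I would check the $1$-connectedness of the cofibre. By construction $\pi_1\Linp_N = \pi_1\Linp/N$, and $N$ was chosen to be generated by the images of all homomorphisms $P \to \pi_1\Linp$ arising from pointed maps $BP \to \Linp$; therefore, after passing to $\Linp_N$, every pointed map $BP \to \Linp_N$ induces the trivial map on $\pi_1$ composed with the surjection onto $\pi_1\Linp/N$ — wait, more carefully: the image in $\pi_1\Linp_N$ of any map $BP \to \Linp_N$ is trivial, because such a map composed with $f$ is a map $BP \to \Linp$ whose $\pi_1$-image lies in $N$ by definition of $N$, and $N$ maps to $0$ in $\pi_1\Linp_N$. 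Hence $\ev_* \colon \pi_1\bigl(\bigvee_{[BP,\Linp_N]_*} BP\bigr) \to \pi_1\Linp_N$ is trivial, so the map $\ev$ is surjective on $\pi_1$ with the quotient by the normal closure of its image being all of $\pi_1\Linp_N$; but in fact I want the cofibre to be simply connected, which follows from van Kampen once we know $\pi_1\Linp_N$ is generated by the image of $\pi_1$ of the wedge — and this holds because $\Linp_N$ itself has its fundamental group generated by such images (the same $\psi_g$-type argument as in Proposition \ref{p:Linp+H2}, using that $S$ is locally finite $p$-toral, shows every element of $\pi_1\Linp_N$ is hit by a $BP = B\z/p^m$ for suitable $m$, provided $P$ contains $\z/p^m$; one takes $P$ large enough, or notes the relevant $m$ are bounded since $\pi_1\Linp_N$ is a finite $p$-group). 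Therefore $H_1(C;\z{}) = 0$, and since the wedge and $\Linp_N$ are connected, $C = \hocofib(\ev)$ is $1$-connected.

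\textbf{Main obstacle.} The delicate point is constructing $(S, \F_N)$ as a genuine saturated fusion system and its linking system in the compact setting, and matching $\p{|\Lin_N|}{p}$ with the space-level quotient $\Linp_N$: this requires the machinery of hyperfocal/normal subsystems and quotients for $p$-local compact groups, and checking saturation survives the quotient. Everything else — the $BP$-equivalence and the $1$-connectedness of $C$ — is then a matter of unwinding definitions and reusing the $\psi_g$ lifting argument from Proposition \ref{p:Linp+H2}.
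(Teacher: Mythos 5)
Your construction goes in the wrong direction, and this is a genuine gap rather than a cosmetic one. You build $\Linp_N$ as a \emph{quotient} of $\Linp$ that kills $N$, so that $\pi_1(\Linp_N)=\pi_1(\Linp)/N$; the paper instead takes $\Linp_N$ to be the \emph{covering space} of $\Linp$ corresponding to $N$, i.e.\ the homotopy pullback of $\Linp\rightarrow B\pi_1(\Linp)$ along $BN\rightarrow B\pi_1(\Linp)$, so that $\pi_1(\Linp_N)=N$. The quotient version cannot satisfy the statement. First, the required map $f\colon \Linp_N\rightarrow \Linp$ would have to be a section of the collapse map $\Linp\rightarrow \Linp_N$, and such a section does not exist in general; for the cover, $f$ is simply the covering projection. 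Second, and more decisively, your own computation in the last paragraph shows that with the quotient model every pointed map $BP\rightarrow \Linp_N$ induces the trivial homomorphism on $\pi_1$ (its $\pi_1$-image comes from $N$, which dies in $\pi_1(\Linp)/N$); by van Kampen the cofibre $C$ of $\ev$ then has $\pi_1(C)\cong \pi_1(\Linp)/N$, which is nonzero unless $N=\pi_1(\Linp)$. You notice the contradiction mid-paragraph and try to repair it by arguing that $\pi_1(\Linp_N)$ is generated by images of maps from $BP$, but that is incompatible with the triviality you just established, and the $\psi_g$-argument you invoke requires $P$ to contain arbitrarily large cyclic groups, which is not part of the hypotheses.

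The correct argument is exactly your list of ingredients applied to the cover: since $B\pi_1(\Linp)$ is aspherical, the composite of any pointed map $BP\rightarrow\Linp$ with $\Linp\rightarrow B\pi_1(\Linp)$ is induced by a homomorphism $P\rightarrow\pi_1(\Linp)$ that lifts to $\Linp$ by construction, so its image lies in $N$; hence every pointed map $BP\rightarrow\Linp$ lifts uniquely to the cover $\Linp_N$, which gives that the covering projection is a $BP$-equivalence, and the images of these lifts generate $\pi_1(\Linp_N)=N$ by the very definition of $N$, which gives the $1$-connectedness of the cofibre. Finally, the heavy machinery you flag as the main obstacle (quotient fusion systems, saturation of quotients) is not needed: the only external input is that this covering space is again the classifying space of a $p$-local compact group $(S,\F_N)$, which the paper quotes from Gonz\'alez's thesis (Theorem B.4.4 there), following the scheme of \cite[Proposition 4.2, Step 2]{CellFusSys}.
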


 \begin{proof}
  Proceeding as in \cite[Proposition 4.2 (Step 2)]{CellFusSys} we need to check that if $\tilde{\Linp}$ is the $1$-connected cover or $\Linp$ and $X$ is the homotopy pullback of the fibration
  $$ \widetilde{\Linp} \rightarrow \Linp \rightarrow B \pi_1 (\Linp) $$
  along $BN \rightarrow B \pi_1 (\Linp)$, then there exists a $p$-local compact group $(S, \F_N)$ such that $X \simeq \Linp_N$. But this holds by \cite[Theorem B.4.4]{AlexTesis}.
 \end{proof}

\section{Cellularization of classification spaces of compact Lie groups}
\label{section5}

 Let now $G$ be a compact Lie group and $S \in \Syl_p(G)$. The original motivation for our work was to understand the $A$-cellular properties of $BG$ for homotopy meaningful spaces $A$. In \cite{CASTFLOR}, a number of results in this direction were obtained, but we expected there to find a way of interpreting them in an unified way, as well as obtaining new ones; the example of $BSO(3)$ was particularly intriguing, because the structure of $CW_{B\z{/p}}BSO(3)$ strongly resembled the general structure of $CW_{B\z{/p}}BH$ for $H$ finite (getting rid of the rational part), but the method to describe it was defined \emph{ad hoc} and had no apparent relation with the other examples there. The results of this section show that the shape of $CW_{B\z{/2}}BSO(3)$ was the ``standard one", and hence this example worked in fact as a guiding light in our search.

 As can be deduced from the results in the previous section, the key point to describe these cellularizations in the desired unified way is to understand the cellular structure of $BG$ and their homotopic mod $p$ analogues (with regard to classifying spaces of $p$-torsion groups) in terms of the fusion data of the corresponding $p$-local compact group. This should not be surprising, as these data codify the mod $p$ homotopy structure of the corresponding classifying space. In this section, we will make use of these properties in order to obtain the corresponding results for classifying spaces of honest compact Lie groups, and more accurate descriptions of the cellular structure by means of classical classification results (see in particular Theorem \ref{t:CWBGp BGp BGpQ} below). We will concentrate first in $B_m$-cellularization, and then we describe some examples of cellular approximation with regard to $B\mathbb{Z} /p^m$, that will serve as a bridge to the more thorough treatment of last section.


Our first two results translate the conditions of Proposition \ref{p:BS cell CW A Xp = Xp} and Corollary \ref{c:BS cell CW A X fib rac} about the Notbohm kernel to the language of classifying spaces of groups.

 \begin{coro}\label{p:CW BGp = BGp}
  Let $G$ be a compact Lie group and $S \in \Syl_p(G)$ a maximal discrete $p$-toral subgroup. Assume that $H_2(\p{BG}{p}; \z{})$ is finite. Then there is an integer $n_{\mathcal F} \geq 1$ such that
  $\ker(\p{r}{p})$ is isomorphic to the Sylow $S$, and the augmentation map $c \colon CW_{\Am} (\p{BG}{p}) \rightarrow \p{BG}{p}$ is a mod $p$ equivalence.
 \end{coro}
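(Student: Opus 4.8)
The plan is to reduce the statement directly to Theorem \ref{p:BS cell CW A Xp = Xp} applied to the $p$-local compact group $(S,\F_S(G))$ associated to $G$ at the prime $p$ by \cite[Theorem 9.10]{MR2302494}, whose classifying space $\Linp$ is homotopy equivalent to $\p{BG}{p}$. Under this identification the hypothesis that $H_2(\p{BG}{p};\z{})$ is finite becomes exactly the running hypothesis $H_2(\Linp;\z{})<\infty$ of Theorem \ref{p:BS cell CW A Xp = Xp}, so Proposition \ref{p:Linp+H2} produces the integer $n_{\mathcal F}\geq 1$ and tells us that $\Linp$ is a $p$-complete $CW_{B_m}$-good space for all $m\geq n_{\mathcal F}$, with its associated Chach\'olski fibration $\p{CW_{B_m}(\Linp)}{p}\xrightarrow{\p{c}{p}}\Linp\xrightarrow{\p{r}{p}}\p{(P_{\Sigma B_m}C)}{p}$.

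The only thing left to establish is that the Notbohm kernel $\ker(\p{r}{p})$ equals the whole Sylow $S$; then Proposition \ref{p:Dwyer p local} forces $\p{r}{p}\simeq\ast$, and Proposition \ref{p:CW phi} (equivalently, the last sentence of Theorem \ref{p:BS cell CW A Xp = Xp}) gives that $\p{c}{p}$ is an equivalence, i.e. the augmentation $c\colon CW_{B_m}(\p{BG}{p})\rightarrow\p{BG}{p}$ is a mod $p$ equivalence. To see $\ker(\p{r}{p})=S$, I would argue as in Theorem \ref{p:BS cell CW A Xp = Xp}: it suffices to show that for every $g\in S$ the restriction $\p{r}{p}|_{B\langle g\rangle}$ is null-homotopic. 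Each cyclic subgroup $\langle g\rangle\leq S$ is a finite $p$-group, isomorphic to $\z{/p^{k}}$ with $p^k\leq p^{m}$ after enlarging $m$ if necessary (but note $n_{\mathcal F}$ was chosen precisely so that every element of $\pi_1(\Linp)$, and in fact—via the lifting of $B\langle g\rangle\to BS\to\Linp$—every such cyclic class, is already accounted for); so the composite $B\langle g\rangle\to BS\to\Linp=\p{BG}{p}$ factors through the summand $\bz{p^{m}}$ of $B_m$ appearing in the construction of $B'=\bigvee (B\z{/p^\infty})^{r(f)}\times B\z{/p^{m}}$ in Proposition \ref{p:Linp+H2}. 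Since that $B'$ is $B_m$-cellular and maps to $\Linp$ via $\varphi$ surjectively on $[B_m,-]_\ast$, the map $B\langle g\rangle\to\Linp$ lifts through $\varphi$, hence through $CW_{B_m}(\Linp)$, and therefore its composite with $r$ is null; as $r\simeq c\circ(\text{lift})$ up to the fibration, $\p{r}{p}|_{B\langle g\rangle}\simeq\ast$. This is exactly the statement $BS\to\Linp$ lifting to $CW_{B_m}(\Linp)$, so the final clause of Theorem \ref{p:BS cell CW A Xp = Xp} applies verbatim.

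The main obstacle is bookkeeping rather than conceptual: one must make sure the integer $n_{\mathcal F}$ of Proposition \ref{p:Linp+H2}, chosen so that $\varphi$ is $\pi_1$-surjective, is also large enough that \emph{every} cyclic subgroup of $S$ (not merely generators of $\pi_1$) contributes a class in $[B_{m},\Linp]$ hit by $\varphi$—but since $[B\z{/p^m},BS]\cong\Rep(\z{/p^m},S)$ is finite by \cite[Lemma 1.4a]{MR2302494} and exhausts all cyclic subgroups of $S$ of exponent dividing $p^m$, and $S$ is locally finite so every element has finite order, increasing $m$ beyond the exponent bound coming from the (finite) set $\Rep(\z{/p^{n_{\mathcal F}}},S)$ suffices; equivalently one simply invokes the already-proven implication in Theorem \ref{p:BS cell CW A Xp = Xp} that $B_m$-cellularity of $BS$—or, weakly, liftability of $BS\to\Linp$—gives $\ker(\p{r}{p})=S$, and checks that liftability holds because $BS=\hocolim$ of cyclic pieces each of which lifts. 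A secondary point to record is that $n_{\mathcal F}\geq 1$ (rather than possibly $0$), which is immediate from the construction.
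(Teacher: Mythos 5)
Your reduction to Theorem \ref{p:BS cell CW A Xp = Xp} via Proposition \ref{p:Linp+H2} is the right frame, and it matches the paper's strategy. But the way you establish $\ker(\p{r}{p})=S$ has a genuine gap. You argue cyclic subgroup by cyclic subgroup: each $\langle g\rangle\cong\z{/p^k}$ is handled ``after enlarging $m$ if necessary.'' This enlargement cannot be made uniformly: for a compact Lie group with nontrivial maximal torus, $S$ contains the discrete torus $(\z{/p^{\infty}})^r$ and hence elements of unbounded order, so no single $m$ covers all $g\in S$. Nor can you recover $S$ from the elements you do control: the subgroup of $S$ generated by elements of order $\leq p^m$ is $\Omega_{p^m}S$, which is a \emph{proper} subgroup whenever $r>0$ (e.g.\ $\Omega_{p^m}(\z{/p^{\infty}})=\z{/p^m}$), and membership of $g^p$ in the Notbohm kernel does not imply membership of $g$. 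Your fallback --- ``liftability holds because $BS=\hocolim$ of cyclic pieces each of which lifts'' --- is also unsupported: $BS$ is not presented as a pointed homotopy colimit of classifying spaces of cyclic subgroups, and even if it were, lifts of the pieces against the Chach\'olski fibration would have to be chosen coherently to assemble into a lift of $BS$.

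What is missing is precisely the input the paper uses: Proposition \ref{p:BP Am cellular}, which asserts that $BS$ itself is $B_m$-cellular for all $m\geq m_0$. Its proof is not an elementwise argument; it uses the $B\z{/p^{\infty}}$ factor of $B_m$ to absorb the maximal torus ($BP_0$ is $B\z{/p^{\infty}}$-cellular by Proposition \ref{p:CW BZpinfty BPinfty}) and then an induction on the group of components via Zabrodsky's lemma. Once $BS$ is known to be $B_m$-cellular, the map $BS\to\Linp$ lifts through $CW_{B_m}(\Linp)$, so $\p{r}{p}|_{BS}\simeq\ast$ and hence $\p{r}{p}|_{B\langle g\rangle}\simeq\ast$ for \emph{every} $g\in S$ simultaneously, giving $\ker(\p{r}{p})=S$; the final clause of Theorem \ref{p:BS cell CW A Xp = Xp} then concludes. (One should also take $m\geq\max(n_{\mathcal F},m_0)$, combining the constant from Proposition \ref{p:Linp+H2} with the one from Proposition \ref{p:BP Am cellular}, as in Corollary \ref{c:CW mod p}.) Without citing or reproving Proposition \ref{p:BP Am cellular}, your argument does not close.
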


 \begin{proof}
By Proposition \ref{p:BP Am cellular}, $BS$ is $B_m$-cellular for every $m\geq m_0$, and now the result is a straightforward consequence of Theorem~\ref{p:BS cell CW A Xp = Xp}.
\end{proof}


 \begin{coro}\label{c:CWBGp BGp BGpQ}
  Let $G$ be a compact connected Lie group such that $\pi_1 G$ is finite. Then if $\ker(\p{r}{p}) = S$, the homotopy fibre of $\p{BG}{p} \rightarrow (\p{BG}{p})_{\Q}$ is
  homotopy equivalent to $CW_{\Am} (\p{BG}{p})$. Furthermore, there exists an integer $m_0 \geq 1$ such that
  $$
   CW_{\Am} (\p{BG}{p}) \rightarrow \p{BG}{p} \rightarrow (\p{BG}{p})_{\Q}
  $$
  is a fibre sequence for all $m \geq m_0$.
 \end{coro}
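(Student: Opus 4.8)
The plan is to recognize $\p{BG}{p}$ as the classifying space $\Linp$ of the $p$-local compact group $(S,\F_S(G))$ associated with $G$ and a maximal discrete $p$-toral subgroup $S\in\Syl_p(G)$ (in the sense of \cite[Theorem~9.10]{MR2302494}), and then to apply the general results of Section~\ref{section4}; the only preparatory step is to verify their two standing hypotheses. Since $G$ is connected, $BG$ is $1$-connected, and hence so is $\p{BG}{p}\simeq\p{|\Lin_S(G)|}{p}$; in particular $\Linp$ is nilpotent, and by Remark~\ref{r:1-connected} we may take $n_{\mathcal F}=1$. Moreover $\pi_2(\p{BG}{p})\cong(\pi_1 G)\com{p}$ is a finite $p$-group because $\pi_1 G$ is finite, so the Hurewicz isomorphism $H_2(\p{BG}{p};\z{})\cong\pi_2(\p{BG}{p})$ shows that $H_2(\Linp;\z{})$ is finite, which is the hypothesis required in Corollary~\ref{p:CW BGp = BGp} and Corollary~\ref{c:BS cell CW A X fib rac}.

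For the first assertion I would argue as follows. Suppose $\ker(\p{r}{p})=S$ for the Chach\'olski map $r\colon\Linp\to P_{\SAm}C$ attached to $\Am$. By Theorem~\ref{p:BS cell CW A Xp = Xp} the augmentation $c\colon CW_{\Am}(\p{BG}{p})\to\p{BG}{p}$ is a mod $p$-equivalence, so $\p{CW_{\Am}(\p{BG}{p})}{p}\simeq\p{BG}{p}$. Since $\Linp$ is nilpotent, the fibration $CW_{\Am}(\Linp)\to\Linp$ is principal (the observation after Theorem~\ref{t:chacholski}), hence $CW_{\Am}(\p{BG}{p})$ is nilpotent, and by \cite[Lemma~2.8]{CASTFLOR} its rationalization and all its $q$-completions with $q\ne p$ are contractible. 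Feeding this into the Sullivan arithmetic square exactly as in the proof of Corollary~\ref{c:BS cell CW A X fib rac} identifies $CW_{\Am}(\p{BG}{p})$ with the homotopy fibre of $\p{CW_{\Am}(\p{BG}{p})}{p}\to(\p{CW_{\Am}(\p{BG}{p})}{p})_{\Q}$, which through the equivalence above is the homotopy fibre of $\p{BG}{p}\to(\p{BG}{p})_{\Q}$; this is the claimed fibre sequence, and since $n_{\mathcal F}=1$ no lower bound on $m$ intervenes.

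For the ``furthermore'' part, apply Proposition~\ref{p:BP Am cellular} to $S$: there is $m_0\geq 1$ such that $BS$ is $\Am$-cellular for every $m\geq m_0$. For such $m$ the canonical map $BS\to\p{BG}{p}$ lifts through $CW_{\Am}(\p{BG}{p})$, so the ``in particular'' clause of Theorem~\ref{p:BS cell CW A Xp = Xp} gives $\ker(\p{r}{p})=S$; as also $m\geq m_0\geq 1=n_{\mathcal F}$, the first part applies and the displayed sequence is a homotopy fibre sequence for all $m\geq m_0$.

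As with Corollary~\ref{p:CW BGp = BGp}, the substance here is bookkeeping rather than new geometry: translating the intrinsic hypotheses ($H_2(\Linp;\z{})$ finite, $\Linp$ nilpotent) into the Lie-theoretic ones ($G$ connected, $\pi_1 G$ finite), and keeping the two regimes straight — the conditional statement valid whenever $\ker(\p{r}{p})=S$, and the unconditional one for large $m$ coming from Proposition~\ref{p:BP Am cellular}. The only point that deserves genuine care is that the augmentation $c$ itself (not merely $\p{c}{p}$) sits in the rationalization fibre sequence, which is precisely what the nilpotence of $CW_{\Am}(\p{BG}{p})$ together with the arithmetic-square argument of Corollary~\ref{c:BS cell CW A X fib rac} supplies; I do not expect an obstacle beyond this.
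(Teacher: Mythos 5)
Your proposal is correct and follows essentially the same route as the paper: the paper's own proof consists of exactly your preparatory step (connectedness of $G$ gives $1$-connectedness of $\p{BG}{p}$, and Hurewicz plus finiteness of $\pi_1 G$ gives finiteness of $H_2(\p{BG}{p};\z{})$), after which it simply cites Corollary \ref{c:BS cell CW A X fib rac}. You merely unpack that citation — the nilpotence of $CW_{\Am}(\p{BG}{p})$, \cite[Lemma 2.8]{CASTFLOR}, the arithmetic square, and Proposition \ref{p:BP Am cellular} for the ``furthermore'' clause — which is exactly the content the paper delegates to its earlier results.
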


 \begin{proof}
  If $G$ is connected, then $\p{BG}{p}$ is $1$-connected, and by Hurewicz Theorem, $H_2(\p{BG}{p};\z{}) \cong \pi_2(\p{BG}{p})$ finite since $\pi_1 G$
  is so. Then the statement follows from Corollary~\ref{c:BS cell CW A X fib rac}.

 \end{proof}

 Next we will remove the hypothesis of finiteness of $\pi_1 G$ of the classification of compact connected Lie groups. The following example illustrates that frequently this assumption is unnecessary.

 \begin{example}\label{e:S1}
  The compact connected Lie group $S^1$ has an infinite cyclic fundamental group, and moreover $\p{(BS^1)}{p} \simeq \p{(\bz{p^{\infty}})}{p} \simeq K(\hat{\z{}}_p,2)$.
  Using Sullivan Arithmetic Square we get the fibre sequence
  $$
   \xymatrix@1@C=1.5em{K(\z{/p^{\infty}},1) \ar[r]^-{\iota} & K(\hat{\z{}}_p,2) \ar[r] & K(\hat{\z{}}_p,2)_{\Q}}.
  $$
  Since the base space is $\Am$-null, the map $\iota$ is a $\Am$-equivalence. The fibre $\bz{p^{\infty}} \simeq K(\z{/p^{\infty}},1)$ is $\Am$-cellular. Then the fibre sequence
  becomes
  $$
   CW_{\Am} (\p{(BS^1)}{p}) \rightarrow \p{(BS^1)}{p} \rightarrow (\p{(BS^1)}{p})_{\Q}.
  $$
 \end{example}

The following describes the general situation in the compact connected case. Observed that the hypothesis about the finiteness of the second homology group is no further necessary.

 \begin{theorem}\label{t:CWBGp BGp BGpQ}
  Let $G$ be a compact connected Lie group. Then there exists an integer $m_0 \geq 1$ such that the homotopy fibre of $\p{BG}{p} \rightarrow (\p{BG}{p})_{\Q}$ is homotopic to
  $CW_{\Am} (\p{BG}{p})$ for all $m \geq m_0$.
 \end{theorem}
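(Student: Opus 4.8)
The plan is to reduce to the finite‐$\pi_1$ case already handled in Corollary \ref{c:CWBGp BGp BGpQ} by passing to a suitable covering group. First I would use the structure theory of compact connected Lie groups: $G$ has a finite central cover of the form $\tilde G = G_1 \times T$, where $G_1$ is a $1$-connected (hence semisimple) compact Lie group and $T \cong (S^1)^k$ is a torus, and $\pi_1 G$ is a finitely generated abelian group. Writing $\pi_1 G \cong F \times \mathbb{Z}^k$ with $F$ finite, one can choose an intermediate covering $G' \to G$ with $\pi_1 G' \cong \mathbb{Z}^k$ (kill the finite part $F$), so that $G'$ is a compact connected Lie group with torsion-free fundamental group; concretely $G' = G_1 \times T$ up to isogeny. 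The point of this choice is that $\p{BG'}{p}$ differs from $\p{BG}{p}$ only by a finite cover, while the ``infinite'' part of $\pi_1$ is exactly what produces copies of $K(\hat{\mathbb Z}_p,2) \simeq \p{BS^1}{p}$, which by Example \ref{e:S1} behave well under $B_m$-cellularization.

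The key steps, in order, would be: (1) factor $BG' \to BG$ as a fibration with fibre $BF$, $F$ finite abelian, and observe — as in the proof of Proposition \ref{p:Linp+H2} or via \cite{MR928826} — that this does not affect the relevant cellular data mod $p$; more precisely I would show $CW_{B_m}(\p{BG}{p})$ and $CW_{B_m}(\p{BG'}{p})$ sit in compatible Chach\'olski fibrations. (2) Apply Corollary \ref{c:CWBGp BGp BGpQ} to the factors: for the semisimple factor $G_1$ we have $\pi_1 G_1 = 0$, so $H_2(\p{BG_1}{p};\mathbb Z) = \pi_2(\p{BG_1}{p})$, which is finite (indeed $\pi_2$ of a Lie group vanishes, so $\pi_2(\p{BG_1}{p})$ is a finite $p$-group); and we must check $\ker(\p{r}{p}) = S_1$, which follows because $BS_1$ is $B_m$-cellular for $m$ large by Proposition \ref{p:BP Am cellular}, exactly as in the proof of Corollary \ref{p:CW BGp = BGp}. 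For the torus factor, Example \ref{e:S1} gives the desired fibre sequence directly. (3) Combine: since $\p{BG'}{p} \simeq \p{BG_1}{p} \times \p{BT}{p}$ and $CW_{B_m}$ respects products up to the usual care with nullifications (using that $P_{\Sigma B_m}$ of a product is the product of the $P_{\Sigma B_m}$'s, both factors being $1$-connected after the arguments above), the homotopy fibre of $\p{BG'}{p} \to (\p{BG'}{p})_\Q$ is $CW_{B_m}(\p{BG'}{p})$ for $m \geq m_0$. (4) Transfer back to $G$ using step (1) and the fact that rationalization commutes with the finite cover $BG' \to BG$ on the fibre over a point (the fibre $BF$ is rationally trivial), so the homotopy fibre of $\p{BG}{p} \to (\p{BG}{p})_\Q$ agrees with that of $\p{BG'}{p} \to (\p{BG'}{p})_\Q$ up to the $CW_{B_m}$-equivalence.

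The main obstacle I expect is step (1): carefully controlling how $B_m$-cellularization interacts with the finite covering $\p{BG'}{p}\to\p{BG}{p}$. The issue is that $\p{BG}{p}$ need not be $1$-connected, so the clean hypotheses of Proposition \ref{p:CW phi} (which require $C$ to be $1$-connected, equivalently $\pi_1(\varphi)$ surjective) must be re-established; this is precisely the situation anticipated in the discussion preceding Proposition \ref{p:new p comp gp}, so I would invoke that proposition (or rather its Lie-group analogue) to replace $\p{BG}{p}$ by a $BP$-equivalent space whose Chach\'olski cofibre is $1$-connected, and then check that the map $\pi_1(\p{BG'}{p})\to\pi_1(\p{BG}{p})$ hits exactly the normal subgroup $N$ generated by the liftable $B_m$-classes — which it does, since $\pi_1(\p{BG}{p})$ is generated by $p$-torsion elements each coming from a cyclic subgroup of $S$, and those classes obviously lift. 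A secondary technical point is verifying the product formula for $CW_{B_m}$ of $\p{BG_1}{p}\times\p{BT}{p}$; this should follow from Theorem \ref{t:chacholski} applied to the product together with Remark \ref{r:wedgevsproduct} and the fact that $\Omega$ and $P_{\Sigma B_m}$ commute with finite products on $1$-connected spaces, but it requires a short verification that the two Chach\'olski fibrations assemble correctly.
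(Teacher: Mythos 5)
Your overall strategy --- decompose $G$ via the structure theorem, treat the $1$-connected simple factors with Corollary \ref{c:CWBGp BGp BGpQ} and the torus with Example \ref{e:S1}, then transfer along a finite central extension --- is the same as the paper's, but the execution has two genuine problems. First, the intermediate cover $G'$ with $\pi_1 G'\cong\mathbb{Z}^k$ need not split as a product $G_1\times T$: for $G=U(n)$ one has $\pi_1 U(n)\cong\mathbb{Z}$ already torsion-free, so $G'=U(n)$, which is not $SU(n)\times S^1$; and ``up to isogeny'' undoes exactly the torsion-freeness you introduced $G'$ to achieve. The paper avoids this by working directly with the full cover $H=G_1\times\cdots\times G_k\times T^r$ (so the product splitting of $\p{BH}{p}$ is automatic) together with its finite central kernel $K$. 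Relatedly, your ``main obstacle'' is a red herring: since $G$ is connected, $BG$ and hence $\p{BG}{p}$ are $1$-connected, so no appeal to Proposition \ref{p:new p comp gp} is needed; the actual reason Corollary \ref{c:CWBGp BGp BGpQ} cannot be applied to $G$ itself is that $H_2(\p{BG}{p};\mathbb{Z})\cong\pi_2(\p{BG}{p})$ is infinite when $\pi_1 G$ is, and the paper's proof simply never runs the $CW_B$-good machinery on $G$.

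Second, step (4) is false as stated: the homotopy fibre $F$ of $\p{BG}{p}\to(\p{BG}{p})_{\Q}$ does not agree with the fibre for the cover, even up to $B_m$-equivalence. Since $(\p{BH}{p})_{\Q}\simeq(\p{BG}{p})_{\Q}$ (as $\p{BK}{p}$ is rationally trivial), one gets a fibration $\p{BK}{p}\to CW_{\Am}(\p{BH}{p})\to F$, and $\p{BK}{p}$ is not $\Am$-null (it admits essential pointed maps from $\bz{p^m}$ whenever $K$ has $p$-torsion); already for $G=SO(3)$, $H=S^3$, the two fibres differ by a $\bz{2}$. The correct transfer, which is the heart of the paper's argument and which you would still need to supply, has two steps: (i) $F$ is $\Am$-cellular because it is the base of a fibration whose fibre $\p{BK}{p}$ and total space $CW_{\Am}(\p{BH}{p})$ are both $\Am$-cellular (Proposition \ref{p:CW fib}); and (ii) $F\to\p{BG}{p}$ is an $\Am$-equivalence because $\mapp(\Am,(\p{BG}{p})_{\Q})\simeq\ast$, as $\tilde{H}_{\ast}(\Am;\Q)=0$. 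Together these identify $F\simeq CW_{\Am}(\p{BG}{p})$.
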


 \begin{proof}
  A compact connected Lie group $G$ is homeomorphic to $G \cong H/K$, where $H = G_1 \times \ldots \times G_k \times T^r$, $G_i$ is a $1$-connected simple Lie group for all
  $i \in \{1, \ldots, k\}$, $T^r = (S^1)^r$ and $K$ is a finite subgroup of the center of $H$ (see \cite[Theorem V.8.1]{MR781344}). Therefore we have a central extension
  $$
   0 \rightarrow K \rightarrow H \rightarrow G \rightarrow 0,
  $$
  which induces a principal fibration.
  Hence by \cite[Lemma II.5.1]{MR0365573}, we can $p$-complete to get the homotopy fibre sequence
  $$
   \p{BK}{p} \rightarrow \p{BH}{p} \rightarrow \p{BG}{p}.
  $$

 Every $G_i$ is 1-connected, and then Corollary~\ref{c:CWBGp BGp BGpQ} implies that for all $i \in \{1, \ldots, k\}$ there exists an integer $m_i \geq 0$ and a homotopy fibration
  $$
   CW_{\Amm{i}} (\p{(BG_i)}{p}) \rightarrow \p{(BG_i)}{p} \rightarrow (\p{(BG_i)}{p})_{\Q}.
  $$
 Moreover by Example~\ref{e:S1}, we obtain another homotopy fibration
  $$
   CW_{\An{n}} (\p{(B(S^1)^r)}{p}) \rightarrow \p{(B(S^1)^r)}{p} \rightarrow (\p{(B(S^1)^r)}{p})_{\Q}
  $$
  for all $n \geq 0$.

Let $l_1 = \max\{m_1, \ldots, m_k\} \geq 0$. Observe that $\p{BK}{p}$ is the classifying space of an abelian finite $p$-group, and hence $\p{BK}{p}$ is $\bz{p^{l_2}}$-cellular for a certain non-negative integer $l_2 \geq 0$. If $m = \max\{l_1,l_2\}$, then $\p{BK}{p}$ is $\Am$-cellular and we have the fibration
  $$
   CW_{\Am} (\p{BH}{p}) \rightarrow \p{BH}{p} \rightarrow (\p{BH}{p})_{\Q}.
  $$

  Now, since $BG$ is $1$-connected, $\p{BG}{p}$ is so and  \cite[Lemma II.5.1]{MR0365573} gives us the fibre sequence
  $$
   (\p{BK}{p})_{\Q} \rightarrow (\p{BH}{p})_{\Q} \rightarrow (\p{BG}{p})_{\Q},
  $$
  where $(\p{BK}{p})_{\Q} \simeq \ast$ since $\p{BK}{p}$ is the classifying space of a finite group. So there is an equivalence $(\p{BH}{p})_{\Q} \simeq (\p{BG}{p})_{\Q}$.

  Let $F$ be the homotopy fibre of $\p{BG}{p} \rightarrow (\p{BG}{p})_{\Q}$. We have the following homotopy commutative diagram of fibrations
  $$
   \xymatrix{\p{BK}{p} \ar[r] \ar@{=}[d] & CW_{\Am}(\p{BH}{p}) \ar[r] \ar[d]           & F \ar[d]^-{f} \\
             \p{BK}{p} \ar[r] \ar[d]     &          \p{BH}{p}  \ar[r] \ar[d]           & \p{BG}{p} \ar[d] \\
               \ast    \ar[r]            &         (\p{BH}{p})_{\Q}   \ar[r]^-{\simeq} & (\p{BG}{p})_{\Q}
             }
  $$
  The right vertical fibration induces a homotopy fibration in mapping spaces
  $$
   \xymatrix@1@C=1.5em{\mapp(\Am,F) \ar[r]^-{f_{\ast}} & \mapp(\Am,\p{BG}{p})_{\{c\}} \ar[r] & \mapp(\Am, (\p{BG}{p})_{\Q})_c}
  $$
  where $\mapp(\Am, (\p{BG}{p})_{\Q}) \simeq \ast$ because $(\p{BG}{p})_{\Q} \simeq L_{\Q} (\p{BG}{p})$ ($\p{BG}{p}$ is $1$-connected), and $\tilde{H}_{\ast}(\Am; \Q) \cong 0$.
  Therefore, $f$ is a $\Am$-equivalence and then $CW_{\Am} F \simeq CW_{\Am} (\p{BG}{p})$.

  Finally, since $\p{BK}{p}$ and $CW_{\Am}(\p{BH}{p})$ are $\Am$-cellular spaces, $F$ is also $\Am$-cellular according to \cite[Theorem 2.D.11]{MR1392221}.
 \end{proof}

 \begin{coro}\label{c:BG K-cell}
  For any compact connected Lie group $G$ there exists an integer $m_0 \geq 1$ such that $\p{BG}{p}$ is $K(\Q \times \z{/p^{\infty}} \times \z{/p^m},1)$-cellular for all
  $m \geq m_0$.
 \end{coro}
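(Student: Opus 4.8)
The plan is to read the statement off from Theorem~\ref{t:CWBGp BGp BGpQ}. Fix $m\ge m_{0}$, with $m_{0}$ as in that theorem, and set $A:=K(\Q\times\z{/p^{\infty}}\times\z{/p^m},1)\simeq K(\Q,1)\times\Am$, where $\Am=\bz{p^{\infty}}\times\bz{p^m}$. Since $A$ is a direct product, both $\Am$ and $K(\Q,1)$ are pointed homotopy retracts of $A$, hence lie in $\mathcal{C}(A)$ (a retract is a pointed homotopy colimit of an idempotent); so every $\Am$-cellular space and every $K(\Q,1)$-cellular space is automatically $A$-cellular.

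Theorem~\ref{t:CWBGp BGp BGpQ} gives a homotopy fibre sequence $F\to\p{BG}{p}\to(\p{BG}{p})_{\Q}$ with $F:=CW_{\Am}(\p{BG}{p})$, which is $\Am$-cellular and thus $A$-cellular. One cannot conclude directly that the total space $\p{BG}{p}$ is $A$-cellular from the fact that its fibre $F$ and its base $(\p{BG}{p})_{\Q}$ are: this ``fibre$+$base $\Rightarrow$ total'' implication already fails for $\bz{p}\to\bz{p^{2}}\to\bz{p}$ (use Proposition~\ref{p:CW BZpinfty BPinfty}(b)). The remedy is to shift the roles: extending the fibre sequence one step to the left yields a homotopy fibre sequence
$$\Omega\bigl((\p{BG}{p})_{\Q}\bigr)\;\longrightarrow\;F\;\longrightarrow\;\p{BG}{p},$$
in which $F$ is the total space, $\p{BG}{p}$ the base, and $\Omega((\p{BG}{p})_{\Q})$ the fibre. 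By Proposition~\ref{p:CW fib} it now suffices to check that $\Omega((\p{BG}{p})_{\Q})$ is $A$-cellular.

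To that end, I would use that $G$ is compact and connected, so $BG$ is $1$-connected and $H^{*}(BG;\Q)$ --- hence $H^{*}(\p{BG}{p};\Q)$ --- is a free graded--commutative $\Q$-algebra on finitely many generators, all of even degree $\ge 2$. Therefore $(\p{BG}{p})_{\Q}$ is a \emph{finite} product $\prod_{i}K(V_{i},n_{i})$ of rational Eilenberg--MacLane spaces with $V_{i}$ a $\Q$-vector space and $n_{i}\ge 2$, so that $\Omega((\p{BG}{p})_{\Q})\simeq\prod_{i}K(V_{i},n_{i}-1)$ is a finite product of rational Eilenberg--MacLane spaces in degrees $\ge 1$. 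Each such $K(V,n)$ is $K(\Q,1)$-cellular: it is a filtered homotopy colimit, over the finite--dimensional subspaces of $V$, of finite products of copies of $K(\Q,n)$; each $K(\Q,n)$ is obtained from $K(\Q,1)$ by iterated bar constructions, i.e.\ by pointed homotopy colimits of finite products of copies of $K(\Q,1)$; and $\mathcal{C}(K(\Q,1))$ is closed under finite products (Remark~\ref{r:wedgevsproduct}) and under pointed homotopy colimits (compare \cite{ChaFarFloSche}, \cite{CellInfEsp}). Hence $\Omega((\p{BG}{p})_{\Q})$ is $K(\Q,1)$-cellular, a fortiori $A$-cellular, and Proposition~\ref{p:CW fib} applied to the displayed fibration gives that $\p{BG}{p}$ is $A$-cellular --- with the same $m_{0}$.

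The point I expect to need the most care is the last one: checking that looping the rationalization really produces a \emph{finite} product of \emph{positive-degree} rational Eilenberg--MacLane spaces (this is where $\mathrm{rk}(G)<\infty$ and, crucially, the connectedness of $G$ enter --- connectedness keeps every EM factor of $(\p{BG}{p})_{\Q}$ in degree $\ge 2$, so that looping once never creates a disconnected $\pi_{0}$), and that such generalized Eilenberg--MacLane spaces remain $K(\Q,1)$-cellular even though their coefficient groups are typically uncountable $\Q$-vector spaces, a feature already present in $\p{(BS^{1})}{p}$ (cf.\ Example~\ref{e:S1}). An alternative route, closer to the Chach\'olski-fibration technology used throughout the paper, is to apply Theorem~\ref{t:chacholski} to $\p{BG}{p}\to(\p{BG}{p})_{\Q}$ directly: the fibre is the $A$-cellular space $F$, and $[A,\p{BG}{p}]_{*}\to[A,(\p{BG}{p})_{\Q}]_{*}$ is trivial because $\widetilde H^{*}(A;\Q)$ is an exterior algebra on a single degree-one class (so $[A,(\p{BG}{p})_{\Q}]_{*}=0$); then $CW_{A}(\p{BG}{p})$ is the fibre of $\p{BG}{p}\to(\p{BG}{p})_{\Q}\to P_{\Sigma A}((\p{BG}{p})_{\Q})$, and since $\Sigma K(\Q,1)\simeq S^{2}_{\Q}$ is a wedge summand of $\Sigma A$ while each $K(V,n)$ with $n\ge 2$ is $S^{2}_{\Q}$-cellular, $(\p{BG}{p})_{\Q}$ is $\Sigma A$-acyclic, that nullification is contractible, and $CW_{A}(\p{BG}{p})\simeq\p{BG}{p}$.
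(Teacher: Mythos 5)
Your main argument is correct and is essentially the paper's own proof: the paper likewise shifts the fibration of Theorem~\ref{t:CWBGp BGp BGpQ} to $\Omega((\p{BG}{p})_{\Q})\rightarrow CW_{\Am}(\p{BG}{p})\rightarrow \p{BG}{p}$, shows the fibre is $K(\Q,1)$-cellular, and concludes with Proposition~\ref{p:CW fib}. The only cosmetic difference is that the paper identifies the fibre as $(\p{G}{p})_{\Q}\simeq\prod_i K(\Qp,k_i)$ with $k_i$ odd (via the rational splitting of $G$ into spheres) and quotes \cite[Proposition 3.C.8]{MR1392221} for the $K(\Q,1)$-cellularity of these factors, where you decompose $(\p{BG}{p})_{\Q}$ as a rational GEM and re-derive that cellularity by hand.
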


 \begin{proof}
  Let $G$ be a compact connected Lie group. According to Theorem~\ref{t:CWBGp BGp BGpQ}, there is an integer $m_0 \geq 1$ such that for all $m \geq m_0$ we have the homotopy
  fibration
  $$
   \Omega ((\p{BG}{p})_{\Q}) \rightarrow CW_{\Am}(\p{BG}{p}) \rightarrow \p{BG}{p}.
  $$

 Note that $\Omega ((\p{BG}{p})_{\Q}) \simeq (\p{(\Omega BG)}{p})_{\Q} \simeq (\p{G}{p})_{\Q}$ since $BG$ is $1$-connected.
  Moreover, there is a homotopy equivalence $(\p{G}{p})_{\Q} \simeq (\prod_{i=1}^n \p{(S^{k_i})}{p})_{\Q} \simeq \prod_{i=1}^n K(\Qp, k_i)$, where $k_i$ are odd natural numbers (see \cite[Section 19]{MR658304}). Each Eilenberg-MacLane space $K(\Qp, k_i)$ is $K(\Qp, 1)$-cellular by \cite[Proposition 3.C.8]{MR1392221}. Since $\Qp$ is an infinite $\Q$-vector space, $K(\Qp, 1)$ is
  $K(\Q,1)$-cellular. This implies that $(\p{G}{p})_{\Q}$ is $K(\Q,1)$-cellular.

  Finally, both $(\p{G}{p})_{\Q}$ and $CW_{\Am}(\p{BG}{p})$ are $K(\Q \times \z{/p^{\infty}} \times \z{/p^m},1)$-cellular, then $\p{BG}{p}$ is so by \cite[2.D]{MR1392221}.
 \end{proof}

It is a very interesting question to understand the nature of the spaces that lie in the cellular class generated by the spaces $K(\Q \times \z{/p^{\infty}} \times \z{/p^m},1)$, $m\geq 1$, and in particular to understand if any ``classifying-like" spaces lies in some of these classes, up to mod $p$ homology equivalence. This problem is related with the theory of $v_0$-spaces, see \cite{MR2242617}.

 In the rest of this section, concrete examples of cellular approximations for $\p{BG}{p}$ are explicitly described by means of the previous theory.

 \begin{example}
  Let $G = S^1$. It is stated in Example~\ref{e:S1} that for every $m \geq 0$, there is a fibre sequence
  $$
   CW_{\Am} (\p{(BS^1)}{p}) \rightarrow \p{(BS^1)}{p} \rightarrow (\p{(BS^1)}{p})_{\Q}.
  $$

  Now we want to compute now the cellularization with respect to $\bz{p^m}$ for a fixed $m \geq 1$. As $\bz{p^m} \rightarrow \bz{p^{\infty}}$ is a $\bz{p^m}$-equivalence, then
  $CW_{\bz{p^m}}(\bz{p^{\infty}}) \simeq \bz{p^m}$. Using the same argument as in Example~\ref{e:S1}, it is easy to see that
  $CW_{\bz{p^m}}(\p{(BS^1)}{p}) \simeq CW_{\bz{p^m}}(\bz{p^{\infty}}) \simeq \bz{p^m}$.
 \end{example}

 \begin{example}\label{e:S3}
  Let $G=S^3$ and $p=2$. Then $S = Q_{2^{\infty}}$, where $Q_{2^{\infty}} = \varinjlim_n Q_{2^{n+1}}$ and $Q_{2^k}$ denotes the generalized quaternion group given by
  $Q_{2^k} = ( \z{/2^{k-1}} \rtimes \z{/4} ) / \langle (2^{k-2},2) \rangle$. For $n \geq 2$, $Q_{2^{n+1}} = \langle x,y \rangle$ such that
  \begin{enumerate}[(i)]
   \item $x^{2^n} = y^4 = 1$,
   \item if $g \in Q_{2^{n+1}}$, then $g = x^a$ or $g = x^a y$ for certain $a \in \z{}$,
   \item $x^{2^{n-1}} = y^2$,
   \item for all $g \in Q_{2^{n+1}}$ such that $g \notin \langle x \rangle$ we have $gxg^{-1} = x^{-1}$, and in particular $yxy^{-1} = x^{-1}$.
  \end{enumerate}

  Denote by $o(a)$ the order of the element $a$. Note first that since $x = yx^{-1} y^{-1}$, $Q_{2^{n+1}} = \langle y, yx^{-1} \rangle$. Moreover, $o(y) = o(yx^{-1}) = 4$ as $yx^{-1}yx^{-1} = yyxx^{-1} = y^2$. Hence
  $BQ_{2^{n+1}}$ is $\bz{4}$-cellular for all $n \geq 2$ according to \cite[Corollary 2.5]{MR2262843}.

  For $n=1$, $Q_4 = \langle x, y \rangle$, where $o(x) = o(y) = 2$, so $BQ_4$ is $\bz{2}$-cellular and, in particular, $\bz{4}$-cellular.

  Therefore, $BQ_{2^{n+1}}$ is $\bz{4}$-cellular for all $n \geq 1$ and hence $BS = BQ_{2^{\infty}}$ is $\bz{4}$-cellular since it is a pointed homotopy colimit of $\bz{4}$-cellular
  spaces. Then, by Theorem~\ref{t:CWBGp BGp BGpQ} there is a homotopy fibration
  $$
   CW_{\bz{2^m}}(\p{(BS^3)}{2}) \rightarrow \p{(BS^3)}{2} \rightarrow (\p{(BS^3)}{2})_{\Q},
  $$
  for all $m \geq 2$.

  The case $m = 1$ is solved by a different method in \cite[Example 6.10]{CASTFLOR}, where it is obtained that $CW_{\bz{2}}(\p{(BS^3)}{2}) \simeq \bz{2}$. The basic reason for this equivalence is that $S^3$ has a special property: there is  only one conjugacy class of elements of order $2$ which is central.

 \end{example}

 \begin{example}\label{ex:so}
  Let $G = SO(3)$ and $p = 2$. In this case $S = D_{2^{\infty}}$, where $D_{2^{\infty}} = \varinjlim_n D_{2^n}$, the colimit of the dihedral groups. The space $BD_{2^{\infty}}$ is
  $\bz{2}$-cellular (see \cite[Example 5.1]{MR2272149}). Then, Theorem~\ref{t:CWBGp BGp BGpQ} gives the fibre sequence
  $$
   CW_{\bz{2^m}}(\p{BSO(3)}{2}) \rightarrow \p{BSO(3)}{2} \rightarrow (\p{BSO(3)}{2})_{\Q},
  $$
  for all $m \geq 1$. As we stated above, the case $m = 1$ was also proved in \cite[Proposition 6.17]{CASTFLOR} using a different method.
 \end{example}

Our last example generalizes the situation of \cite[Section 5]{MR2351607}, where the role of the fusion in the cellular structure of a classifying space was identified for the first time. In Example \ref{ex_son} below, we describe the $\bz{2^m}$-cellular approximation of $BSO(n)$ using a different approach based in the relation of $SO(n)$ and $Spin(n)$.

 \begin{example}\label{e:NGS controls fus}
  Let $G$ be a compact connected Lie group, $S \in \Syl_p(G)$ and $\F := \F_S(G)$. Let $p$ be a prime number such that $(p, |W_G|) = 1$. Then, $N_G(S)$ controls fusion in $G$. Hence,
  proceeding as in \cite[Example 6.4]{CellFusSys}, we get the homotopy fibration
  $$
   CW_{\bz{p^m}}(\p{BG}{p}) \rightarrow \p{BN_G(S)}{p} \rightarrow \p{B(N_G(S)/Cl_{\F}(\z{/p^m}))}{p},
  $$
  for all $m \geq 1$.
 \end{example}

\section{$B\mathbb Z/p^m$-cellularization of classifying spaces}
\label{section6}


Our aim in this last section is to understand the structure of $\Linp$ (and of $BG^{\wedge}_p$, in particular) that depends \emph{only} on $\bz{p^m}$. This was in fact the original motivation for \cite{CASTFLOR}: a direct extension of the previous work for classifying spaces of finite groups. Of course it is the natural framework of \cite{CellFusSys}, since classifying spaces of $p$-local finite groups are rationally trivial.

 For $p$-local compact groups, recall that if $\ker(\p{r}{p})=S$ we are in the situation of Theorem \ref{p:BS cell CW A Xp = Xp}. We will show here that if $\ker(\p{r}{p})$ is strictly contained in the $p$-Sylow subgroup, it will be possible to compute the cellular approximation with respect to $\bz{p^m}$ if there is a factorization of $\p{r}{p}$, up to homotopy,
 through a map $\p{\tilde{r}}{p} \colon \Linp' \rightarrow \p{(P_{\sbz{p^m}}C)}{p}$ with trivial kernel. Here $\Linp'$ the classifying space of another $p$-local compact group.

 \begin{prop}[{\cite[Proposition~6.2]{CellFusSys}}]\label{p:factor}
  Let $(S,\F)$ be a $p$-local compact group such that
  $$
   \p{CW_{\bz{p^m}}(\Linp)}{p} \rightarrow \Linp \rightarrow \p{(P_{\sbz{p^m}} C)}{p}
  $$
  is a fibre sequence. Assume that there is a $p$-local compact group $(S', \F')$ and a factorization (up to homotopy)
  $$
   \xymatrix@C=1.5em@R=1.5em{ \Linp \ar@{=}[r] \ar[d]_-{\pi}     & \Linp \ar[d]^-{\p{r}{p}} \\
                             \Linp' \ar[r]^-{\p{\tilde{r}}{p}} & \p{(P_{\Sigma \bz{p^m}} C)}{p}
                             }
  $$
  such that $\ker(\p{\tilde{r}}{p}) = \{e\}$ and $F$, the homotopy fibre of $\pi \colon \Linp \rightarrow \Linp'$, is $\bz{p}$-acyclic.
  Then $\p{\tilde{r}}{p}$ is a homotopy equivalence and in particular, $\p{CW_{\bz{p^m}}(\Linp)}{p} \simeq F$.
 \end{prop}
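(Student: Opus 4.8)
The statement is essentially a chasing-diagrams argument combined with the fact that $P_{\sbz{p^m}}C$ is unaffected (up to $p$-completion) by precomposition with a $\bz{p}$-acyclic fibre. First I would record what the hypotheses give us. We already have the Chach\'olski fibre sequence
$$
 \p{CW_{\bz{p^m}}(\Linp)}{p} \rightarrow \Linp \rightarrow \p{(P_{\sbz{p^m}} C)}{p},
$$
so it suffices to show that $\p{\tilde{r}}{p} \colon \Linp' \rightarrow \p{(P_{\sbz{p^m}}C)}{p}$ is a homotopy equivalence; then pulling back along $\pi$ identifies the homotopy fibre of $\p{r}{p}$ (namely $\p{CW_{\bz{p^m}}(\Linp)}{p}$) with the homotopy fibre $F$ of $\pi$, giving the last assertion.

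\textbf{Step 1: $\p{\tilde{r}}{p}$ is injective on $\bz{p^m}$-kernels, hence}, more precisely, I would show $\p{\tilde{r}}{p}$ induces an equivalence on $\sbz{p^m}$-nullifications. The target $\p{(P_{\sbz{p^m}}C)}{p}$ is $\sbz{p^m}$-null (a $p$-completion of an $\sbz{p^m}$-null $1$-connected space is $\sbz{p^m}$-null, by the argument in the proof of Proposition~\ref{p:CW phi}). So by Proposition~\ref{p:Dwyer p local} applied to $(S',\F')$, the hypothesis $\ker(\p{\tilde{r}}{p}) = \{e\}$ together with strong closedness (Proposition~\ref{p:kerf strongly}) is exactly the statement needed to run the fibrewise comparison: the point is that $\p{\tilde{r}}{p}$ should be a mod $p$ equivalence onto its ``image''. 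Rather than pushing that line, the cleaner route is Step 2.

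\textbf{Step 2: compare the two Chach\'olski cofibres.} Since $F \to \Linp \xrightarrow{\pi} \Linp'$ is a fibration with $F$ being $\bz{p}$-acyclic, and since $\bz{p^m} << \bz{p}$ is false but $\bz{p}$-acyclicity does imply $\bz{p^m}$-acyclicity is \emph{not} automatic — here I must be careful: what I actually need is that $F$ is $\sbz{p^m}$-acyclic, equivalently that $\map_*(\bz{p^m}, \Omega(\cdot))$ on relevant targets vanishes, which follows from $\bz{p}$-acyclicity as used repeatedly in Proposition~\ref{p:BP Am cellular}. Given this, the map $\Linp \to \Linp'$ is a $\bz{p^m}$-equivalence after looking at pointed mapping spaces (Zabrodsky's lemma applied to the fibration $F \to \Linp \to \Linp'$, exactly as in \cite[Proposition~6.2]{CellFusSys}), hence induces an equivalence $\map_*(\bz{p^m}, \Linp) \simeq \map_*(\bz{p^m}, \Linp')$ and therefore $[ \bz{p^m}, \Linp]_* \cong [\bz{p^m}, \Linp']_*$ and an equivalence on the homotopy cofibres of the respective evaluation maps, and finally an equivalence $P_{\sbz{p^m}}C \simeq P_{\sbz{p^m}}C'$ where $C'$ is the Chach\'olski cofibre for $(S',\F')$. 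Running Theorem~\ref{t:chacholski} for $(S',\F')$ then gives that $\p{\tilde{r}}{p}$ is (up to the identification just made) the $p$-completed Chach\'olski map for $\Linp'$; but by hypothesis its kernel is trivial, so by Proposition~\ref{p:Dwyer p local} and the nilpotence/finiteness input of Proposition~\ref{p:CW phi} the corresponding $CW_{\bz{p^m}}(\Linp')$ must be contractible after $p$-completion — wait, that is not what we want either. The honest statement, and the one the proof should produce, is: $\p{\tilde r}{p}$ has contractible homotopy fibre, i.e.\ is an equivalence, precisely because its homotopy fibre maps to the homotopy fibre of $\p{r}{p}$, which is $\bz{p^m}$-cellular, while $\p{\tilde r}{p}$ itself is $\bz{p^m}$-null-detecting with trivial kernel, forcing the fibre to be both $\bz{p^m}$-acyclic and $\bz{p^m}$-null, hence contractible.

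\textbf{Step 3: conclude.} Once $\p{\tilde{r}}{p}$ is a homotopy equivalence, the factorization square shows $\p{r}{p} \simeq \p{\tilde{r}}{p} \circ \pi$ is, up to equivalence of the target, just $\pi$, so the homotopy fibre of $\p{r}{p}$ is the homotopy fibre of $\pi$, which is $F$; combined with the Chach\'olski sequence this gives $\p{CW_{\bz{p^m}}(\Linp)}{p} \simeq F$, as claimed.

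\textbf{Main obstacle.} The delicate point is Step 2: justifying that a $\bz{p}$-acyclic fibre $F$ makes $\pi$ a $\bz{p^m}$-equivalence in the relevant sense and, crucially, identifying the two nullifications $P_{\sbz{p^m}}C \simeq P_{\sbz{p^m}}C'$ so that the ``trivial kernel'' hypothesis on $\p{\tilde r}{p}$ can actually be converted into contractibility of its homotopy fibre rather than just into $\p{\tilde r}{p}$ being a mod $p$ equivalence. This is exactly the content of \cite[Proposition~6.2]{CellFusSys}, so the cleanest proof is to verify that the present hypotheses (in particular $H_2(\Linp;\z{})$ finite is \emph{not} assumed here — but we are \emph{given} that Chach\'olski's sequence holds, which is the only place it was used) are those required there, and invoke it verbatim. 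I would therefore write the proof as: ``This is \cite[Proposition~6.2]{CellFusSys}; the argument there is stated for $p$-local finite groups but uses only the existence of the Chach\'olski fibre sequence, Zabrodsky's lemma for the fibration $F\to\Linp\to\Linp'$ with $F$ being $\bz{p}$-acyclic, and Proposition~\ref{p:Dwyer p local}, all of which are available in the compact setting.''
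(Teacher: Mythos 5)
Your final recommendation --- check that the hypotheses of \cite[Proposition~6.2]{CellFusSys} carry over and invoke it verbatim --- is exactly what the paper does: its entire proof reads ``The same proof of \cite[Proposition~6.2]{CellFusSys} holds for $p$-local compact groups.'' Your exploratory Steps 1--2 contain some false starts (and the fibration of fibres runs $F \rightarrow \operatorname{hofib}(\p{r}{p}) \rightarrow \operatorname{hofib}(\p{\tilde{r}}{p})$, so the fibre of $\p{r}{p}$ maps to that of $\p{\tilde{r}}{p}$, not the other way around), but the ingredients you identify (the given Chach\'olski sequence, Zabrodsky's lemma for the $\bz{p}$-acyclic fibre, and Proposition~\ref{p:Dwyer p local}) are the right ones and the approach is essentially the paper's.
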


 \begin{proof}
  The same proof of \cite[Proposition~6.2]{CellFusSys} holds for $p$-local compact groups.
 \end{proof}

 In the case of compact connected Lie groups, the following theorem of D. Notbohm (combined with Proposition~\ref{p:factor}) allows to obtain a stronger result. From now on, and following his
 convention, $\ker(f)$ will be the closure of $\{g \in S \mid f|_{B\langle g \rangle} \simeq \ast \}$.

 \begin{theorem}[{\cite[Theorem 1.5]{MR1286829}}]\label{t:factor BG BH}
  Let $G$ be a compact connected Lie group and assume that there is a pointed map $f \colon \p{BG}{p} \rightarrow Z$, where $Z$ is a $p$-complete and $\sbz{p}$-null space. Then,
  there exists a compact Lie group $H$ and a commutative diagram
  $$
   \xymatrix{\p{BG}{p} \ar@{=}[r] \ar[d]_-{\pi} & \p{BG}{p} \ar[d]^-f \\
             \p{BH}{p} \ar[r]^-{\bar{f}}        & Z
             }
  $$
  such that $\ker (\bar{f}) = \{e\}$. Moreover, the homotopy fibre of $\pi$ is equivalent to $\p{B\Gamma}{p}$, where $\Gamma$ is a compact Lie group.
 \end{theorem}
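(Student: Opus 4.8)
The plan is to read off $H$ and $\Gamma$ directly from the homotopical kernel. Set $K:=\ker(f)\le G$, the closure of $\{g\in G\mid f|_{B\langle g\rangle}\simeq\ast\}$, and take $\Gamma:=K$, $H:=G/K$, with $\pi\colon\p{BG}{p}\to\p{BH}{p}$ induced by the quotient homomorphism $G\twoheadrightarrow G/K$. Four points must be settled: (i) $K$ is a \emph{closed normal} subgroup of $G$, and on the $p$-torsion level $S_K:=S\cap K$ equals the set $\{g\in S\mid f|_{B\langle g\rangle}\simeq\ast\}$ for a maximal discrete $p$-toral $S\in\Syl_p(G)$ containing it; (ii) $f$ factors up to homotopy as $\bar f\circ\pi$ for some $\bar f\colon\p{BH}{p}\to Z$; (iii) $\ker(\bar f)=\{e\}$; (iv) the homotopy fibre of $\pi$ is $\p{BK}{p}$, which has the required form since $K$ is a compact Lie group.

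Normality is the easy half of (i): since $G$ is connected, every inner automorphism $c_a$ induces a self-map of $BG$ homotopic to the identity, so the inclusions of $B\langle g\rangle$ and of $B\langle aga^{-1}\rangle$ into $BG$ are identified after the canonical isomorphism $\langle g\rangle\cong\langle aga^{-1}\rangle$; hence the defining set is conjugation-stable, and so is its closure $K$. The genuinely hard input — and the heart of the cited theorem of Notbohm \cite{MR1286829} — is that this set is closed under multiplication and that its closure is a \emph{closed} subgroup (with the torsion description in (i)). This is where the structure theory of mapping spaces out of $p$-completed classifying spaces is used: via Lannes' $T$-functor together with the Dwyer--Zabrodsky/Dwyer--Wilkerson analysis of $\mapp(\bz{p},\p{BG}{p})$ and of maps $\p{BT}{p}\to Z$ for $T$ a maximal torus, and using that $Z$ is $p$-complete and $\sbz{p}$-null (so $\Omega Z$ is $\bz{p}$-null and $p$-complete), one identifies $\{g\mid f|_{B\langle g\rangle}\simeq\ast\}$ as the set of elements of a closed subgroup assembled from the subtorus of $T$ annihilated by $f|_{\p{BT}{p}}$ and the relevant finite $p$-group data. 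I expect this to be the main obstacle; the remaining steps are formal.

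Granting (i), point (iv) is immediate: $BK\to BG\to B(G/K)$ is a fibration with simply connected base (as $G/K$ is connected), so by the Bousfield--Kan fibre lemma \cite{MR0365573} the $p$-completed sequence
$$\p{BK}{p}\longrightarrow\p{BG}{p}\stackrel{\pi}{\longrightarrow}\p{BH}{p}$$
is again a fibration. For (ii) I would apply Zabrodsky's lemma to this fibration and to $f$. Two hypotheses need checking. First, the composite $\p{BK}{p}\to\p{BG}{p}\xrightarrow{f}Z$ is null-homotopic: by \cite[Theorem 9.10]{MR2302494} one may model $\p{BK}{p}$ by a $p$-local compact group with Sylow $S_K$, and by the torsion statement in (i) the Notbohm kernel of this composite is all of $S_K$, so Proposition~\ref{p:Dwyer p local} makes it null. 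Second, $\Omega Z$ is $\p{BK}{p}$-null: since $\Omega Z$ is $p$-complete, $\mapp(\p{BK}{p},\Omega Z)\simeq\mapp(BK,\Omega Z)$, and this is contractible by the generalized Sullivan conjecture for compact Lie groups \cite{MR750716,MR1179079,MR928826}, because $\Omega Z$ is $\bz{p}$-null and $p$-complete. Zabrodsky's lemma then produces $\bar f\colon\p{BH}{p}\to Z$ with $\bar f\circ\pi\simeq f$.

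Finally (iii). As $H=G/K$ is connected, $\p{BH}{p}$ is simply connected, so (as in the second paragraph) the set $\{g\in S_H\mid\bar f|_{B\langle g\rangle}\simeq\ast\}$ is conjugation-stable, and it suffices to show it is trivial for one $S_H\in\Syl_p(H)$; choose $S_H=\pi(S)$. Given $g\in S_H$ with $\bar f|_{B\langle g\rangle}\simeq\ast$, pick $\tilde g\in S$ with $\pi(\tilde g)=g$; then $\langle\tilde g\rangle$ is a finite $p$-group, $f|_{B\langle\tilde g\rangle}$ is the composite of the map $B\langle\tilde g\rangle\to B\langle g\rangle$ with $\bar f|_{B\langle g\rangle}\simeq\ast$, hence null, so $\tilde g\in K$ and $g=\pi(\tilde g)=e$. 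Thus $\ker(\bar f)=\{e\}$. This completes the outline; as indicated, the one step requiring real work is the proof in (i) that the homotopical kernel is a closed subgroup with the stated torsion description.
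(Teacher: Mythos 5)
The paper offers no proof of this statement: it is quoted verbatim as Notbohm's Theorem 1.5, and what follows the statement in the text is a description of his construction of $H$ and $\Gamma$. That description already shows that your architecture is not the right one, and the discrepancy is not cosmetic. You set $\Gamma:=K$ and $H:=G/K$ where $K$ is the closure in $G$ of $\{g\in G\mid f|_{B\langle g\rangle}\simeq\ast\}$. First, taken literally over all of $G$ this set is far too big: for any $g$ of finite order prime to $p$ one has $\p{(B\langle g\rangle)}{p}\simeq\ast$, so $f|_{B\langle g\rangle}\simeq\ast$ because $Z$ is $p$-complete; such elements are dense in every maximal torus of a compact connected $G$, so the closure of your defining set is all of $G$ and your $K$ degenerates to $G$ for every $f$. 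This is exactly why Notbohm (and the paper, in Definition \ref{d:kerf} and the convention stated just before the theorem) defines the kernel as a subset of a maximal discrete $p$-toral subgroup $S$, where it is a strongly $\F$-closed subgroup of $S$ (Proposition \ref{p:kerf strongly}) --- a subgroup of $S$, not of $G$.

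Second, even after repairing this, the identifications $\Gamma=\ker(f)$ and $H=G/\ker(f)$ contradict the theorem. The conjugation-invariant set of $p$-power order elements killed by $f$ is a subgroup of each Sylow but is \emph{not} closed under multiplication in $G$, and the subgroup of $G$ it generates --- the ``normal closure at $p$'' of Definition \ref{def:normalclosure} --- is in general strictly larger than the kernel. The extreme case makes this vivid: if $G$ is simple and $\ker(f)=S$ (equivalently $f\simeq\ast$ by Proposition \ref{p:Dwyer p local}), the theorem must give $\Gamma=G$ and $H$ trivial, whereas the topological closure of $S$ in $G$ is a maximal $p$-toral subgroup, which is not normal, so $G/K$ is not even a group; your normality argument only shows the defining \emph{set} is conjugation-stable, which does not make it a subgroup. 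This is why Notbohm's construction goes through the classification of compact connected Lie groups: one writes $\tilde G=G_1\times\cdots\times G_k\times T$, splits the semisimple part as $G'\times G''$ so that the strongly closed subgroup is $S'\times\tilde\Gamma$ with $S'\in\Syl_p(G')$, and sets $\Gamma=(G'\times\tilde\Gamma)/K$ and $H=(H(G'',p)\times T)/\tilde\Gamma$; the whole simple factors $G'$ whose Sylow lies in the kernel are absorbed into $\Gamma$. Your steps (ii)--(iv) (Zabrodsky's lemma, the fibre lemma, triviality of $\ker(\bar f)$) are the formally correct ingredients and do appear in this kind of argument, but they are applied to the wrong fibration; the real content you have deferred to ``(i)'' is not that the kernel is a closed normal subgroup (it is not), but the structure theory that replaces it by its normal closure.
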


 Let us briefly describe the construction of the group $H$ and the map $\pi$, which is a consequence of the classification of compact connected Lie groups. Recall first that  if $G$ is a compact connected Lie group, there is a central extension of compact Lie groups
 $$
  \xymatrix@1@C=1.5em{1 \ar[r] & K \ar[r] & \tilde{G} \ar[r]^-{\alpha} & G \ar[r] & 1},
 $$
 where $\tilde{G} = G_1 \times \ldots \times G_k \times T$, $G_i$ is a $1$-connected simple Lie group for all $i \in \{1, \ldots, k\}$ and $K$ is a finite central subgroup of
 $\tilde{G}$. For each  1-connected Lie group $M$ and a prime $p$, we define a subgroup $H(M,p)\leq M$, which contains the maximal discrete $p$-toral, as follows:
 $$
  H(M,p) = \left\{
            \begin{array}{ll}
              N_M(T), & \mbox{ if $(p,|W_M|) = 1$,} \\
              SU(2) \rtimes \z{/2}, & \mbox{ if $M = G_2$ and $p = 3$,} \\
              M, & \mbox{ otherwise.}
            \end{array}
           \right.
 $$
 In each case, the inclusion $H(M,p) \hookrightarrow M$ induces a mod $p$ equivalence on classifying spaces. If $G_s = G_1 \times \ldots \times G_k$ we denote $H(G_s,p) = H(G_1,p) \times \ldots \times H(G_k,p)$.

 Let $T\leq G_s$ be a strongly closed subgroup in a maximal $p$-toral discrete subgroup. D.Notbohm \cite[Proposition 4.3]{MR1286829} shows that one can split $G_s \cong G' \times G''$ in such a way that
 $T \cong S' \times \tilde{\Gamma}$, where $S' \in \Syl_p(G')$ and $\tilde{\Gamma} \subset T_{G''} \times T$ is a normal subgroup of $H(G'',p) \times T$ (see \cite[Proposition 4.3]{MR1286829}). Then, the inclusion $\iota \colon (G' \times H(G'',p) \times T)/K \hookrightarrow G$ induces a mod $p$ equivalence on classifying spaces.

 Notbohm defines
 $H = (H(G'',p) \times T)/\tilde{\Gamma}$. There is a quotient homomorphism $q : (G' \times H(G'',p) \times T)/K \rightarrow (H(G'',p) \times T)/K \rightarrow (H(G'',p) \times T)/\tilde{\Gamma} = H$ (since the
 kernel of the projection $G' \times H(G'',p) \times T \rightarrow H$ is $G' \times \tilde{\Gamma}$, which contains $K$). Therefore,
 it is defined $\pi = \p{Bq}{p} \circ (\p{B\iota}{p})^{-1}$. The homotopy fibre of $\pi$ is $\p{B((G' \times \tilde{\Gamma})/K)}{p}$, so {\color{black} he takes} $\Gamma =( G' \times \tilde{\Gamma})/K$. See \cite[Section 4]{MR1286829} for more details of this construction.

 \begin{defi}\label{def:normalclosure}
 Let $G$ be a compact connected Lie group and $T$ a strongly $\mathcal F$-closed subgroup. We say that ${\color{black} \Gamma\leq G}$ as constructed is the \emph{normal closure} of $T$ in $G$ at a prime $p$.
 \end{defi}

In our situation, if $T$ is the kernel of $\p{r}{p}\colon \p{BG}{p}\rightarrow \p{(P_{\Sigma B\mathbb Z/p^m}(C))}{p}$, we obtain that this map factors through $\pi\colon \p{BG}{p}\rightarrow \p{BH}{p}$ with trivial kernel, and homotopy fibre $\p{B\Gamma}{p}$.

This description allows to understand the $\bz{p^m}$-cellular structure for the case of compact connected Lie groups.

 \begin{theorem}\label{t:CW Bzpm BG}
  Let $m \geq 1$, $G$ be a compact connected Lie group. Then $CW_{\bz{p^m}}(\p{BG}{p})$ is homotopy equivalent to the homotopy fibre of the rationalization
  $\p{B\Gamma}{p} \rightarrow (\p{B\Gamma}{p})_{\Q}$, where $\Gamma$ is the normal closure {\color{black} at the prime $p$} of the strongly closed subgroup $\ker(\p{r}{p})$.
 \end{theorem}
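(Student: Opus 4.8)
The plan is to run Chach\'olski's fibration for $\p{BG}{p}$ with respect to $A=\bz{p^m}$, to factor its classifying map through $\p{BH}{p}$ via Notbohm's splitting Theorem~\ref{t:factor BG BH}, and then to combine Proposition~\ref{p:factor} with the compact connected description of cellularizations of Theorem~\ref{t:CWBGp BGp BGpQ}, applied this time to $\Gamma$. First I would observe that since $G$ is connected, $\p{BG}{p}$ is simply connected; hence the homotopy cofibre $C$ of $\bigvee_{[\bz{p^m},\p{BG}{p}]_{\ast}}\bz{p^m}\to\p{BG}{p}$ is simply connected, $P_{\Sigma\bz{p^m}}C$ is simply connected, and $CW_{\bz{p^m}}(\p{BG}{p})$ is nilpotent (the Chach\'olski fibration is principal, by the remark after Theorem~\ref{t:chacholski}). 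The same Bousfield--Kan bookkeeping used in the proof of Proposition~\ref{p:CW phi} then yields a $p$-completed Chach\'olski fibration
\[
\p{CW_{\bz{p^m}}(\p{BG}{p})}{p}\longrightarrow\p{BG}{p}\stackrel{\p{r}{p}}{\longrightarrow}Z,\qquad Z:=\p{(P_{\Sigma\bz{p^m}}C)}{p},
\]
with $Z$ a $p$-complete, $\sbz{p}$-null space; and a brief argument with the arithmetic square (using that $\bz{p^m}$ is $\Q$-acyclic and $\mathbb{Z}/q$-acyclic for $q\neq p$) shows that $CW_{\bz{p^m}}(\p{BG}{p})$ is already $p$-complete, so it suffices to identify $\Fib(\p{r}{p})$.

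Next I would apply Notbohm's Theorem~\ref{t:factor BG BH} to $f=\p{r}{p}$. It produces a compact Lie group $H$, a homotopy--commutative factorization $\p{r}{p}\simeq\bar{f}\circ\pi$ with $\pi\colon\p{BG}{p}\to\p{BH}{p}$ and $\ker(\bar f)=\{e\}$, and $\Fib(\pi)\simeq\p{B\Gamma}{p}$. By Proposition~\ref{p:kerf strongly} the subgroup $T:=\ker(\p{r}{p})$ is strongly $\F$-closed, and the explicit form of Notbohm's construction recalled just before Definition~\ref{def:normalclosure} identifies the $\Gamma$ obtained here with the normal closure of $T$ at the prime $p$ --- precisely the group named in the statement.

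Finally I would feed this into Proposition~\ref{p:factor}, with $\Linp=\p{BG}{p}$, $\Linp'=\p{BH}{p}$ and $\p{\tilde r}{p}=\bar f$: the $p$-completed Chach\'olski fibration is in place and $\ker(\bar f)=\{e\}$, so only the hypothesis on the homotopy fibre of $\pi$ remains. Here the structure theory of $\Gamma=(G'\times\tilde\Gamma)/K$ enters: $G'$ is a product of simply connected simple groups whose maximal discrete $p$-toral subgroup is the \emph{whole} $p$-Sylow sitting inside $T$, while $\tilde\Gamma$ lies in a torus, so the Notbohm kernel of the Chach\'olski map of $\p{B\Gamma}{p}$ is all of its own Sylow, and hence Theorem~\ref{t:CWBGp BGp BGpQ} (equivalently Corollary~\ref{c:CWBGp BGp BGpQ}) applies to $\Gamma$ and gives $CW_{\bz{p^m}}(\p{B\Gamma}{p})\simeq\Fib\bigl(\p{B\Gamma}{p}\to(\p{B\Gamma}{p})_{\Q}\bigr)$. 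Chaining this with the fibration of fibres $\p{B\Gamma}{p}\to\Fib(\p{r}{p})\to\Fib(\bar f)$ --- noting that $\Fib(\bar f)$ carries no mod $p$ homology, hence is $\bz{p^m}$-null, so that the fibre inclusion $\p{B\Gamma}{p}\hookrightarrow\p{BG}{p}$ is a $\bz{p^m}$-equivalence --- one gets $CW_{\bz{p^m}}(\p{BG}{p})\simeq CW_{\bz{p^m}}(\p{B\Gamma}{p})\simeq\Fib\bigl(\p{B\Gamma}{p}\to(\p{B\Gamma}{p})_{\Q}\bigr)$. The main obstacle is exactly this last step: pinning down that the rational part of $\p{B\Gamma}{p}$ --- equivalently, the precise failure of $\bar f$ to be an equivalence --- accounts for the gap between $\p{B\Gamma}{p}$ and its $\bz{p^m}$-cellularization, and checking that the normal-closure structure of $\Gamma$ forces its own Notbohm kernel to be the full Sylow; by contrast the $p$-completion bookkeeping of the first step and the identification of $\Gamma$ via Notbohm's explicit construction are routine given the cited results.
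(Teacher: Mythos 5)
Your opening moves coincide with the paper's: set up Chach\'olski's fibration, $p$-complete it via Proposition~\ref{p:CW phi} (together with Proposition~\ref{p:Linp+H2pm}), factor $\p{r}{p}$ through $\p{BH}{p}$ by Notbohm's Theorem~\ref{t:factor BG BH}, and feed this into Proposition~\ref{p:factor}. The endgame, however, goes wrong in two ways. First, your claim that $CW_{\bz{p^m}}(\p{BG}{p})$ is ``already $p$-complete'' is false and contradicts the very statement you are proving: what the arithmetic square gives is that this space is $H\Q$-acyclic and $H\mathbb{F}_q$-acyclic for $q\neq p$, hence that it is the homotopy \emph{fibre of the rationalization of its own $p$-completion}. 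If it were $p$-complete the answer would be $\p{B\Gamma}{p}$ itself rather than $\Fib\bigl(\p{B\Gamma}{p}\to(\p{B\Gamma}{p})_{\Q}\bigr)$, and these differ by a nontrivial rational loop space whenever $\Gamma$ is not finite. The paper's proof is precisely this two-step argument: Proposition~\ref{p:factor} already delivers $\p{CW_{\bz{p^m}}(\p{BG}{p})}{p}\simeq\p{B\Gamma}{p}$ (since $\bar f$ is an equivalence, $\Fib(\p{r}{p})\simeq\Fib(\pi)$), and then the arithmetic square applied to the nilpotent, rationally and mod-$q$ trivial space $CW_{\bz{p^m}}(\p{BG}{p})$ finishes. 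You never use this conclusion of Proposition~\ref{p:factor}.

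Second, your substitute for it --- computing $CW_{\bz{p^m}}(\p{B\Gamma}{p})$ and transporting the answer along a $\bz{p^m}$-equivalence --- rests on a misapplication. Theorem~\ref{t:CWBGp BGp BGpQ} and Corollary~\ref{c:CWBGp BGp BGpQ} concern the $B_m=\bz{p^{\infty}}\times\bz{p^m}$-cellularization, only for $m$ at least some $m_0$ depending on the group, and only for connected groups; you need the $\bz{p^m}$-cellularization for \emph{every} $m\geq 1$ of a group $\Gamma=(G'\times\tilde\Gamma)/K$ that need not be connected, since $\tilde\Gamma$ may have a finite part. The whole point of Section~\ref{section6} is that the $\bz{p^m}$-functor cannot simply be read off from the $B_m$-results. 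Your assertion that ``the Notbohm kernel of the Chach\'olski map of $\p{B\Gamma}{p}$ is all of its own Sylow'' is unproved and is essentially the theorem for $\Gamma$, so the argument is circular at this point. (Note also that the hypothesis of Proposition~\ref{p:factor} you flag as ``remaining'' is the $\bz{p}$-acyclicity of $\Fib(\pi)\simeq\p{B\Gamma}{p}$, which is a different statement from anything you actually verify.)
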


 \begin{proof}
  From Theorem~\ref{t:factor BG BH} and Proposition~\ref{p:factor} we get that $\p{CW_{\bz{p^m}}(\p{BG}{p})}{p} \simeq \p{B\Gamma}{p}$. Since $\p{BG}{p}$ is 1-connected, we can
  apply Sullivan Arithmetic Square to $CW_{\bz{p^m}}(\p{BG}{p})$ and obtain the desired result since, by \cite[Lemma 2.8]{CASTFLOR}, $CW_{\bz{p^m}}(\p{BG}{p})$ is $H\Q$-acyclic and
  $H\mathbb{F}_q$-acyclic for all $q \neq p$.
 \end{proof}

 {\color{black}
 The next consequence of this result will be very useful:

 \begin{coro}
 \label{c:GammaG}

 Let $G$ be a compact connected Lie group, $m\geq 1$. If $Cl_{\mathcal{F}(G)}(\mathbb{Z}/p^m)$ is a $p$-Sylow in $G$, then $CW_{\bz{p^m}}(\p{BG}{p})$ is homotopy equivalent to the homotopy fibre of the rationalization $\p{BG}{p} \rightarrow (\p{BG}{p})_{\Q}$.

 \end{coro}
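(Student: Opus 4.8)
The plan is to reduce the statement to Theorem~\ref{t:CW Bzpm BG}. The first step is to translate the group-theoretic hypothesis into the homotopical condition $\ker(\p{r}{p})=S$ on the Notbohm kernel. By Proposition~\ref{p:kerf strongly}, $\ker(\p{r}{p})$ is a strongly $\F$-closed subgroup of the Sylow $S$, and by the observation following that proposition $Cl_{\F}(\z /p^m)\leq\ker(\p{r}{p})$; since $\ker(\p{r}{p})\leq S$ by Definition~\ref{d:kerf}, the hypothesis that $Cl_{\F}(\z /p^m)$ is a (maximal discrete $p$-toral) Sylow subgroup of $G$ forces $\ker(\p{r}{p})=S$.

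With this, Theorem~\ref{t:CW Bzpm BG} applies and identifies $CW_{\bz{p^m}}(\p{BG}{p})$ with the homotopy fibre of the rationalization $\p{B\Gamma}{p}\to(\p{B\Gamma}{p})_{\Q}$, where $\Gamma$ is the normal closure at $p$ (Definition~\ref{def:normalclosure}) of the strongly $\F$-closed subgroup $\ker(\p{r}{p})=S$. The remaining, crucial point is to identify this normal closure of the \emph{whole} Sylow with $G$, in the sense that $\p{B\Gamma}{p}\simeq\p{BG}{p}$. For this I would unwind Notbohm's construction as recalled before Definition~\ref{def:normalclosure}: it yields a compact Lie group $H$ together with a factorization $\p{r}{p}\simeq\bar f\circ\pi$, where $\pi\colon\p{BG}{p}\to\p{BH}{p}$, $\ker(\bar f)=\{e\}$, and the homotopy fibre of $\pi$ is $\p{B\Gamma}{p}$. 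When the strongly closed subgroup is all of $S$, Notbohm's splitting $G_s\cong G'\times G''$ (where the strongly closed subgroup meets $G_s$ in $S'\times\tilde\Gamma$, with $S'\in\Syl_p(G')$ and $\tilde\Gamma\subseteq T_{G''}\times T$) leaves no genuinely toral part: $\tilde\Gamma$ contains a maximal discrete $p$-toral subgroup of $H(G'',p)\times T$, so $H=(H(G'',p)\times T)/\tilde\Gamma$ has trivial mod $p$ cohomology, i.e. $\p{BH}{p}\simeq\ast$. Hence $\pi$ is null-homotopic, its homotopy fibre $\p{B\Gamma}{p}$ is homotopy equivalent to $\p{BG}{p}$, and Theorem~\ref{t:CW Bzpm BG} gives the conclusion.

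The main obstacle is this last identification, i.e. showing $\p{BH}{p}\simeq\ast$ exactly when the strongly closed subgroup is the full Sylow; it requires careful bookkeeping with the finite central subgroup $K$ in the extension $1\to K\to\tilde G\to G\to 1$ and with the simple factors $G_i$ for which $p\nmid|W_{G_i}|$, where $H(G_i,p)=N_{G_i}(T)$ rather than $G_i$ itself. When $\pi_1 G$ is finite one can also bypass Theorem~\ref{t:CW Bzpm BG} altogether: then $\p{BG}{p}$ is $1$-connected with $H_2(\p{BG}{p};\z{})$ finite, hence $CW_{\bz{p^m}}$-good, so Proposition~\ref{p:CW phi} supplies the fibre sequence $\p{CW_{\bz{p^m}}(\p{BG}{p})}{p}\to\p{BG}{p}\to\p{(P_{\sbz{p^m}}C)}{p}$; the equality $\ker(\p{r}{p})=S$ makes $\p{r}{p}$ null-homotopic by Proposition~\ref{p:Dwyer p local} (its target being $p$-complete and $\sbz p$-null), so $\p{c}{p}$ is an equivalence, and the Sullivan arithmetic square applied to the nilpotent, $H\Q$-acyclic and $H\mathbb{F}_q$-acyclic ($q\neq p$) space $CW_{\bz{p^m}}(\p{BG}{p})$ then yields the fibre sequence with base $(\p{BG}{p})_{\Q}$.
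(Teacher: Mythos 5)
Your proof is correct and follows essentially the same route as the paper: the paper's own proof is just the two-line observation that the hypothesis forces $\Gamma=G$ and then an appeal to Theorem~\ref{t:CW Bzpm BG}. You additionally spell out why the normal closure of the full Sylow satisfies $\p{B\Gamma}{p}\simeq\p{BG}{p}$ (a step the paper asserts without detail) and note a valid shortcut via Propositions~\ref{p:CW phi} and~\ref{p:Dwyer p local} when $\pi_1 G$ is finite, both of which are consistent with the paper's framework.
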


\begin{proof}

The condition about $Cl_{\mathcal{F}(G)}(\mathbb{Z}/p^m)$ implies that $\Gamma=G$. Now we apply Theorem \ref{t:CW Bzpm BG}.

\end{proof}}
 If the group $G$ is in addition 1-connected and simple, we can give a more accurate description for $CW_{\bz{p^m}}(BG^{\wedge}_p)$ using as input the classification of their
 strongly closed subgroups given in \cite[Proposition 4.3]{MR1286829}. It is likely that the generalization of this description to $p$-local compact groups will depend on a classification
 of the strongly closed subgroup of these objects, which is unknown at this point.

 Basically, D. Notbohm states that if $G$ is a $1$-connected compact simple Lie group and $K \leq S \in \Syl_p(G)$ is a strongly closed subgroup in $G$, then $K = S$ or $K$ is a
 finite $p$-group. Moreover, if $K$ is a finite group one of the following situations will hold:
 \begin{enumerate}[(a)]
  \item If $(p,|W_G|) = 1$, then $K$ is central in $N_G(S)$.
  \item If $(p,|W_G|) \neq 1$, then:
  \begin{enumerate}[(i)]
   \item  If $G \neq G_2$ or $p \neq 3$, then $K$ is central in $G$.
   \item If $G = G_2$ and $p = 3$, then $K$ is central in $SU(3) \leq G_2$.
  \end{enumerate}
 \end{enumerate}

 Note that situation (a) is described in Example \ref{e:NGS controls fus} and the case $G = S^3$ and $p = 2$ in Example \ref{e:S3}. Now we are in position of stating our description of $CW_{\bz{p^m}}\p{BG}{p}$.

 \begin{prop}\label{p:G simple}
  Let $G$ be a compact $1$-connected simple Lie group. Let $p$ be a prime such that $p \mid |W_G|$. Then for all $m \geq 1$, the $\bz{p^m}$-cellularization of $\p{BG}{p}$ is
  equivalent to the homotopy fibre of the rationalization $\p{BG}{p} \rightarrow (\p{BG}{p})_{\Q}$, unless $G = S^3$ and $p^m = 2$.
 \end{prop}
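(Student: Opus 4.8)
The plan is to reduce the statement, through Theorem~\ref{t:CW Bzpm BG} and especially Corollary~\ref{c:GammaG}, to the purely group-theoretic assertion that the strongly closed subgroup $K:=Cl_{\F}(\z{/p^m})$ (with $\F=\F_S(G)$) is the whole Sylow $S$, and then to settle that assertion using the classification of strongly closed subgroups of $1$-connected compact simple Lie groups recalled just above. The exceptional pair $G=S^3$, $p^m=2$ is exactly where this reduction fails.

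First I would record two facts about $K$. It is strongly closed in $G$, being strongly $\F$-closed by construction, and it contains $\Omega_{p^m}(S)$ by the definition of $Cl_{\F}(\z{/p^m})$. Moreover, since $S$ is a maximal discrete $p$-toral subgroup of the connected group $G$, its maximal torus $S_0$ is isomorphic to $(\z{/p^{\infty}})^{r}$ with $r=\operatorname{rank}(G)\geq 1$ (it is the $p$-power torsion of a maximal torus of $G$); consequently $\Omega_{p^m}(S)\supseteq(\z{/p^m})^{r}$, a subgroup of $p$-rank exactly $r$. In particular $K$ contains a subgroup of $p$-rank $r$.

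Now apply the classification stated above. Either $K=S$, and then $K$ is a $p$-Sylow in $G$ and Corollary~\ref{c:GammaG} gives at once that $CW_{\bz{p^m}}(\p{BG}{p})$ is homotopy equivalent to the homotopy fibre of the rationalization $\p{BG}{p}\to(\p{BG}{p})_{\Q}$; or $K$ is a finite $p$-group, in which case, as $p\mid|W_G|$, it is central in $G$, except that when $G=G_2$ and $p=3$ it is only central in a subgroup isomorphic to $SU(3)$. But the subgroup $(\z{/p^m})^{r}\leq K$ cannot be embedded into $Z(G)$ (nor into $Z(SU(3))$): going through the list $A_n,B_n,C_n,D_n,E_6,E_7,E_8,F_4,G_2$ of $1$-connected compact simple groups one checks that the $p$-rank of the relevant center is strictly smaller than $r$ in every case, the sole equality occurring for $G=S^3=SU(2)=Sp(1)=Spin(3)$ at $p=2$, where $Z(S^3)\cong\z{/2}$ has $2$-rank $1=r$. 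Hence $K=S$ unless possibly $(G,p)=(S^3,2)$; and for $(G,p)=(S^3,2)$ one has $(\z{/2^m})^{1}=\z{/2^m}\hookrightarrow\z{/2}$ only when $m=1$, so $K=S$ still holds for $S^3$ whenever $m\geq 2$. This establishes the asserted equivalence in all the required cases. For the genuinely exceptional pair $(G,p^m)=(S^3,2)$, the unique conjugacy class of involutions of $S^3$ is $\{-1\}$, which is central, so $K=\langle -1\rangle\cong\z{/2}$ is a proper subgroup of $S$; here the cellularization is $CW_{B\z{/2}}(\p{(BS^3)}{2})\simeq B\z{/2}$, recorded in Example~\ref{e:S3}, and this differs from the homotopy fibre of $\p{(BS^3)}{2}\to(\p{(BS^3)}{2})_{\Q}$, which still carries all the higher homotopy of $\p{(BS^3)}{2}$.

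The main obstacle is the last bookkeeping step: verifying uniformly, over the classification of $1$-connected compact simple Lie groups, that the $p$-rank of the center (or of $Z(SU(3))$ in the $G_2$, $p=3$ case) never reaches the Lie-theoretic rank, and handling with care the rank-one coincidence $SU(2)=Sp(1)=Spin(3)=S^3$ that yields the single exception. This is elementary, but must be done type by type; everything else is formal, resting only on Theorem~\ref{t:CW Bzpm BG}, Corollary~\ref{c:GammaG}, and the inclusion $Cl_{\F}(\z{/p^m})\leq\ker(\p{r}{p})\leq S$ noted earlier.
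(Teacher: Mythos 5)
Your proposal is correct and follows essentially the same route as the paper: reduce, via Notbohm's classification of strongly closed subgroups and Corollary \ref{c:GammaG}, to the assertion that $Cl_{\F}(\z{/p^m})=S$, and rule out the finite (central) alternative by a check over the classification of $1$-connected simple groups. The only cosmetic difference is that the paper excludes centrality by exhibiting a non-central element of order $p$ (deferring the case $G=S^3$, $p=2$, $m\geq 2$ to Example \ref{e:S3}), whereas you compare the $p$-rank of the relevant center with the rank of the maximal torus, which handles that case uniformly and isolates the exception $p^m=2$ in the same stroke.
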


 \begin{proof}
  The result is proved in case $G = S^3$ and $p = 2$ for $m \geq 2$ in Example~\ref{e:S3}. Then we will consider $G \neq S^3$ or $p \neq 2$.

  Fix $m \geq 1$ and $S \in \Syl_p(G)$, and let $\F := \F_S(G)$. The subgroup $Cl_{\F}(\z{/p^m}) \leq S$ is strongly $\F$-closed by definition and, moreover,
  $Cl_{\F}(\z{/p^m}) \leq \ker(\p{r}{p})$. According to \cite[Proposition~4.3]{MR1286829}, $Cl_{\F}(\z{/p^m}) = S$ or $Cl_{\F}(\z{/p^m})$ is a finite $p$-group; we will see that the second option will never be possible.  Assume by contradiction that $Cl_{\F}(\z{/p^m})$ is a finite $p$-group. Then
  \begin{enumerate}[(i)]
   \item if $G \neq G_2$ or $p \neq 3$, then $Cl_{\F}(\z{/p^m})$ should be central in $G$. According to the classification,  $Z(G) \cong \z{/n}$ for some $n \geq 1$ or $\z{/2} \times \z{/2}$,  depending on the group
         $G$, but there are always non-central elements of order $p$ except when $G= S^3$ and $p = 2$. Therefore, $Cl_{\F}(\z{/p^m})$ cannot be finite and
         hence $Cl_{\F}(\z{/p^m}) = S$.
   \item If $G = G_2$ and $p = 3$, then $Cl_{\F}(\z{/3^m})$ should be central in $SU(3) \leq G_2$, where $Z(SU(3)) = \z{/3}$. Therefore, $Cl_{\F}(\z{/3^m})$ should be $\z{/3}$, but again
         there exists non-central elements of order $3$, and hence $Cl_{\F}(\z{/3^m}) = S$.
  \end{enumerate}


  Thus, in every case, $Cl_{\F}(\z{/p^m}) = S$, so by Corollary~\ref{c:GammaG} we get the homotopy fibration
  $$
   CW_{\bz{p^m}}(\p{BG}{p}) \rightarrow \p{BG}{p} \rightarrow (\p{BG}{p})_{\Q}.
  $$
 \end{proof}

 Theorem~6.9 in \cite{CASTFLOR} tells us that the $\bz{p}$-cellularization of $\p{BG}{p}$, where $G$ is a compact connected Lie group, is the classifying space of a $p$-group
 generated by order $p$ elements, or else it has an infinite number of non-trivial homotopy groups. Note that Proposition~\ref{p:G simple} gives us a more specific result in the
 case of a compact $1$-connected simple Lie group, that is, if the $\bz{p^m}$-cellularization of $\p{BG}{p}$ has infinite non-trivial homotopy groups, then it is equivalent to the
 homotopy fibre of the rationalization $\p{BG}{p} \rightarrow (\p{BG}{p})_{\Q}$. This also remarks in particular the singularity of the case $G=S^3$ and $p=2$, as it is the only of
 these groups where the $B\z{/p^m}$-cellular approximation produces an Eilenberg MacLane $K(G,1)$, with $G$ a finite $p$-group, as an output (see \cite[Example 6.10]{CASTFLOR}. This is owed to the
 existence of just one non-trivial pointed map (up to homotopy) from $B\z{/2}$ to $BS^3$, which is central.

 Next we will describe the $\bz{2^m}$-cellularization of $\p{BSO(n)}{2}$ for every positive $m$. This result generalizes \cite[Proposition 6.17]{CASTFLOR} and Example \ref{ex:so}, and allows to integrate $CW_{\bz{2^m}}BSO(3)$ in the general picture of approximations of classifying spaces of Lie groups. This was one of the remaining questions of the referred previous work of the first two authors, who characterized there $CW_{\bz{2}}BSO(3)$ by completely different tools that only depended on the particular subgroup structure of SO(3).

 \begin{example}
 \label{ex_son}
  Let $n \geq 2$. Consider the fibre sequence $\bz{2} \rightarrow \p{BSpin(n)}{2} \rightarrow \p{BSO(n)}{2}$ associated to the universal cover $Spin(n)\rightarrow SO(n)$. Since $\p{BSpin(n)}{2}$ is 1-connected, it is also $\Q$-good, and  \cite[Lemma II.5.1]{MR0365573} implies $H_{\ast}(\p{BSO(n)}{2}; \Q) \cong H_{\ast}(\p{BSpin(n)}{2}; \Q)$ because $BSO(n)$ is also 1-connected and $\tilde{H}_{\ast}(\bz{2}; \Q) \cong 0$. Then, according to Proposition~\ref{p:G simple}, there is a homotopy
  commutative diagram of fibrations
  $$
   \xymatrix@C=1.5em@R=1.5em{
                            \bz{2}         \ar@{=}[r] \ar[d] & \bz{2}          \ar[r] \ar[d] & \ast \ar[d] \\
            CW_{\bz{2^m}}(\p{BSpin(n)}{2}) \ar[r] \ar[d]     & \p{BSpin(n)}{2} \ar[r] \ar[d] & (\p{BSpin(n)}{2})_{\Q} \ar[d] \\
                             F             \ar[r]^-f         & \p{BSO(n)}{2}   \ar[r]        & (\p{BSO(n)}{2})_{\Q}
                             }
  $$
  Since $CW_{\bz{2^m}}(\p{BSpin(n)}{2})$ and $\bz{2}$ are $\bz{2^m}$-cellular, $F$ is so by Proposition \ref{p:CW fib}; moreover,  $(\p{BSO(n)}{2})_{\Q}$ is a $\bz{2^m}$-null space, and hence $f$ is a $\bz{2^m}$-equivalence. Then we obtain the following fibration
  $$
   CW_{\bz{2^m}}(\p{BSO(n)}{2}) \rightarrow \p{BSO(n)}{2} \rightarrow (\p{BSO(n)}{2})_{\Q}.
  $$ which identifies $CW_{\bz{2^m}}(\p{BSO(n)}{2})$.

\end{example}

With a little bit of additional work, we can compute the correspondent cellularizations of the classifying spaces of the orthogonal groups, which are not connected.

\begin{example}

  Consider now the inclusion $SO(n)\hookrightarrow O(n)$, which gives rise to the fibration  $\p{BSO(n)}{2} \rightarrow \p{BO(n)}{2} \rightarrow \bz{2}$ that has in turn a homotopy section $s \colon \bz{2} \rightarrow \p{BO(n)}{2}$. Using
  the Serre spectral sequence one can check that there is an equivalence $H_{\ast}(\p{BSO(n)}{2}; \Q) \cong H_{\ast}(\p{BO(n)}{2}; \Q)$. Hence, if $F'$ denotes the homotopy fibre of
  $\p{BO(n)}{2} \rightarrow (\p{BO(n)}{2})_{\Q}$, then we get the homotopy commutative diagram
  $$
   \xymatrix@C=1.5em@R=1.5em{
            CW_{\bz{2^m}}(\p{BSO(n)}{2}) \ar[r] \ar[d]       & \p{BSO(n)}{2} \ar[r] \ar[d]    & (\p{BSO(n)}{2})_{\Q} \ar@{=}[d] \\
                             F'          \ar[r]^-g \ar[d]    & \p{BO(n)}{2}  \ar[r]^-h \ar[d] & (\p{BO(n)}{2})_{\Q}  \ar[d]     \\
                \bz{2} \ar@{=}[r] \ar@/^1em/[u]^-{\tilde{s}} & \bz{2} \ar[r] \ar@/^1em/[u]^-s & \ast
                             }
  $$
  where the arrows are homotopy fibrations and the section $\tilde{s}$ of the left vertical fibration exists because the composite $h \circ s$ is null-homotopic. Since
  $(\p{BO(n)}{2})_{\Q}$ is $\bz{2^m}$-acyclic, the map $g$ is a $\bz{2^m}$-equivalence. Furthermore, as  $CW_{\bz{2^m}}(\p{BSO(n)}{2})$ and $\bz{2}$ are $\bz{2^m}$-cellular and the left vertical fibration has a section, \cite[Theorem~4.7]{MR1408539} implies that $F'$ is  $\bz{2^m}$-cellular too. Therefore,
  $$
   CW_{\bz{2^m}}(\p{BO(n)}{2}) \rightarrow \p{BO(n)}{2} \rightarrow (\p{BO(n)}{2})_{\Q}
  $$
  is again a fibration for all $m \geq 1$.
 \end{example}

\color{black}{
We finish by giving a method that in in some cases allows to compute explicitly the cellular approximation of $p$-local compact groups that do $not$ arise from the fusion of honest Lie groups. We start with an easy lemma.

\begin{lemma}
\label{l:inclusion}

Let $(S,\F)$ and $(S,\F')$ $p$-local compact groups over the same $p$-Sylow subgroup, with $\F'<\F$. Assume that for a certain $m\geq 1$ we have $Cl_{\F'}(\bz{p^m})=S$. Then $Cl_{\F}(\bz{p^m})=S$.

\end{lemma}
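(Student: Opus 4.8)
The plan is to compare the two closure operators $Cl_{\F}(-)$ and $Cl_{\F'}(-)$ directly on the subgroup $\Omega_{p^m}S$, exploiting that $\F'$ has fewer morphisms than $\F$ while both live over the same $S$ (so that $\Omega_{p^m}S$, being defined purely group-theoretically, is the same subgroup in either setting). The first step is to record the elementary observation that every strongly $\F$-closed subgroup $K\leq S$ is automatically strongly $\F'$-closed: being strongly closed means being stable under every morphism of the ambient fusion system with source contained in $K$, and since $\Mor(\F')\subseteq \Mor(\F)$, stability under all $\F$-morphisms is a stronger requirement than stability under all $\F'$-morphisms.

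Next I would apply this to the subgroup $Cl_{\F}(\z{/p^m})=Cl_{\F}(\Omega_{p^m}S)$. By definition it is strongly $\F$-closed and contains $\Omega_{p^m}S$; by the previous step it is therefore a strongly $\F'$-closed subgroup of $S$ containing $\Omega_{p^m}S$. Minimality of $Cl_{\F'}(\Omega_{p^m}S)$ among strongly $\F'$-closed subgroups containing $\Omega_{p^m}S$ then yields the inclusion $Cl_{\F'}(\z{/p^m})\leq Cl_{\F}(\z{/p^m})$.

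Finally, under the hypothesis $Cl_{\F'}(\z{/p^m})=S$ this inclusion forces $S\leq Cl_{\F}(\z{/p^m})\leq S$, hence $Cl_{\F}(\z{/p^m})=S$, which is the claim. I do not anticipate any real obstacle: the only point that requires care is the bookkeeping that the relation ``$\F'<\F$'' means precisely ``same $p$-Sylow $S$ and $\Mor(\F')\subseteq\Mor(\F)$'', which is exactly what makes the containment of closure operators run in the direction needed.
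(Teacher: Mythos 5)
Your proof is correct and follows essentially the same route as the paper: the paper simply asserts the inclusion $Cl_{\F'}(\z{/p^m})\leq Cl_{\F}(\z{/p^m})$ as an immediate consequence of $\F'<\F$ and then squeezes $S\leq Cl_{\F}(\z{/p^m})\leq S$, exactly as you do. Your only addition is to spell out why that inclusion holds (every strongly $\F$-closed subgroup is strongly $\F'$-closed, then invoke minimality), which the paper leaves implicit.
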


\begin{proof}

As $\F'<\F$, $Cl_{\F'}(\bz{p^m})\leq Cl_{\F}(\bz{p^m})$. Then $S=Cl_{\F'}(\bz{p^m})\leq Cl_{\F}(\bz{p^m})\leq S$ and the statement follows.

\end{proof}

Our key result in this context will be the following:

\begin{prop}
\label{p:exotic}

Let $(S, \F(G))$ be the $p$-local compact group associated to a 1-connected simple compact Lie group $G$ such that $G\neq S^3$ if $p=2$.
Assume $p \mid |W_G|$. Let $(S,\F)$ another $p$-local compact group over the same $p$-Sylow such that $\F(G)\leq \F$, $B\F$ is 1-connected and $H_2(B\F,\mathbb{Z})$ is finite. Then for any $m\geq 1$, $CW_{\bz{p^m}}B\F$ is the homotopy fibre of the rationalization of $B\F$.

\end{prop}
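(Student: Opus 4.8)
The plan is to reduce the statement to the machinery of Section~\ref{section4}: I want to show that the Notbohm kernel of the Chach\'olski map for $\bz{p^m}$-cellularization of $B\F$ is the whole Sylow $S$, and then run the usual arithmetic-square argument. The one genuinely new point is that the mere inclusion $\F(G)\leq\F$ already forces $Cl_{\F}(\z{/p^m})=S$. First I would note that, since $G$ is $1$-connected simple with $p\mid |W_G|$ and $(G,p)\neq (S^3,2)$, the argument in the proof of Proposition~\ref{p:G simple} applies to $(S,\F(G))$: if $Cl_{\F(G)}(\z{/p^m})$ were a proper, hence finite, subgroup of $S$, the classification of strongly closed subgroups of simple $1$-connected compact Lie groups \cite[Proposition~4.3]{MR1286829} would force it to be central in $G$ (or in $SU(3)\leq G_2$ when $(G,p)=(G_2,3)$), contradicting the presence in $S$ of non-central order-$p$ elements, which is exactly the content of the hypothesis $(G,p)\neq(S^3,2)$. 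Hence $Cl_{\F(G)}(\z{/p^m})=S$ for all $m\geq 1$, and since $\F(G)\leq\F$ over the common Sylow $S$, Lemma~\ref{l:inclusion} gives $Cl_{\F}(\z{/p^m})=S$ for all $m\geq 1$.

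Next I would set up Chach\'olski's fibration. As $B\F$ is $p$-complete, $1$-connected, and $H_2(B\F;\z{})$ is finite, Proposition~\ref{p:Linp+H2pm} together with Remark~\ref{r:1-connected} shows $B\F$ is a $p$-complete $CW_{\bz{p^m}}$-good space for \emph{every} $m\geq 1$ (i.e.\ $n_{\F}=1$), so by Proposition~\ref{p:CW phi} there is, for each $m\geq1$, a homotopy fibre sequence
$$
 \p{CW_{\bz{p^m}}(B\F)}{p}\xrightarrow{\ \p{c}{p}\ } B\F \xrightarrow{\ \p{r}{p}\ } \p{(P_{\sbz{p^m}}C)}{p},
$$
with target $p$-complete and $\sbz{p^m}$-null. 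Because $\bz p$ is $\bz{p^m}$-cellular (Proposition~\ref{p:CW BZpinfty BPinfty}(b), since $\Omega_{p^m}(\z{/p})=\z{/p}$), every $\sbz{p^m}$-null space is $\sbz p$-null, so $\ker(\p{r}{p})$ is defined; the observation after Proposition~\ref{p:kerf strongly} gives $Cl_{\F}(\z{/p^m})\leq\ker(\p{r}{p})$, whence $\ker(\p{r}{p})=S$ by the previous paragraph, and Proposition~\ref{p:Dwyer p local} forces $\p{r}{p}\simeq\ast$. Proposition~\ref{p:CW phi} then gives that $\p{c}{p}$ is a homotopy equivalence, i.e.\ $\p{CW_{\bz{p^m}}(B\F)}{p}\simeq B\F$; this is the $\bz{p^m}$-analogue of Theorem~\ref{p:BS cell CW A Xp = Xp}.

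Finally I would conclude as in the proof of Theorem~\ref{t:CW Bzpm BG}. The space $CW_{\bz{p^m}}(B\F)$ is nilpotent (since $B\F$ is $1$-connected and $CW_{\bz{p^m}}$ preserves nilpotence, by the observation after Theorem~\ref{t:chacholski}); being $\bz{p^m}$-cellular it is $H\Q$-acyclic and $H\mathbb{F}_q$-acyclic for all $q\neq p$ by \cite[Lemma~2.8]{CASTFLOR}; and its $p$-completion is $B\F$ by the previous step. Sullivan's arithmetic square then identifies $CW_{\bz{p^m}}(B\F)$ with the homotopy fibre of $B\F\to (B\F)_{\Q}$, as asserted. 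I expect the main point requiring care is the bookkeeping that the hypotheses of Propositions~\ref{p:Linp+H2pm} and \ref{p:CW phi} are met --- in particular that $1$-connectedness of $B\F$ supplies $n_{\F}=1$, so that the conclusion holds for all $m\geq 1$ rather than only for large $m$ --- and the verification that the target of $\p{r}{p}$ is $\sbz p$-null so that the Notbohm kernel formalism applies; once these are in place the argument is a direct assembly of the results of Section~\ref{section4}.
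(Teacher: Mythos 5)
Your proposal is correct and follows essentially the same route as the paper's own proof: establish $Cl_{\F(G)}(\z{/p^m})=S$ via the argument of Proposition~\ref{p:G simple}, transfer this to $\F$ by Lemma~\ref{l:inclusion} to get $\ker(\p{r}{p})=S$, invoke Propositions~\ref{p:Linp+H2pm}, \ref{p:CW phi} and Remark~\ref{r:1-connected} to see the augmentation is a mod~$p$ equivalence for every $m\geq 1$, and finish with the Sullivan arithmetic square. Your write-up is in fact slightly more careful than the paper's (e.g.\ checking that the target of $\p{r}{p}$ is $\sbz{p}$-null so the Notbohm kernel is defined), but the underlying argument is identical.
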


\begin{proof}

The proof of Proposition \ref{p:G simple} shows that $Cl_{\F(G)}(\bz{p^m})=S$.
Hence, $Cl_{\F}(\bz{p^m})=S$, and in particular $\ker (\p{r}{p}) = S$.
By Proposition \ref{p:Linp+H2pm} and Remark \ref{r:1-connected}, $B\F$ is $CW_{\bz{p^m}}$-good. Then, by Proposition \ref{p:CW phi} the augmentation $CW_(\bz{p^m})B\F\rightarrow B\F$ is a mod $p$-equivalence. As $B\F$ is 1-connected and $(CW_{\bz{p^m}} (\Linp))_{\Q} \simeq \ast$ and $\p{CW_{\bz{p^m}} (\Linp)}{q} \simeq \ast$ for $q \neq p$, the result follows from a Sullivan arithmetic square,.

\end{proof}

Let us describe a concrete example of this situation.

 \begin{example}
  Consider the space $BDI(4)$ constructed by Dwyer-Wilkerson in \cite{MR1274096} which
  is the classifying space of a $2$-compact group. Its mod $2$ cohomology are the Dickson invariants of rank $4$, $H^{\ast}((\bz{2})^4;\mathbb{F}_2)^{GL_4 \mathbb{F}_2}$,
  and the $2$-adic integral cohomology, computed in \cite{OsseSuter}, is $$H^*(BDI(4);\mathbb Z^\wedge_2)\cong \mathbb Z^\wedge_2[v_8,v_{12},v_{28},\tau_{15}]/(2\tau_{15}).$$ In particular,
  $BDI(4)$ is $2$-connected, and $H_2(BDI(4);\mathbb Z)$ is trivial.
  There is a map of $2$-compact groups $BSpin(7) \rightarrow BDI(4)$, and both groups share the same maximal $2$-toral subgroup.

  In \cite{MR1943386}, Levi-Oliver construct for every odd prime $q$ a family of exotic $2$-local finite groups, which are given by the exotic Solomon fusion systems $Sol(q)$ over the $2$-Sylow $S$ of $Spin(q)$. In particular, $Sol(q)$ contains the fusion system $\F_{Spin_7(q)}(S)$ of $Spin_7(q)$, which is the centralizer of a central element in $S$. These authors also show that $BDI(4)$ is homotopy
  equivalent to the $2$-completion of the union of spaces $BSol(q^n)$, and this identifies $\F_{Spin(7)}(S)$ as a subsystem of $\F_{DI(4)}(S)$ over the same Sylow $2$-subgroup. Hence, by the previous result, we have a homotopy fibration:

  $$
   CW_{\bz{2^m}}(BDI(4)) \rightarrow BDI(4) \rightarrow (BDI(4))_{\Q}.
  $$

 \end{example}

A similar situation can be found with the $p$-fusion of $SU(p)$ inside the $p$-fusion of some of the Aguad\'e $p$-compact groups defined in \cite{Aguade}.

}

{\color{black}

 At this point, the main question that remain unanswered in our context, and which our methods are unable to solve, is the following:

 \begin{question}
  If $G$ is a compact Lie group (not necessarily connected nor simple), what can we say about the $\bz{p^m}$-cellularization of $\p{BG}{p}$ if $BS$ is not $\bz{p^m}$-cellular?
 \end{question}


\bibliographystyle{alpha}
\bibliography{bibliografia}

\bigskip
{\small
\begin{minipage}[t]{8 cm}
Nat\`{a}lia Castellana and Alberto Gavira-Romero\\ Departament de Matem\`atiques,\\
Universitat Aut\`onoma de Barcelona,\\ E-08193 Bellaterra, Spain
\\\textit{E-mail:}\texttt{\,natalia@mat.uab.es}, \\
\phantom{\textit{E-mail:}}\texttt{\,\,albertogavira@gmail.com}
\end{minipage}
}
\vspace{1cm}

{\small
\begin{minipage}[t]{8 cm}
Ram\'{o}n Flores\\ Departamento de Geometr\'{i}a y Topolog\'{i}a,\\
Universidad de Sevilla-IMUS,\\ E-41012 Sevilla, Spain
\\\textit{E-mail:}\texttt{\,ramonjflores@us.es} \\

\end{minipage}
}

}
\end{document}